\title{Real second-order freeness and the asymptotic real second-order freeness of several real matrix models}
\author{C.\ Emily I.\ Redelmeier\thanks{Research supported by a two-year Sophie Germain post-doctoral scholarship provided by the Fondation math\'{e}matique Jacques Hadarmard, held at the D\'{e}partement de Math\'{e}matiques, UMR 8628 Universit\'{e} Paris-Sud 11-CNRS, B\^{a}timent 425, Facult\'{e} des Sciences d'Orsay, Universit\'{e} Paris-Sud 11, F-91405 Orsay Cedex.  E-mail address: emily.redelmeier@math.u-psud.fr}}
\newtheorem{theorem}{Theorem}[section]
\newtheorem{lemma}[theorem]{Lemma}
\theoremstyle{remark}
\newtheorem{remark}[theorem]{Remark}
\newtheorem{example}[theorem]{Example}
\theoremstyle{definition}
\newtheorem{definition}[theorem]{Definition}
\begin{document}

\maketitle

\begin{abstract}
We introduce real second-order freeness in second-order noncommutative probability spaces.  We demonstrate that under this definition, three real models of random matrices, namely real Ginibre matrices, Gaussian orthogonal matrices, and real Wishart matrices, are asymptotically second-order free.  These ensembles do not satisfy the complex definition of second-order freeness satisfied by their complex analogues.  We use a combinatorial approach to the matrix calculations similar to the genus expansion for complex random matrices, but in which nonorientable surfaces appear, demonstrating the commonality between the real models and the distinction from their complex analogues, motivating this distinct definition.   In the real case we find, in addition to the terms appearing in the complex case corresponding to annular spoke diagrams, an extra set of terms corresponding to annular spoke diagrams in which the two circles of the annulus are oppositely oriented, and in which the matrix transpose appears.
\end{abstract}

\section{Introduction}

In a noncommutative probability space, freeness is an analogue of independence in a classical probability space.  Many important random matrix models, both real and complex, are asymptotically free; that is, as the size of the matrix becomes large, independent matrices satisfy freeness conditions on the expected values of the traces of their products \cite{MR1094052}.  No modification of the definition is required for real random matrices.

In addition to considering the asymptotic moments of random matrices, that is, the expected values of traces of products in the large matrix limit, it is also possible to consider the distributions or fluctuations around these expected values (central limit-type theorems as opposed to law of large numbers-type theorems).  Many important matrix models (including all of those considered in this paper) are asymptotically Gaussian, so the most important quantity of these fluctuations is the appropriately rescaled asymptotic covariance of traces.  Second-order probability spaces, that is, noncommutative probability spaces equipped with a bilinear function modelling this covariance, were introduced in \cite{MR2216446} and further studied in \cite{MR2294222, MR2302524}.  Also given are definitions for second-order freeness and asymptotic second-order freeness, and it is shown that several important complex matrix models satisfy this definition.  Second-order freeness then has the role for fluctuations that first-order freeness has for moments.  In particular, if random matrices \(A\) and \(B\) are asymptotically second-order free, the asymptotic fluctuations of \(A+B\) and \(AB\) can be calculated from the asymptotic moments and fluctuations of \(A\) and \(B\).

Asymptotic second-order freeness as defined in \cite{MR2216446} is not generally satisfied by real ensembles of random matrices.  If random matrices \(A_{k,N}\) and \(B_{l,N}\), \(k,l=1,\ldots,p\) are elements of the algebra generated by a model studied in this paper, then the relation satisfied instead is
\begin{multline*}
\lim_{N\rightarrow\infty}\mathrm{cov}\left(\mathrm{Tr}\left(\mathaccent"7017{A}_{1,N}\cdots\mathaccent"7017{A}_{p,N}\right),\mathrm{Tr}\left(\mathaccent"7017{B}_{1,N}\cdots\mathaccent"7017{B}_{p,N}\right)\right)
\\=\sum_{k=0}^{p-1}\prod_{i=1}^{p}\lim_{N\rightarrow\infty}\mathbb{E}\left(\mathrm{tr}\left(\mathaccent"7017{A}_{i,N}\mathaccent"7017{B}_{k-i,N}\right)\right)+\sum_{k=0}^{p-1}\prod_{i=0}^{p}\lim_{N\rightarrow\infty}\mathbb{E}\left(\mathrm{tr}\left(\mathaccent"7017{A}_{i,N}\mathaccent"7017{B}_{k+i,N}^{T}\right)\right)
\end{multline*}
(where the circle above the random matrix terms indicates that they have been centred: \(\mathaccent"7017{X}:=X-\mathbb{E}\left(\mathrm{tr}\left(X\right)\right)\), cyclically adjacent terms in each entry come from algebras generated by independent ensembles, and all indices are taken modulo \(p\)), while in the limit covariances of cyclically alternating terms with different numbers of terms vanish, as in the complex case.  (Below we will generally suppress the index \(N\).)  The first sum on the right-hand side appears in the definition of complex second-order freeness, and its summands correspond to ``spoke diagrams'' on an annulus, as in the first row of Figure~\ref{spoke} and described in \cite{MR2216446}.  The second sum does not appear in the complex case.  Its summands correspond to the spoke diagrams in the second row of Figure~\ref{spoke} in which the two circles of the annulus are oppositely oriented.

\begin{figure}
\centering
\begin{tabular}{ccc}
\scalebox{0.5}{\input{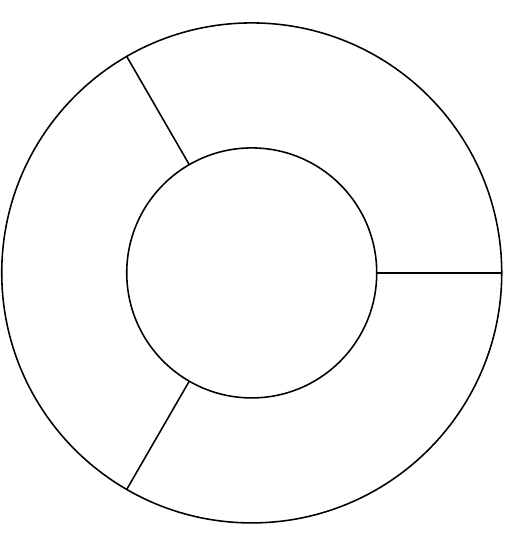_t}}&\scalebox{0.5}{\input{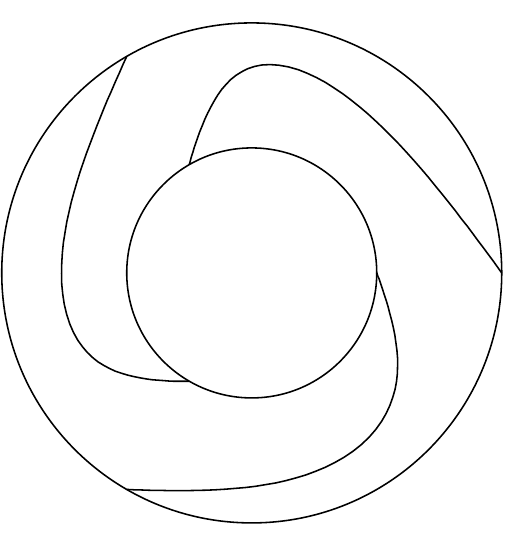_t}}&\scalebox{0.5}{\input{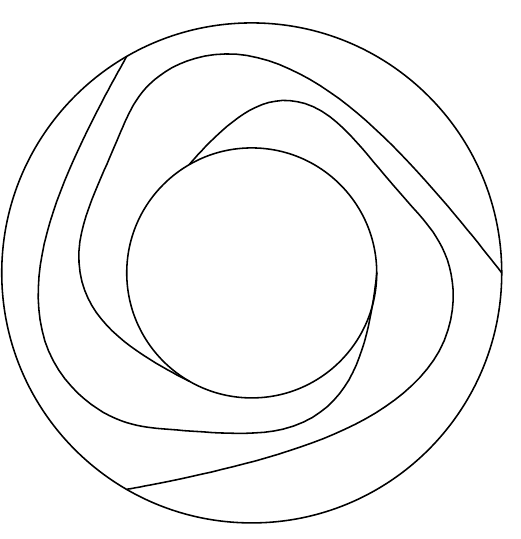_t}}\\
\scalebox{0.5}{\input{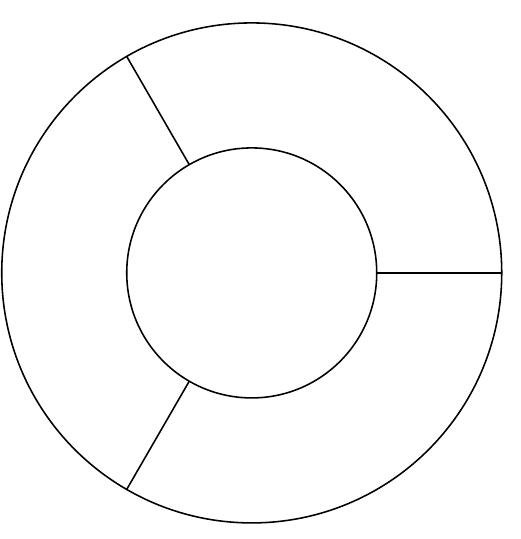_t}}&\scalebox{0.5}{\input{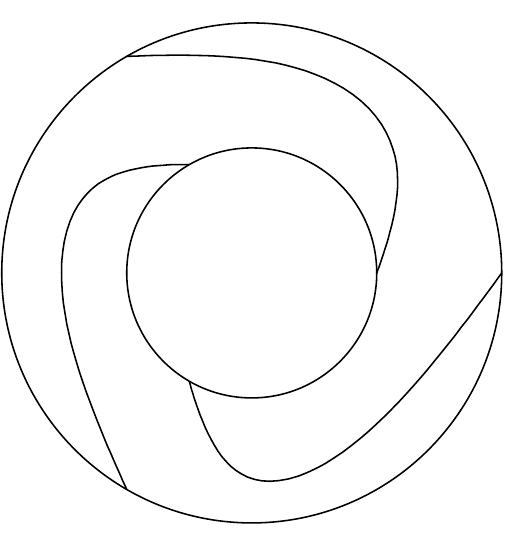_t}}&\scalebox{0.5}{\input{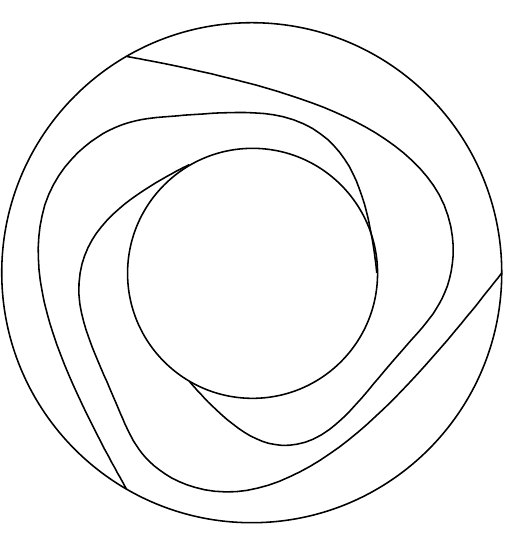_t}}
\end{tabular}
\caption{Spoke diagrams on cycles with three terms.}
\label{spoke}
\end{figure}

We thus propose a definition of real second-order freeness of subalgebras of an abstract second-order probability space \(\left(A,\varphi_{1},\varphi_{2}\right)\) equipped with a linear involution \(a\mapsto a^{t}\) reversing the order of multiplication:
\[\varphi_{2}\left(a_{1}\cdots a_{p},b_{1}\cdots b_{p}\right)=\sum_{k=0}^{p-1}\prod_{i=1}^{p}\varphi_{1}\left(a_{i}b_{k-i}\right)+\sum_{k=0}^{p-1}\prod_{i=1}^{p}\varphi_{1}\left(a_{i}b_{k+i}^{t}\right)\]
(where the \(a_{j}\) and \(b_{j}\) are centred and from cyclically alternating subalgebras, and again all indices are taken modulo \(p\)), and similar quantities vanish when the number of terms in each argument of \(\varphi_{2}\) are different.

In Section~\ref{definitions}, we present the notation and definitions we will be using, including the relevant existing definitions in free probability, asymptotic freeness and second-order probability spaces.  In Section~\ref{ensembles}, we define the three matrix models which we will be considering.  We find that each model can be put into a common form resembling a genus expansion, but in which terms corresponding to both orientable and nonorientable surfaces appear.  We will use this form in the remainder of the paper, in which we will derive results which are applicable to each of the three models or any other model satisfying the same conditions.  In Section~\ref{combinatorics} we present several combinatorial expressions which are exact for all matrix dimensions \(N\).  We derive an expression for expected values of products of traces of several independent matrices and for the cumulants of such traces, and we find a combinatorial characterization of the terms which contribute when the terms are centred.  In Section~\ref{asymptotics}, we consider asymptotic behaviour.  We characterize combinatorially the highest order terms and show that all of the matrix models have second-order limit distributions, meaning that not only the moments but also their fluctuations are well-defined asymptotically.  In Section~\ref{freeness}, we show that the matrix ensembles are asymptotically free under the usual definition.  Using the machinery of \cite{MR2052516}, we show that highest order terms correspond to a certain type of annular-noncrossing diagram (which we generalize to the real-matrix context, in which a larger class of diagrams are possible).  We show that the highest order terms then correspond to a larger set of spoke diagrams, (those for both relative orientations of the two circles), and we define asymptotic real second-order freeness as satisfying such a relation in Section~\ref{real} where we will collect definitions for the real case.

\section{Notation and definitions}
\label{definitions}

\subsection{Combinatorics}

We denote the interval of integers \(\left\{1,\ldots,n\right\}\) by \(\left[n\right]\), and for \(m\leq n\) the interval \(\left\{m,\ldots,n\right\}\) by \(\left[m,n\right]\).

If \(I\) is a set of integers, then we define \(-I:=\left\{-k:k\in I\right\}\).  We define \(\pm I:=I\cup\left(-I\right)\).

\begin{definition}
For a set \(I\), a {\em partition of \(I\)} is a set \(\left\{V_{1},\ldots,V_{k}\right\}\) of subsets of \(I\) called {\em blocks} which are disjoint, nonempty, and have union \(I\).  We denote the set of partitions of \(I\) by \({\cal P}\left(I\right)\), and the set of partitions of \(\left[n\right]\) by \({\cal P}\left(n\right)\).

We define a partial order \(\preceq\) on \({\cal P}\left(n\right)\) by letting \(\pi\preceq\rho\) if each block of \(\pi\) is contained in a block of \(\rho\).

For a set \(I\), we denote the largest partition \(1_{I}:=\left\{I\right\}\).  If \(I=\left[n\right]\), then we let \(1_{n}:=\left\{\left[n\right]\right\}\).

We denote the join of two partitions \(\pi,\rho\in{\cal P}\left(n\right)\) by \(\pi\vee\rho\), where the join \(\pi\vee\rho\in{\cal P}\left(n\right)\) is the smallest partition such that \(\pi,\rho\preceq\pi\vee\rho\).

A partition is a pairing if each of its blocks contains exactly two elements.  We will denote the set of pairings on a set \(I\) by \({\cal P}_{2}\left(I\right)\), and the set of pairings on \(\left[n\right]\) by \({\cal P}_{2}\left(n\right)\).  If \(n\) is odd, then \({\cal P}_{2}\left(n\right)\) is the empty set, and any sum over \({\cal P}_{2}\left(n\right)\) is zero.
\end{definition}

We will denote the group of permutations on a finite set \(I\) by \(S\left(I\right)\), and the group of permutations on \(\left[n\right]\) by \(S_{n}\).

We will often use cycle notation for permutations, in which the permutation maps each element to the next element in the cycle (and the last element in a cycle to the first).  When we multiply permutations, the rightmost acts first, then the next to the left, and so on.  If we conjugate a permutation \(\pi\) by another permutation \(\rho\), then in cycle notation, the resulting permutation \(\rho\pi\rho^{-1}\) has the same cycle structure as \(\pi\), but with each \(k\) replaced by \(\rho\left(k\right)\).

We will also call a permutation a pairing if each of its cycles contains exactly two elements.  We note that a partition in \({\cal P}_{2}\left(I\right)\) uniquely defines a pairing in \(S\left(I\right)\) and {\em vice versa}.

We denote the number of cycles (orbits) of a permutation \(\pi\) by \(\#\left(\pi\right)\).  We will use the same notation to denote the number of orbits of any subgroup of a permutation group.  The subgroup generated by permutations \(\pi_{1},\ldots,\pi_{k}\) will be denoted \(\langle\pi_{1},\ldots,\pi_{k}\rangle\).  We can see that \(\#\left(\pi\right)=\#\left(\pi^{-1}\right)\) and \(\#\left(\pi\right)=\#\left(\rho\pi\rho^{-1}\right)\).

A permutation \(\pi\in S\left(I\right)\) is considered to act trivially on a \(k\notin I\).  The number of orbits of a permutation depends on the implied domain of the permutation, which we will state explicitly if it is not clear from the context.  If \(\pi\in S\left(I\right)\) and \(\rho\in S\left(J\right)\) for \(I\) and \(J\) disjoint, then \(\#\left(\pi\rho\right)=\#\left(\pi\right)+\#\left(\rho\right)\).

We will often consider the orbits of a permutation as a partition, and will use the permutation to denote this partition.

If \(J,K\subseteq I\), we say that a permutation or subgroup of \(S\left(I\right)\) connects \(J\) and \(K\) if it has an orbit which contains at least one element of both \(J\) and \(K\).  We will say that a permutation or subgroup with orbits given by partition \(\pi\) connects the blocks of partition \(\rho\) if \(\pi\vee\rho=1_{I}\).

\begin{definition}
If \(J\subseteq I\) are sets and \(\pi\in S\left(I\right)\), we define the {\em permutation induced on \(J\) by \(\pi\)}, denoted \(\left.\pi\right|_{J}\in S\left(J\right)\), by letting \(\left.\pi\right|_{J}\left(k\right)=\pi^{m}\left(k\right)\), where \(m\) is the smallest positive integer such that \(\pi^{m}\left(k\right)\in J\).  (In cycle notation, this amounts to deleting all elements not in \(J\).)
\end{definition}
While it is not in general true that \(\left(\left.\pi\right|_{J}\right)\left(\left.\rho\right|_{J}\right)=\left.\pi\rho\right|_{J}\), it is true if at least one of \(\pi\) or \(\rho\) does not connect \(J\) and \(I\setminus J\).  We can see that \(\left(\left.\pi\right|_{J}\right)^{-1}=\left.\pi^{-1}\right|_{J}\) and if \(K\subseteq J\), \(\left.\left.\pi\right|_{J}\right|_{K}=\left.\pi\right|_{K}\).

Although an induced permutation \(\left.\pi\right|_{J}\) is defined only on \(J\), we will find it useful in Lemma~\ref{centred} to define \(\left.\pi\right|_{J}\) for all \(k\in I\) in the same manner: \(\left.\pi\right|_{J}\left(k\right)=\pi^{m}\left(k\right)\) where \(m\) is the smallest positive integer such that \(\pi^{m}\left(k\right)\in J\) even if \(k\notin J\).

Throughout, we will let \(\delta:k\mapsto -k\).

\begin{definition}
For a set of nonzero integers \(I\) we call a permutation \(\pi\in S\left(\pm I\right)\) a {\em premap} if \(\pi\left(k\right)=-\pi^{-1}\left(-k\right)\) and no cycle contains both \(k\) and \(-k\) for all \(k\in\pm I\).  We denote the set of premaps on \(\pm I\) by \(PM\left(\pm I\right)\).
\end{definition}

We note that the first condition is equivalent to \(\delta\pi\delta=\pi^{-1}\); and given the first condition, to show the second, it is sufficient to show that there is no \(k\) such that \(\pi\left(k\right)=-k\) (since given a \(k\) with \(\pi^{2m}\left(k\right)=-k\), we must then have \(\pi^{m}\left(k\right)=\pi^{-m}\left(-k\right)=-\pi^{m}\left(k\right)\), which is impossible, and given a \(k\) with \(\pi^{2m+1}\left(k\right)=-k\), we must have \(\pi\pi^{m}\left(k\right)=\pi^{-m}\left(-k\right)=-\pi^{m}\left(k\right)\), which is disallowed).

\begin{remark}
\label{premaps}
The permutation in the above definition is only one component of a premap as defined elsewhere, such as in \cite{MR746795}, in which such permutations are used to describe surfaced hypergraphs.  (Here, a hypergraph is a graph with hyperedges, that is, edges which may connect any positive integer number of vertices, rather than just two, and can be interpreted as vertices which must alternate with the other vertices.)  In a surfaced hypergraph, not only vertices and faces but also hyperedges must have a cyclic order on the graph elements they are connected to.  A premap is the appropriate object to describe the faces, hyperedges, or vertices of an unoriented surface: the conditions ensure the consistency of the description on an orientable two-sheeted covering space (see, for example, \cite{MR1867354}, pages 234--235, for the construction of this two-sheeted cover).  See \cite{MR746795, MR2036721, MR0404045, MR2052516, MR2851244} for more information on how surfaced hypergraphs can be represented as sets of permutations.  This interpretation is often useful for understanding subsequent calculations, but is not necessary to the proofs.  See Remark~\ref{hypergraphs}.
\end{remark}

\begin{definition}
We call a cycle of a permutation \(\pi\in S\left(\pm I\right)\) {\em particular} if its element with smallest absolute value is positive, after \cite{MR2132270}.  We denote by \(\pi/2\) the set of all particular cycles of a premap \(\pi\).  We will use \(\pi/2\) to denote both the set of all elements appearing in particular cycles of \(\pi\) and the permutation on that set by those cycles.
\end{definition}

If \(\pi\in S\left(I\right)\) where \(I\) does not contain both \(k\) and \(-k\) for any \(k\), we denote by \(\pi_{+}\) the permutation \(\pi\) considered as a permutation on \(\pm I\) (where it acts trivially on any element not in \(I\)).  We define \(\pi_{-}:=\delta\pi_{+}\delta\).  Then \(\pi_{+}\pi_{-}^{-1}\) is a premap with \(\#\left(\pi_{+}\pi_{-}^{-1}\right)=2\#\left(\pi\right)\).

We note that the inverse of a premap is a premap.  We also note that the permutation \(\gamma_{-}^{-1}\pi\gamma_{+}\) is a premap:
\begin{lemma}
Let \(\gamma\in S\left(I\right)\) for some set \(I\) that does not contain both \(k\) and \(-k\) for any \(k\), and let \(\pi\in PM\left(\pm I\right)\).  Then \(\gamma_{-}^{-1}\pi\gamma_{+}\in PM\left(\pm I\right)\).
\end{lemma}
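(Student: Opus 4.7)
The plan is to verify the two defining conditions of a premap directly for $\sigma := \gamma_{-}^{-1}\pi\gamma_{+}$, exploiting the identities $\delta\gamma_{+}\delta = \gamma_{-}$ and $\delta\gamma_{-}\delta = \gamma_{+}$ (which are immediate from the definition $\pi_{-}:=\delta\pi_{+}\delta$) together with the known fact $\delta\pi\delta = \pi^{-1}$ for the premap $\pi$.

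First I would establish the conjugation condition $\delta\sigma\delta = \sigma^{-1}$. This is a one-line manipulation: insert $\delta\delta = \mathrm{id}$ between each factor in $\delta(\gamma_{-}^{-1}\pi\gamma_{+})\delta$, then apply the three identities above to obtain $\gamma_{+}^{-1}\pi^{-1}\gamma_{-} = (\gamma_{-}^{-1}\pi\gamma_{+})^{-1}$. No subtlety here.

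Next I would show that no cycle of $\sigma$ contains both $k$ and $-k$. Using the reduction already established in the paragraph preceding the lemma (that once the conjugation condition holds, it suffices to exhibit no $k$ with $\sigma(k) = -k$), I assume for contradiction that $\sigma(k) = -k$ for some $k\in\pm I$. Rewriting this as $\pi(\gamma_{+}(k)) = \gamma_{-}(-k)$ and using $\gamma_{-}(-k) = \delta\gamma_{+}\delta(-k) = -\gamma_{+}(k)$, I obtain $\pi(j) = -j$ for $j := \gamma_{+}(k)$, contradicting the premap property of $\pi$. Note that this step is insensitive to whether $k\in I$ or $k\in -I$, since in the latter case $\gamma_{+}$ acts trivially and $j = k$.

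The main (and only modest) obstacle is ensuring that $\gamma_{+}$ and $\gamma_{-}$ interact correctly with $\delta$ on the whole of $\pm I$, given that each is nontrivial only on half of this set; once one observes that $\delta$ conjugates $\gamma_{+}$ to $\gamma_{-}$ as permutations of $\pm I$, both steps are essentially bookkeeping. The argument parallels, and in fact reuses, the equivalence noted immediately after the definition of a premap.
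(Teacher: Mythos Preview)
Your proposal is correct and follows essentially the same route as the paper: the paper also inserts $\delta\delta$ between the factors to verify $\delta\sigma\delta=\sigma^{-1}$, and then rules out $\sigma(k)=-k$ by rewriting it as $\pi(\gamma_{+}(k))=\gamma_{-}(-k)=-\gamma_{+}(k)$ and invoking the premap property of $\pi$. Your additional remarks about the case $k\in -I$ and the use of the reduction stated after the premap definition are accurate elaborations of the same argument.
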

\begin{proof}
Firstly, \(\delta\gamma_{-}^{-1}\pi\gamma_{+}\delta=\delta\gamma_{-}^{-1}\delta\delta\pi\delta\delta\gamma_{+}\delta=\gamma_{+}^{-1}\pi^{-1}\gamma_{-}\).

Secondly, if, for any \(k\), \(\gamma_{-}^{-1}\pi\gamma_{+}\left(k\right)=-k\), then
\[\pi\left(\gamma_{+}\left(k\right)\right)=\gamma_{-}\left(-k\right)=-\gamma_{+}\left(k\right)\textrm{.}\]
But since \(\pi\) is a premap, this cannot be the case.
\end{proof}

\subsection{Random variables and matrices}

\begin{definition}
We define the {\em classical cumulants} \(k_{1},k_{2},\ldots\) such that the \(n\)th cumulant \(k_{n}\) is an \(n\)-linear function on random variables satisfying the {\em moment-cumulant formula}, by which they are uniquely defined:
\begin{equation}
\label{moment-cumulant}
\mathbb{E}\left(X_{1}\cdots X_{n}\right)=\sum_{\pi\in{\cal P}\left(n\right)}\prod_{\left\{i_{1},\ldots,i_{m}\right\}\in\pi}k_{m}\left(X_{i_{1}},\ldots,X_{i_{m}}\right)\textrm{.}
\end{equation}
\end{definition}
In particular, the second cumulant is the covariance:
\[k_{2}\left(X,Y\right)=\mathbb{E}\left(XY\right)-\mathbb{E}\left(X\right)\mathbb{E}\left(Y\right)\textrm{.}\]
We also note that if any entry \(X_{i}\) of a cumulant \(k_{n}\left(X_{1},\ldots,X_{n}\right)\) with \(n>1\) is a constant random variable, then the cumulant vanishes.  (Briefly, we have
\begin{eqnarray*}
\mathbb{E}\left(X_{1}\cdots X_{n}\right)&=&k_{1}\left(X_{i}\right)\mathbb{E}\left(X_{1}\cdots X_{i-1}X_{i+1}\cdots X_{n}\right)\\
&=&k_{1}\left(X_{i}\right)\sum_{\pi\in{\cal P}\left(\left[n\right]\setminus\left\{i\right\}\right)}\prod_{\left\{i_{1},\ldots,i_{m}\right\}\in\pi}k_{m}\left(X_{i_{1}},\ldots,X_{i_{m}}\right)\textrm{;}
\end{eqnarray*}
that is, the moment is the sum over terms in which \(i\) is in its own block.  Inductively, we assume that any term in the moment-cumulant formula in which \(X_{i}\) is contained in a \(k_{m}\) with \(1<m<n\) vanishes, so \(k_{n}\left(X_{1},\ldots,X_{n}\right)\) also vanishes.)

We denote the usual trace by \(\mathrm{Tr}\left(X\right):=\sum_{i=1}^{N}X_{ii}\), and the normalized trace by \(\mathrm{tr}:=\frac{1}{N}\mathrm{Tr}\), where \(N\) is the size of the matrix.

When we wish to move the subscript of a matrix \(X_{k}\) to its superscript (such as when we have indices in the subscript), we will put it in brackets, and denote \(X_{k}\) by \(X^{\left(k\right)}\).  We will often find it convenient to denote a matrix \(X\) by \(X^{\left(1\right)}\) and its transpose by \(X^{\left(-1\right)}\).  If we want to combine these notations, we will denote \(X_{k}\) by \(X^{\left(k\right)}\) and \(X_{k}^{T}\) by \(X^{\left(-k\right)}\).

Let \(I=\left\{c_{1},\ldots,c_{m}\right\}\subseteq\pm\left[n\right]\), let \(m_{1}+\cdots+m_{k}=m\), and let \(\pi=\left(c_{1},\ldots,c_{m_{1}}\right)\allowbreak\cdots\allowbreak\left(c_{m_{1}+\cdots+m_{k-1}+1},\ldots,c_{m}\right)\in S\left(I\right)\).  We denote the trace along the cycles of \(\pi\):
\begin{multline*}
\mathrm{Tr}_{\pi}\left(X_{1},\ldots,X_{n}\right)\\:=\mathrm{Tr}\left(X^{\left(c_{1}\right)}\cdots X^{\left(c_{m_{1}}\right)}\right)\cdots\mathrm{Tr}\left(X^{\left(c_{m_{1}+\cdots+m_{k-1}+1}\right)}\cdots X^{\left(c_{m}\right)}\right)\textrm{.}
\end{multline*}
If the domain of \(\pi\) is restricted to \(\pm J\), where \(J\) is a finite set of positive integers \(J=\left\{j_{1},\ldots,j_{m}\right\}\) with \(j_{1}<\ldots<j_{m}\), then we may list in the arguments of the trace only the elements of \(J\) (in order): \(\mathrm{Tr}_{\pi}\left(X_{j_{1}},\ldots,X_{j_{m}}\right)\).  In each case, the domain will be specified if it is not clear from context.  The normalized trace along a permutation is defined analogously.

\subsection{Noncommutative probability spaces and freeness}

\begin{definition}
A {\em noncommutative probability space \(\left(A, \varphi_{1}\right)\)} is an unital algebra \(A\) equipped with a tracial functional \(\varphi_{1}\) (called an expectation) such that \(\varphi_{1}\left(1_{A}\right)=1\).  We will call elements of \(A\) {\em (noncommutative) random variables}.
\end{definition}
When \(X\) is a random matrix, we will let \(\varphi_{1}\left(X\right)=\mathbb{E}\left(\mathrm{tr}\left(X\right)\right)\).

\begin{definition}
We will say that a random variable \(a\in A\) is {\em centred} if \(\varphi_{1}\left(a\right)=0\).  We will denote the centred random variable \(\mathaccent"7017{a}:=a-\varphi_{1}\left(a\right)\).  In particular, if \(X\) is a random matrix, we will denote the centred random matrix by \(\mathaccent"7017{X}:=X-\mathbb{E}\left(\mathrm{tr}\left(X\right)\right)\).
\end{definition}

\begin{definition}
We say that a product of terms \(a_{1},\ldots,a_{p}\) taken from subalgebras \(A_{1},\ldots,A_{n}\subseteq A\) are {\em alternating} if \(a_{i}\in A_{k_{i}}\) and \(k_{1}\neq k_{2}\neq\ldots\neq k_{p}\), and they are {\em cyclically alternating} if, in addition, \(k_{p}\neq k_{1}\).  Similarly, we say that a word \(w:\left[p\right]\rightarrow\left[C\right]\) is alternating if \(w\left(1\right)\neq w\left(2\right)\neq\cdots\neq w\left(p\right)\), and cyclically alternating if in addition \(w\left(p\right)\neq w\left(1\right)\).  
\end{definition}

\begin{definition}
Let \(A_{1},\ldots,A_{n}\subseteq A\) be subalgebras of noncommutative probability space \(A\).  We say that \(A_{1},\ldots,A_{n}\) are {\em free} if
\[\varphi_{1}\left(a_{1},\ldots,a_{p}\right)=0\]
whenever the \(a_{i}\) are centred and alternating.
\end{definition}

Let \(\left(\Omega,\Sigma,\mathbb{P}\right)\) be a probability space.

\begin{definition}
For each colour \(c\in I\) for some index set \(I\) and for \(N=1,2,\ldots\), let \(\left\{X_{c}^{\left(\lambda\right)}:\Omega\rightarrow M_{N\times N}\left(\mathbb{F}\right)\right\}_{\lambda\in\Lambda_{c}}\) be a family of matrices.  (Here \(\mathbb{F}\) may be either \(\mathbb{R}\) or \(\mathbb{C}\).)  We say that the families are {\em asymptotically free} if, for any integer \(p>0\), alternating word \(w:\left[p\right]\rightarrow I\), and \(A_{i}\) in the algebra generated by the \(X_{w\left(i\right)}^{\left(\lambda\right)}\) for \(1\leq i\leq p\), we have
\[\lim_{N\rightarrow\infty}\mathbb{E}\left(\mathrm{tr}\left(\mathaccent"7017{A}_{1}\cdots\mathaccent"7017{A}_{p}\right)\right)=0\textrm{.}\]
\end{definition}

\begin{definition}
A {\em second-order probability space \(\left(A,\varphi_{1},\varphi_{2}\right)\)} is a noncommutative probability space \(\left(A,\varphi_{1}\right)\) equipped with a bilinear functional \(\varphi_{2}\) which is tracial in each argument and such that \(\varphi_{2}\left(1_{A},a\right)=\varphi_{2}\left(a,1_{A}\right)=0\) for all \(a\in A\).
\end{definition}
If \(X,Y\) are random matrices, we will let \(\varphi_{2}\left(X,Y\right)=\mathbb{E}\left(\mathrm{Tr}\left(X\right),\mathrm{Tr}\left(Y\right)\right)\).

\begin{definition}
Subalgebras \(A_{1},\ldots,A_{n}\) of a second-order noncommutative probability space \(\left(A,\varphi_{1},\varphi_{2}\right)\) are {\em complex second-order free} if they are free and
\[\varphi_{2}\left(a_{1}\cdots a_{p},b_{1}\cdots b_{p}\right)=\sum_{k=0}^{p-1}\prod_{i=1}^{p}\varphi_{1}\left(a_{i}b_{k-i}\right)\]
when the \(a_{1},\ldots,a_{p}\) and the \(b_{1},\ldots,b_{p}\) are centred and cyclically alternating, and
\[\varphi_{2}\left(a_{1}\cdots a_{p},b_{1}\cdots b_{q}\right)=0\]
when \(p\neq q\) and the \(a_{1},\ldots,a_{p}\) and the \(b_{1},\ldots,b_{q}\) are centred and either cyclically alternating or consist of a single term.

Here and elsewhere we take indices modulo the range over which they are defined.
\end{definition}

\begin{definition}
We say that random matrices \(\left\{X_{\lambda}:\Omega\rightarrow M_{N\times N}\left(\mathbb{F}\right)\right\}_{\lambda\in\Lambda}\) have a {\em second-order limit distribution} if there exists a second-order probability space \(\left(A,\varphi_{1},\varphi_{2}\right)\) with \(x_{\lambda}\in A\), \(\lambda\in\Lambda\), such that for \(i=1,2,\ldots\), any nonnegative integers \(n_{1},n_{2},\ldots\) and any polynomials \(p_{i}\) in \(n_{i}\) noncommuting variables (\(i=1,2,\ldots\)), and any \(\lambda_{i,1},\ldots,\lambda_{i,n}\in\Lambda\), we have
\[\lim_{N\rightarrow\infty}k_{1}\left(\mathrm{tr}\left(p_{1}\left(X_{\lambda_{1,1}},\ldots,X_{\lambda_{1,n_{1}}}\right)\right)\right)=\varphi_{1}\left(p_{1}\left(x_{\lambda_{1,1}},\ldots,x_{\lambda_{1,n_{1}}}\right)\right)\textrm{,}\]
\begin{multline*}
\lim_{N\rightarrow\infty}k_{2}\left(\mathrm{Tr}\left(p_{1}\left(X_{\lambda_{1,1}},\ldots,X_{\lambda_{1,n_{1}}}\right)\right),\mathrm{Tr}\left(p_{2}\left(X_{\lambda_{2,1}},\ldots,X_{\lambda_{2,n_{2}}}\right)\right)\right)\\=\varphi_{2}\left(p_{1}\left(x_{\lambda_{1,1}},\ldots,x_{\lambda_{1,n_{1}}}\right),p_{2}\left(x_{\lambda_{2,1}},\ldots,x_{\lambda_{2,n_{2}}}\right)\right)\textrm{,}
\end{multline*}
and for all \(r\geq 3\),
\[\lim_{N\rightarrow\infty}k_{r}\left(\mathrm{Tr}\left(p_{1}\left(X_{\lambda_{1,1}},\ldots,X_{\lambda_{1,n_{1}}}\right)\right),\ldots,\mathrm{Tr}\left(p_{r}\left(X_{\lambda_{r,1}},\ldots,X_{\lambda_{r,n_{r}}}\right)\right)\right)=0\textrm{.}\]

In addition, if there are \(\lambda\in\Lambda\) with \(-\lambda\in\Lambda\), then we require the existence of an involution \(x\mapsto x^{t}\) on \(A\) reversing the order of multiplication such that \(x_{-\lambda}=x_{\lambda}^{t}\) for any such \(\lambda\).
\end{definition}

\begin{definition}
For each colour \(c\in I\), let \(\left\{X^{\left(\lambda\right)}_{c}:\Omega\rightarrow M_{N\times N}\left(\mathbb{F}\right)\right\}_{\lambda\in\Lambda_{c}}\) be an ensemble of random matrices.  Let \(v:\left[p\right]\rightarrow I\) and \(w:\left[q\right]\rightarrow I\) be words in the colours.  Let \(A_{k}\) be a matrix in the algebra generated by the \(X_{v\left(k\right)}^{\left(\lambda\right)}\) and let \(B_{k}\) be a matrix in the algebra generated by the \(X_{w\left(k\right)}^{\left(\lambda\right)}\).  Then we say that the ensembles are {\em asymptotically complex second-order free} if they are free, have a second-order limit distribution, and the algebras generated by the elements of the second-order limit distribution are second-order free; that is, for \(v\) and \(w\) cyclically alternating and \(p=q\geq 2\),
\begin{multline*}
\lim_{N\rightarrow\infty}k_{2}\left(\mathrm{Tr}\left(\mathaccent"7017{A}_{1}\cdots\mathaccent"7017{A}_{p}\right),\mathrm{Tr}\left(\mathaccent"7017{B}_{1}\cdots\mathaccent"7017{B}_{p}\right)\right)\\=\sum_{k=0}^{p-1}\prod_{i=1}^{p}\lim_{N\rightarrow\infty}\left(\mathbb{E}\left(\mathrm{tr}\left(A_{i}B_{k-i}\right)\right)-\mathbb{E}\left(\mathrm{tr}\left(A_{i}\right)\right)\mathbb{E}\left(\mathrm{tr}\left(B_{k-i}\right)\right)\right)
\end{multline*}
and whenever \(p\neq q\) and \(v\) and \(w\) are either cyclically alternating or have length \(1\),
\[\lim_{N\rightarrow\infty}k_{2}\left(\mathrm{Tr}\left(\mathaccent"7017{A}_{1}\cdots\mathaccent"7017{A}_{p}\right),\mathrm{Tr}\left(\mathaccent"7017{B}_{1}\cdots\mathaccent"7017{B}_{q}\right)\right)=0\textrm{.}\]
\end{definition}

We collect the analogous definitions for real second-order probability in Section~\ref{real}.

\section{The matrix models}
\label{ensembles}

In this section, we present formulas for the expected values of products of traces of matrices from the three models we are considering.  Quantities involving several independent matrices can be calculated using Lemma~\ref{colour}.

We will use several lemmas to derive the formulas in this section, which we state here.  The following is a folklore result which can be proven by direct calculation:
\begin{lemma}
\label{trace}
Let \(\pi\in S\left(\left\{n_{1},\ldots,n_{m}\right\}\right)\), \(n_{1}<\cdots<n_{m}<n\), and let \(X_{n_{k}}\) be an \(N_{n_{k}}\times N_{\pi\left(n_{k}\right)}\) matrix for each \(k\).  Then
\[\mathrm{Tr}_{\pi}\left(X_{n_{1}},\ldots,X_{n_{m}}\right)=\sum_{\substack{1\leq i_{n_{k}}\leq N_{n_{k}}\\k=1,\ldots,m}}\prod_{k=1}^{m}X^{\left(n_{k}\right)}_{i_{n_{k}}i_{\pi\left(n_{k}\right)}}\textrm{.}\]
\end{lemma}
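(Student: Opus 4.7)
The plan is to unwind the definition of $\mathrm{Tr}_{\pi}$ and then apply the standard index expansion of an ordinary trace along each cycle of $\pi$. Concretely, write $\pi$ as a product of disjoint cycles $C_{1}\cdots C_{r}$ where $C_{j}=\left(c_{j,1},\ldots,c_{j,\ell_{j}}\right)$, so that
\[\mathrm{Tr}_{\pi}\left(X_{n_{1}},\ldots,X_{n_{m}}\right)=\prod_{j=1}^{r}\mathrm{Tr}\left(X^{\left(c_{j,1}\right)}\cdots X^{\left(c_{j,\ell_{j}}\right)}\right).\]
Before expanding, I would record that the matrix products inside each trace are well-defined: since $X^{\left(c_{j,s}\right)}$ is $N_{c_{j,s}}\times N_{\pi\left(c_{j,s}\right)}$ and $\pi\left(c_{j,s}\right)=c_{j,s+1}$ (with the cyclic convention $c_{j,\ell_{j}+1}=c_{j,1}$), the column dimension of each factor matches the row dimension of the next, and the overall product is a square $N_{c_{j,1}}\times N_{c_{j,1}}$ matrix.

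Next, for each cycle I would apply the elementary formula
\[\mathrm{Tr}\left(X^{\left(c_{j,1}\right)}\cdots X^{\left(c_{j,\ell_{j}}\right)}\right)=\sum_{i_{c_{j,1}},\ldots,i_{c_{j,\ell_{j}}}}X^{\left(c_{j,1}\right)}_{i_{c_{j,1}}i_{c_{j,2}}}X^{\left(c_{j,2}\right)}_{i_{c_{j,2}}i_{c_{j,3}}}\cdots X^{\left(c_{j,\ell_{j}}\right)}_{i_{c_{j,\ell_{j}}}i_{c_{j,1}}},\]
each summation variable $i_{c_{j,s}}$ running from $1$ to $N_{c_{j,s}}$. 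The key relabelling is $i_{c_{j,s+1}}=i_{\pi\left(c_{j,s}\right)}$, which turns each summand into $\prod_{s}X^{\left(c_{j,s}\right)}_{i_{c_{j,s}}i_{\pi\left(c_{j,s}\right)}}$, so the formula inside each trace is already of the desired shape, indexed by the elements of the cycle $C_{j}$.

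Finally I would take the product over $j=1,\ldots,r$. Since the $r$ cycles partition $\left\{n_{1},\ldots,n_{m}\right\}$, the index variables appearing in distinct factors are disjoint and range independently, so the product of the $r$ sums collapses to a single sum over all families $\left(i_{n_{k}}\right)_{k=1}^{m}$ with $1\leq i_{n_{k}}\leq N_{n_{k}}$, and the product of integrand factors becomes $\prod_{k=1}^{m}X^{\left(n_{k}\right)}_{i_{n_{k}}i_{\pi\left(n_{k}\right)}}$, which is exactly the right-hand side. There is no real obstacle; the only thing to be careful about is the bookkeeping that identifies the per-cycle summation index $i_{c_{j,s+1}}$ with the global label $i_{\pi\left(c_{j,s}\right)}$, so that separate cycle sums assemble into one unified sum indexed by the whole domain of $\pi$.
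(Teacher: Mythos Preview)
Your proof is correct and is exactly the direct calculation the paper has in mind; the paper itself does not spell out a proof, merely noting that the lemma is a folklore result provable by direct calculation.
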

If \(N_{k}=N\) for all \(k\in I\), we can interpret this as a sum over all functions \(i:\left\{n_{1},\ldots,n_{m}\right\}\rightarrow\left[N\right]\) with \(k\mapsto i_{k}\).

The following lemma is known as the Wick Formula.  A proof can be found in \cite{MR2036721}, p. 164.

\begin{lemma}[Wick]
\label{Wick}
Let \(\left\{f_{\lambda}\right\}_{\lambda\in\Lambda}\) be components of a real Gaussian family of random variables for some index set \(\Lambda\).  Then, for \(\lambda_{1},\ldots,\lambda_{n}\in\Lambda\),
\[\mathbb{E}\left(f_{\lambda_{1}}\cdots f_{\lambda_{n}}\right)=\sum_{\pi\in{\cal P}_{2}\left(n\right)}\prod_{\left\{k,l\right\}\in\pi}\mathbb{E}\left(f_{\lambda_{k}}f_{\lambda_{l}}\right)\textrm{.}\]
\end{lemma}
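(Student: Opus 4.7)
The plan is to deduce the Wick formula from the classical moment-cumulant formula (\ref{moment-cumulant}) combined with the characterization of Gaussian distributions by their cumulants. First I would observe that the statement as written is implicitly for a centered family: if some $f_\lambda$ had nonzero mean, then for odd $n$ the left side could be nonzero while $\mathcal{P}_{2}(n)$ is empty. I would therefore begin by assuming $\mathbb{E}(f_\lambda)=0$ for every $\lambda\in\Lambda$, so that the covariances $\mathbb{E}(f_{\lambda_k}f_{\lambda_l})$ coincide with the second classical cumulants $k_2(f_{\lambda_k},f_{\lambda_l})$.

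The crucial input is that the classical cumulants $k_m(f_{\lambda_1},\ldots,f_{\lambda_m})$ of a centered real Gaussian family vanish whenever $m\neq 2$. I would establish this by computing the joint characteristic function, which for a multivariate centered Gaussian equals
\[\mathbb{E}\Bigl(\exp\Bigl(i\sum_{j}t_{j}f_{\lambda_{j}}\Bigr)\Bigr)=\exp\Bigl(-\tfrac{1}{2}\sum_{j,k}t_{j}t_{k}\,\mathbb{E}(f_{\lambda_{j}}f_{\lambda_{k}})\Bigr)\textrm{.}\]
Its logarithm is a pure quadratic form in the $t_{j}$, and since joint cumulants are recovered as coefficients in the Taylor expansion of the cumulant-generating function at the origin, all cumulants of order one and of orders three and higher vanish, leaving only the quadratic coefficients $k_2(f_{\lambda_j},f_{\lambda_k})=\mathbb{E}(f_{\lambda_j}f_{\lambda_k})$.

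Substituting this into the moment-cumulant formula (\ref{moment-cumulant}) with $X_i=f_{\lambda_i}$, every partition $\pi\in\mathcal{P}(n)$ that contains a block of size one or of size at least three contributes a zero factor. Thus the sum collapses to $\pi\in\mathcal{P}_2(n)$, and each surviving term is a product of covariances indexed by the pairs of $\pi$, which is exactly the Wick formula; in particular, when $n$ is odd both sides are zero. The main obstacle is the vanishing of higher Gaussian cumulants; although the characteristic-function argument is short, one could alternatively avoid the cumulant machinery entirely and prove the formula directly by induction on $n$ via the Gaussian integration-by-parts identity $\mathbb{E}(f_{\lambda_1}g(f_{\lambda_2},\ldots,f_{\lambda_n}))=\sum_{j=2}^{n}\mathbb{E}(f_{\lambda_1}f_{\lambda_j})\,\mathbb{E}(\partial_{j}g)$, where each derivative term corresponds to choosing the pairing partner of $\lambda_1$ and the inductive hypothesis handles the remaining $n-2$ factors.
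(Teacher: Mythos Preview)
Your argument is correct. The paper itself does not supply a proof of this lemma at all: it simply cites an external reference (\cite{MR2036721}, p.~164) and moves on. So there is nothing to compare your approach against in the paper proper.

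Your route via the moment--cumulant formula~(\ref{moment-cumulant}) together with the vanishing of all cumulants of order $\neq 2$ for a centered Gaussian family is the standard one and works exactly as you describe. Your remark that the statement is implicitly for a centered family is well taken; in the paper the lemma is only ever applied to the entries $f_{ij}$ of the Gaussian matrices, which are explicitly $N(0,1)$, so the centering hypothesis is satisfied in every use. The alternative inductive proof via Gaussian integration by parts that you sketch at the end is also valid and is in fact closer in spirit to how the result is often presented in the random-matrix literature.
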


We will find the following expression useful.  For \(\varepsilon:\left[n\right]\rightarrow\left\{1,-1\right\}\), we define \(\varepsilon\) on \(-\left[n\right]\) by \(\varepsilon\left(-k\right)=\varepsilon\left(k\right)\), and we define
\[\delta_{\varepsilon}:k\mapsto\varepsilon\left(k\right)k\textrm{.}\]
\begin{lemma}
\label{lemma: pairing}
The cycles of \(\delta_{\varepsilon}\pi\delta\pi\delta_{\varepsilon}\) are of the form \(\left(k,-\varepsilon\left(k\right)\varepsilon\left(l\right)l\right)\) and \(\left(-k,\varepsilon\left(k\right)\varepsilon\left(l\right)l\right)\), where \(\left(k,l\right)\) is a cycle of \(\pi\).  It is thus a premap.
\end{lemma}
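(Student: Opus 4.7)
The plan is a direct computation of the cycle structure followed by a short check of the two defining properties of a premap. I treat $\pi$ as a pairing on a set of positive integers extended to act trivially on its negatives, which is the only interpretation consistent with the statement producing two cycles of $\delta_{\varepsilon}\pi\delta\pi\delta_{\varepsilon}$ per cycle $(k,l)$ of $\pi$.

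First I would fix a cycle $(k,l)$ of $\pi$ and evaluate the image of $k$ under the composition applied right to left, splitting into cases on the sign of $\varepsilon(k)$. The key observation is that, since $\pi$ acts trivially on $-I$, exactly one of the two applications of $\pi$ in the product does the work of sending $k$ to $l$, while the other is the identity; the sign $\varepsilon(k)$ determines which one. A brief calculation in each case collapses to the common value $-\varepsilon(k)\varepsilon(l)l$, and the analogous calculation starting from $-k$ gives $\varepsilon(k)\varepsilon(l)l$. These images pin down the two claimed transpositions exactly, and since the cycles $(k,l)$ of $\pi$ partition the positive set, the corresponding transpositions partition $\pm I$.

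For the premap conditions, I observe that within each pair the cycles $(k,-\varepsilon(k)\varepsilon(l)l)$ and $(-k,\varepsilon(k)\varepsilon(l)l)$ are interchanged by conjugation with $\delta$, so $\pi':=\delta_{\varepsilon}\pi\delta\pi\delta_{\varepsilon}$ satisfies $\delta\pi'\delta=\pi'$. Because $\pi'$ is a product of transpositions it is self-inverse, whence $\delta\pi'\delta=(\pi')^{-1}$, which is the first defining condition of a premap. For the no-both-signs condition, a transposition $(k,-\varepsilon(k)\varepsilon(l)l)$ contains both $m$ and $-m$ only if $l=\pm k$, which is excluded since $k$ and $l$ are distinct positive integers. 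This gives both properties required.

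The only obstacle is routine bookkeeping: the four-way split on $\varepsilon(k)$ and on which copy of $\pi$ fires has to be tracked carefully, but each case reduces to a one-line unwinding of the definitions of $\delta$, $\delta_{\varepsilon}$, and the trivial extension of $\pi$.
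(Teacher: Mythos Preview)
Your proposal is correct and follows essentially the same strategy as the paper: a direct computation of the cycle structure followed by verification of the premap conditions. The only organizational difference is that the paper exploits the conjugation structure---writing $\delta_{\varepsilon}\pi\delta\pi\delta_{\varepsilon}=\delta_{\varepsilon}(\pi\delta\pi^{-1})\delta_{\varepsilon}^{-1}$ and using that conjugation relabels cycle entries---to avoid your case split on $\varepsilon(k)$, while you trace elements through the composition explicitly; both routes arrive at the same cycles and the premap check is handled the same way.
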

\begin{proof}
The permutation \(\delta\) conjugated by \(\pi\) consists of cycles of the form \(\left(k,-l\right)\), where \(\left(k,l\right)\) is a cycle of \(\pi\).  Conjugating by \(\delta_{\varepsilon}\) may interchange \(k\) and \(-k\) as well as \(l\) and \(-l\).  If \(\varepsilon\left(k\right)=\varepsilon\left(l\right)\), then the cycle is unchanged; otherwise, the cycles become \(\left(k,l\right)\) and \(\left(-k,-l\right)\) (so no cycle contains both \(k\) and \(-k\), and hence it is a premap).
\end{proof}

We will also use the following lemma, outlined in \cite{MR2851244}.  We give a complete proof here.

\begin{lemma}
\label{genus}
Let \(X\) be an \(M\times N\) matrix with entries \(\frac{1}{\sqrt{N}}f_{ij}\), where the \(f_{ij}\) are independent \(N\left(0,1\right)\) random variables, and let \(Y_{1},\ldots Y_{n}\) be random matrices independent from \(X\) and appropriately sized so the matrix multiplication below is defined.  (We will take the normalized trace to be \(\frac{1}{N}\) times the usual trace regardless of whether it is applied to an \(N\times N\) matrix or an \(M\times M\) matrix; we will assume that \(M\) is of the order of \(N\).)

Let \(\gamma\in S_{n}\).  Then
\begin{multline*}
\mathbb{E}\left(\mathrm{tr}_{\gamma}\left(X^{\left(\varepsilon\left(1\right)\right)}Y_{1},\cdots,X^{\left(\varepsilon\left(n\right)\right)}Y_{n}\right)\right)
\\=\sum_{\pi\in{\cal P}_{2}\left(n\right)}N^{\#\left(\gamma_{-}^{-1}\delta_{\varepsilon}\pi\delta\pi\delta_{\varepsilon}\gamma_{+}\right)/2-\#\left(\gamma\right)-n/2}\\\times\mathbb{E}\left(\mathrm{tr}_{\gamma_{-}^{-1}\delta_{\varepsilon}\pi\delta\pi\delta_{\varepsilon}\gamma_{+}/2}\left(Y_{1},\ldots,Y_{n}\right)\right)\textrm{.}
\end{multline*}
\end{lemma}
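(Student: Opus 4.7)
The plan is to expand the trace combinatorially, apply the Wick formula to the Gaussian entries of \(X\), and then identify the resulting permutation data with the premap \(\mu:=\gamma_{-}^{-1}\delta_{\varepsilon}\pi\delta\pi\delta_{\varepsilon}\gamma_{+}\) of the statement.

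First, I would apply Lemma~\ref{trace} to the trace along \(\gamma\) and insert an intermediate summation index \(j_{-k}\) between each \(X^{\left(\varepsilon\left(k\right)\right)}\) and \(Y_{k}\), rewriting
\[\mathrm{Tr}_{\gamma}\left(X^{\left(\varepsilon\left(1\right)\right)}Y_{1},\ldots,X^{\left(\varepsilon\left(n\right)\right)}Y_{n}\right)=\sum_{j:\pm\left[n\right]\rightarrow\left[N\right]}\prod_{k=1}^{n}\left(X^{\left(\varepsilon\left(k\right)\right)}\right)_{j_{k},j_{-k}}\left(Y_{k}\right)_{j_{-k},j_{\gamma\left(k\right)}}\textrm{.}\]
Taking expectations and using independence of \(X\) from the \(Y_{k}\), the expectation factors. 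I would then apply Lemma~\ref{Wick} to the \(X\)-product. For each pair \(\left\{k,l\right\}\) of a pairing \(\pi\in{\cal P}_{2}\left(n\right)\), the covariance evaluates to
\[\mathbb{E}\left(\left(X^{\left(\varepsilon\left(k\right)\right)}\right)_{j_{k},j_{-k}}\left(X^{\left(\varepsilon\left(l\right)\right)}\right)_{j_{l},j_{-l}}\right)=\frac{1}{N}\mathbf{1}\left[j_{\delta_{\varepsilon}\left(k\right)}=j_{\delta_{\varepsilon}\left(l\right)}\right]\mathbf{1}\left[j_{-\delta_{\varepsilon}\left(k\right)}=j_{-\delta_{\varepsilon}\left(l\right)}\right]\textrm{,}\]
since the row (respectively column) of the underlying Gaussian \(f\) in the \(X^{\left(\varepsilon\left(k\right)\right)}\) entry at position \(\left(j_{k},j_{-k}\right)\) is \(j_{\delta_{\varepsilon}\left(k\right)}\) (respectively \(j_{-\delta_{\varepsilon}\left(k\right)}\)): when \(\varepsilon\left(k\right)=1\) these are \(j_{k},j_{-k}\), and when \(\varepsilon\left(k\right)=-1\) they are swapped. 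The Wick contractions together contribute a factor \(N^{-n/2}\).

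The combinatorial heart of the argument is to identify, for each fixed \(\pi\), the constrained index sum of the \(Y\)-product with a single trace along the particular cycles of \(\mu\).  The Wick indicators force \(j\) to be constant on the orbits of the pairing \(\rho_{\pi}\) on \(\pm\left[n\right]\) that identifies \(\delta_{\varepsilon}\left(k\right)\) with \(\delta_{\varepsilon}\left(l\right)\) and \(-\delta_{\varepsilon}\left(k\right)\) with \(-\delta_{\varepsilon}\left(l\right)\) for each \(\left\{k,l\right\}\in\pi\), while the \(Y\)-product is organized by the ``half-edge'' permutation \(\tau:=\gamma_{+}\delta\) that sends \(k>0\) across \(X^{\left(\varepsilon\left(k\right)\right)}\) to \(-k\) and \(-k\) across \(Y_{k}\) to \(\gamma\left(k\right)\).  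By Lemma~\ref{lemma: pairing}, \(\sigma_{\pi}:=\delta_{\varepsilon}\pi\delta\pi\delta_{\varepsilon}\) is a premap encoding these Wick constraints once the \(\delta\)-symmetry intrinsic to the real covariance is accounted for, and conjugating by \(\gamma_{+}\) and \(\gamma_{-}^{-1}\) weaves in the trace permutation.  The resulting premap \(\mu\) then has the property that each of its cycles \(\left(c_{1},\ldots,c_{m}\right)\) spells out a single trace \(\mathrm{Tr}\left(Y^{\left(c_{1}\right)}\cdots Y^{\left(c_{m}\right)}\right)\), with the sign of \(c_{i}\) dictating whether \(Y_{\left|c_{i}\right|}\) appears transposed via the convention \(Y^{\left(-k\right)}=Y_{k}^{T}\).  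Each cycle of \(\mu\) comes paired with its \(\delta\)-mirror \(\left(-c_{m},\ldots,-c_{1}\right)\) (the premap property), and the identity \(\mathrm{Tr}\left(A_{1}\cdots A_{m}\right)=\mathrm{Tr}\left(A_{m}^{T}\cdots A_{1}^{T}\right)\) shows that these mirror cycles evaluate to the same trace, so the particular cycles select a well-defined canonical representative for each trace factor, and the constrained \(Y\)-sum equals \(\mathrm{Tr}_{\mu/2}\left(Y_{1},\ldots,Y_{n}\right)\).

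Combining the \(N\)-factors then finishes the proof: the normalization \(\mathrm{tr}_{\gamma}=N^{-\#\left(\gamma\right)}\mathrm{Tr}_{\gamma}\) contributes \(N^{-\#\left(\gamma\right)}\), the Wick step contributes \(N^{-n/2}\), and converting the remaining \(\mathrm{Tr}_{\mu/2}\) into \(\mathrm{tr}_{\mu/2}\) contributes \(N^{\#\left(\mu\right)/2}\) (since \(\mu\) is a premap with exactly \(\#\left(\mu\right)/2\) particular cycles), yielding the claimed exponent \(\#\left(\mu\right)/2-\#\left(\gamma\right)-n/2\).  The main obstacle is the permutation-theoretic identification above, which is where the nonorientable surface picture enters: in the complex case the analogous identification of \(X\) and \(X^{*}\) as members of distinct Gaussian families yields only one orientation of pairings, whereas for real \(X\) the covariance \(\mathbb{E}\left(f_{ab}f_{cd}\right)=\delta_{ac}\delta_{bd}\) combined with the transpose identification \(\left(X^{T}\right)_{ij}=X_{ji}\) is precisely what the auxiliary \(\delta\)-conjugations encoded in \(\mu\) record.
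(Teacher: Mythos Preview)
Your proposal is correct and follows essentially the same approach as the paper: expand the trace into entries, apply the Wick formula (Lemma~\ref{Wick}) to the Gaussian entries of \(X\), and identify the resulting constrained index sum as a trace along the particular cycles of the premap \(\mu=\gamma_{-}^{-1}\delta_{\varepsilon}\pi\delta\pi\delta_{\varepsilon}\gamma_{+}\).

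The only notable difference is bookkeeping: the paper parametrizes indices by two functions \(\iota^{+}:\left[n\right]\rightarrow\left[N\right]\) and \(\iota^{-}:\left[n\right]\rightarrow\left[M\right]\) (so that the underlying Gaussian in position \(k\) carries indices \(\iota^{+}_{k}\iota^{-}_{k}\) regardless of \(\varepsilon\left(k\right)\)), whereas you use a single function \(j:\pm\left[n\right]\rightarrow\left[N\right]\) and let \(\delta_{\varepsilon}\) locate the row/column of the underlying \(f\).  Your parametrization is arguably more natural for the \(\pm\left[n\right]\) premap machinery, though note that strictly speaking the indices \(j_{\delta_{\varepsilon}\left(k\right)}\) should range over \(\left[M\right]\) rather than \(\left[N\right]\) since \(X\) is \(M\times N\).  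The paper is also more explicit at the key identification step: it verifies directly, via Lemma~\ref{lemma: pairing}, that the second index of \(Y_{k}\) is constrained to equal the first index of \(Y_{\mu\left(k\right)}\), whereas you summarize this as ``conjugating by \(\gamma_{+}\) and \(\gamma_{-}^{-1}\) weaves in the trace permutation.''  Your permutation \(\tau=\gamma_{+}\delta\) is introduced but not actually used in the argument; the computation goes through without it.
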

\begin{proof}
To allow for the arbitrary occurrences of transposed copies of \(X\) encoded in \(\varepsilon\), in place of the usual indices \(i_{k}\) and \(j_{k}\) we will use indices \(\iota^{+}_{k}\) and \(\iota^{-}_{k}\).  We choose our indices so that the \(k\)th occurrence of the matrix \(X\) will have indices \(\iota^{+}_{k}\iota^{-}_{k}\):
\begin{eqnarray*}
&&\mathbb{E}\left(\mathrm{tr}_{\gamma}\left(X^{\left(\varepsilon\left(1\right)\right)}Y_{1},\cdots,X^{\left(\varepsilon\left(n\right)\right)}Y_{n}\right)\right)
\\&=&\sum_{\substack{\iota^{+}:\left[n\right]\rightarrow\left[N\right]\\\iota^{-}:\left[n\right]\rightarrow\left[M\right]}}N^{-\#\left(\gamma\right)}\mathbb{E}\left(X^{\left(\varepsilon\left(1\right)\right)}_{\iota^{\varepsilon\left(1\right)}_{1}\iota^{-\varepsilon\left(1\right)}_{1}}Y^{\left(1\right)}_{\iota^{-\varepsilon\left(1\right)}_{1}\iota^{\varepsilon\left(\gamma\left(1\right)\right)}_{\gamma\left(1\right)}}\cdots\right.\\&&\left.X^{\left(\varepsilon\left(n\right)\right)}_{\iota^{\varepsilon\left(n\right)}_{n}\iota^{-\varepsilon\left(n\right)}_{n}}Y^{\left(n\right)}_{\iota^{-\varepsilon\left(n\right)}_{n}\iota^{\varepsilon\left(\gamma\left(n\right)\right)}_{\gamma\left(n\right)}}\right)
\\&=&\sum_{\substack{\iota^{+}:\left[n\right]\rightarrow\left[N\right]\\\iota^{-}:\left[n\right]\rightarrow\left[M\right]}}N^{-\#\left(\gamma\right)-n/2}\mathbb{E}\left(Y^{\left(1\right)}_{\iota^{-\varepsilon\left(1\right)}_{1}\iota^{\varepsilon\left(\gamma\left(1\right)\right)}_{\gamma\left(1\right)}}\cdots Y^{\left(n\right)}_{\iota^{-\varepsilon\left(n\right)}_{n}\iota^{\varepsilon\left(\gamma\left(n\right)\right)}_{\gamma\left(n\right)}}\right)\\&&\times\mathbb{E}\left(f_{\iota^{+}_{1}\iota^{-}_{1}}\cdots f_{\iota^{+}_{n}\iota^{-}_{n}}\right)\textrm{.}
\end{eqnarray*}

We now apply Lemma~\ref{Wick} to the expected value expression:
\[\mathbb{E}\left(f_{\iota^{+}_{1}\iota^{-}_{1}}\cdots f_{\iota^{+}_{n}\iota^{-}_{n}}\right)=\sum_{\pi\in{\cal P}_{2}\left(n\right)}\prod_{\left\{k,l\right\}\in\pi}\mathbb{E}\left(f_{\iota^{+}_{k}\iota^{-}_{k}}f_{\iota^{+}_{l}\iota^{-}_{l}}\right)\textrm{.}\]
If \(\iota^{+}_{k}\neq\iota^{+}_{l}\) or \(\iota^{-}_{k}\neq\iota^{-}_{l}\) (that is, if \(\iota^{+}\left(k\right)\neq\iota^{+}\pi\left(k\right)\) or \(\iota^{-}\left(k\right)\neq\iota^{-}\pi\left(k\right)\)) then \(\mathbb{E}\left(f_{\iota^{+}_{k}\iota^{-}_{k}}f_{\iota^{+}_{l}\iota^{-}_{l}}\right)=0\), so a product including this term will vanish.  Thus the term corresponding to a given pairing \(\pi\) is
\[\prod_{\left\{k,l\right\}\in\pi}\mathbb{E}\left(f_{\iota^{+}_{k}\iota^{-}_{k}}f_{\iota^{+}_{l}\iota^{-}_{l}}\right)=\left\{\begin{array}{ll}1,&\iota^{\pm}=\iota^{\pm}\circ\pi\\0,&\textrm{otherwise}\end{array}\right.\textrm{.}\]

Inserting this back into our expression, we get:
\[\sum_{\substack{\iota^{+}:\left[n\right]\rightarrow\left[N\right]\\\iota^{-}:\left[n\right]\rightarrow\left[M\right]}}N^{-\#\left(\gamma\right)-n/2}\mathbb{E}\left(Y^{\left(1\right)}_{\iota^{-\varepsilon\left(1\right)}_{1}\iota^{\varepsilon\left(\gamma\left(1\right)\right)}_{\gamma\left(1\right)}}\cdots Y^{\left(n\right)}_{\iota^{-\varepsilon\left(n\right)}_{n}\iota^{\varepsilon\left(\gamma\left(n\right)\right)}_{\gamma\left(n\right)}}\right)\sum_{\pi\in{\cal P}_{2}\left(n\right):\iota^{\pm}\circ\pi=\iota^{\pm}}1\textrm{.}\]
Reversing the order of summation, we get:
\[=\sum_{\pi\in{\cal P}_{2}}\sum_{\substack{\iota^{+}:\left[n\right]\rightarrow\left[N\right]\\\iota^{-}:\left[n\right]\rightarrow\left[M\right]\\\iota^{\pm}\circ\pi=\iota^{\pm}}}N^{-\#\left(\gamma\right)-n/2}\mathbb{E}\left(Y^{\left(1\right)}_{\iota^{-\varepsilon\left(1\right)}_{1}\iota^{\varepsilon\left(\gamma\left(1\right)\right)}_{\gamma\left(1\right)}}\cdots Y^{\left(n\right)}_{\iota^{-\varepsilon\left(n\right)}_{n}\iota^{\varepsilon\left(\gamma\left(n\right)\right)}_{\gamma\left(n\right)}}\right)\textrm{.}\]

By Lemma~\ref{lemma: pairing}, \(\delta_{\varepsilon}\pi\delta\pi\delta_{\varepsilon}\) and therefore \(\gamma_{-}^{-1}\delta_{\varepsilon}\pi\delta\pi\delta_{\varepsilon}\gamma_{+}\) are premaps.  We give an expression for the element of \(Y_{k}\) which appears, regardless of the sign appearing on \(k\) in \(\gamma_{-}^{-1}\delta_{\varepsilon}\pi\delta\pi\delta_{\varepsilon}\gamma_{+}\): for \(k>0\),
\[Y^{\left(k\right)}_{\iota^{-\mathrm{sgn}\left(k\right)\varepsilon\left(\gamma_{-}\left(k\right)\right)}_{\left|\gamma_{-}\left(k\right)\right|}\iota^{\mathrm{sgn}\left(k\right)\varepsilon\left(\gamma_{+}\left(k\right)\right)}_{\left|\gamma_{+}\left(k\right)\right|}}=Y^{\left(k\right)}_{\iota^{-\varepsilon\left(k\right)}_{k}\iota^{\varepsilon\left(\gamma\left(k\right)\right)}_{\gamma\left(k\right)}}\]
and
\[Y^{\left(-k\right)}_{\iota^{-\mathrm{sgn}\left(-k\right)\varepsilon\left(\gamma_{-}\left(-k\right)\right)}_{\left|\gamma_{-}\left(-k\right)\right|}\iota^{\mathrm{sgn}\left(-k\right)\varepsilon\left(\gamma_{+}\left(-k\right)\right)}_{\left|\gamma_{+}\left(-k\right)\right|}}=Y^{\left(-k\right)}_{\iota^{\varepsilon\left(\gamma\left(k\right)\right)}_{\gamma\left(k\right)}\iota^{-\varepsilon\left(k\right)}_{k}}=Y^{\left(k\right)}_{\iota^{-\varepsilon\left(k\right)}_{k}\iota^{\varepsilon\left(\gamma\left(k\right)\right)}_{\gamma\left(k\right)}}\textrm{.}\]
Since exactly one of \(k\) and \(-k\) appears in the permutation \(\gamma_{-}^{-1}\delta_{\varepsilon}\pi\delta\pi\delta_{\varepsilon}\gamma_{+}/2\), we can write the expression:
\[\sum_{\pi\in{\cal P}_{2}}\sum_{\substack{\iota^{+}:\left[n\right]\rightarrow\left[N\right]\\\iota^{-}:\left[n\right]\rightarrow\left[M\right]\\\iota^{\pm}\circ\pi=\iota^{\pm}}}N^{-\#\left(\gamma\right)-n/2}\mathbb{E}\left(\prod_{k\in\gamma_{-}^{-1}\delta_{\varepsilon}\pi\delta\pi\delta_{\varepsilon}\gamma_{+}/2}Y^{\left(k\right)}_{\iota^{-\mathrm{sgn}\left(k\right)\varepsilon\left(\gamma_{-}\left(k\right)\right)}_{\left|\gamma_{-}\left(k\right)\right|}\iota^{\mathrm{sgn}\left(k\right)\varepsilon\left(\gamma_{+}\left(k\right)\right)}_{\left|\gamma_{+}\left(k\right)\right|}}\right)\textrm{.}\]

We show that the condition \(\iota^{\pm}\circ\pi=\iota^{\pm}\) always pairs a first index with a second index: specifically, the second index of \(Y_{k}\) is paired with the first index of \(Y_{\gamma_{-}^{-1}\delta_{\varepsilon}\pi\delta\pi\delta_{\varepsilon}\gamma_{+}\left(k\right)}\), which must also appear, since \(\gamma_{-}^{-1}\delta_{\varepsilon}\pi\delta\pi\delta_{\varepsilon}\gamma_{+}\left(k\right)\) appears in the same (particular) cycle of \(\gamma_{-}^{-1}\delta_{\varepsilon}\pi\delta\pi\delta_{\varepsilon}\gamma_{+}\).  The second index of \(Y_{k}\) is \(\iota^{\mathrm{sgn}\left(k\right)\varepsilon\left(\gamma_{+}\left(k\right)\right)}_{\left|\gamma_{+}\left(k\right)\right|}\), which is paired with \(\iota^{\mathrm{sgn}\left(k\right)\varepsilon\left(\gamma_{+}\left(k\right)\right)}_{\pi\left(\left|\gamma_{+}\left(k\right)\right|\right)}\).  By Lemma~\ref{lemma: pairing}, \(\delta_{\varepsilon}\pi\delta\pi\delta_{\varepsilon}\) contains the cycle
\[\left(\gamma_{+}\left(k\right),-\mathrm{sgn}\left(\gamma_{+}\left(k\right)\right)\varepsilon\left(\gamma_{+}\left(k\right)\right)\varepsilon\left(\pi\left(\left|\gamma_{+}\left(k\right)\right|\right)\right)\pi\left(\left|\gamma_{+}\left(k\right)\right|\right)\right)\textrm{,}\]
so \(\pi\left(\left|\gamma_{+}\left(k\right)\right|\right)=\left|\gamma_{-}\gamma_{-}^{-1}\delta_{\varepsilon}\pi\delta\pi\delta_{\varepsilon}\gamma_{+}\left(k\right)\right|\) and (since the \(\gamma_{\pm}\) do not change the sign) \(\mathrm{sgn}\left(\gamma_{-}^{-1}\delta_{\varepsilon}\pi\delta\pi\delta_{\varepsilon}\gamma_{+}\left(k\right)\right)=-\mathrm{sgn}\left(k\right)\varepsilon\left(\gamma_{+}\left(k\right)\right)\allowbreak\varepsilon\left(\delta_{\varepsilon}\pi\delta\pi\delta_{\varepsilon}\gamma_{+}\left(k\right)\right)\).  Since each \(\iota^{\pm}_{k}\) appears at most once in the expression, there are no further constraints.

Renaming the first indices and replacing the second indices with the index they are constrained to be equal to, \(Y_{k}\) appears with indices \(i_{k}\) and \(i_{\gamma_{-}^{-1}\delta_{\varepsilon}\pi\delta\pi\delta_{\varepsilon}\gamma_{+}\left(k\right)}\).  We recognize the resulting sum as a product over the cycles of \(\gamma_{-}^{-1}\delta_{\varepsilon}\pi\delta\pi\delta_{\varepsilon}\gamma_{+}/2\) as in Lemma~\ref{trace}, and expressed in terms of the normalized trace, the result follows.
\end{proof}

\begin{remark}
\label{hypergraphs}
It is possible to interpret the summands in these and subsequent calculations as unoriented surfaced hypergraphs (see Remark~\ref{premaps}).  Briefly, the cycles of \(\gamma\) can be thought of as encoding face information, and \(\gamma_{+}\gamma_{-}^{-1}\) face information on the two-sheeted covering space which allows for a consistent orientation.  The cycles of a premap \(\pi\) encode hyperedge information on this covering space.  Vertex information on the covering space is given by the permutation \(\gamma_{+}\pi^{-1}\gamma_{-}^{-1}\), and the traces of the \(Y_{k}\) matrices are given by its inverse.  See \cite{MR1492512, MR2036721, MR2052516, MR2851244} for examples of how matrix integrals may be interpreted in terms of surfaced hypergraphs.  The definition of the Euler characteristic of \(\gamma\) and \(\pi\) which follows is the natural one in this interpretation (\cite{MR0404045} gives a compatible definition of genus).
\end{remark}

\begin{definition}
Let \(I\) be a finite set of integers which does not contain both \(k\) and \(-k\) for any \(k\).  For a \(\gamma\in S\left(I\right)\) and a premap \(\pi\in PM\left(\pm I\right)\), we define
\[\chi\left(\gamma,\pi\right):=\#\left(\gamma_{+}\gamma_{-}^{-1}\right)/2+\#\left(\pi\right)/2+\#\left(\gamma_{+}^{-1}\pi^{-1}\gamma_{-}\right)/2-\left|I\right|\textrm{.}\]
\end{definition}
We note that if \(\pm I_{1}\) and \(\pm I_{2}\) are disjoint, and \(\gamma_{i}\in S\left(I_{i}\right)\) and \(\pi_{i}\in PM\left(\pm I_{i}\right)\) for \(i=1,2\), then
\[\chi\left(\gamma_{1},\pi_{1}\right)+\chi\left(\gamma_{2},\pi_{2}\right)=\chi\left(\gamma_{1}\gamma_{2},\pi_{1}\pi_{2}\right)\textrm{.}\]

\subsection{Real Ginibre matrices}

\begin{definition}
Let \(f_{ij}\) be independent \(N\left(0,1\right)\) random variables, for \(1\leq i,j\leq N\).  Let \(Z:\Omega\rightarrow M_{N\times N}\left(\mathbb{R}\right)\) be a matrix-valued random variable such that the \(ij\)th entry of \(Z\) is \(\frac{1}{\sqrt{N}}f_{ij}\).  Then \(Z\) is a {\em real Ginibre matrix} (see \cite{MR0173726}).
\end{definition}

\begin{lemma}
If \(Z\) is a real Ginibre matrix, \(\gamma\in S_{n}\), \(\varepsilon:\left[n\right]\rightarrow\left\{1,-1\right\}\), and \(Y_{1},\ldots,Y_{n}\) are random matrices independent from \(Z\), then
\begin{multline*}
\mathbb{E}\left(\mathrm{tr}_{\gamma}\left(Z^{\left(\varepsilon\left(1\right)\right)}Y_{1},\ldots,Z^{\left(\varepsilon\left(n\right)\right)}Y_{n}\right)\right)\\=\sum_{\pi\in\left\{\rho\delta\rho:\rho\in{\cal P}_{2}\left(n\right)\right\}}N^{\chi\left(\gamma,\delta_{\varepsilon}\pi\delta_{\varepsilon}\right)-2\#\left(\gamma\right)}\mathbb{E}\left(\mathrm{tr}_{\gamma_{-}^{-1}\delta_{\varepsilon}\pi\delta_{\varepsilon}\gamma_{+}/2}\left(Y_{1},\ldots,Y_{n}\right)\right)\textrm{.}
\end{multline*}
\end{lemma}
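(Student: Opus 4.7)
The plan is to obtain the formula as a direct specialization of Lemma~\ref{genus} followed by a cosmetic re-indexing of the sum. A real Ginibre matrix $Z$ has entries $\frac{1}{\sqrt{N}}f_{ij}$ with $f_{ij}$ independent $N(0,1)$, so Lemma~\ref{genus} applies with $X = Z$ and $M = N$, giving
\[
\mathbb{E}\bigl(\mathrm{tr}_{\gamma}\bigl(Z^{(\varepsilon(1))}Y_{1},\ldots,Z^{(\varepsilon(n))}Y_{n}\bigr)\bigr) = \sum_{\rho\in{\cal P}_{2}(n)} N^{\#(\gamma_{-}^{-1}\delta_{\varepsilon}\rho\delta\rho\delta_{\varepsilon}\gamma_{+})/2 - \#(\gamma) - n/2}\,\mathbb{E}\bigl(\mathrm{tr}_{\gamma_{-}^{-1}\delta_{\varepsilon}\rho\delta\rho\delta_{\varepsilon}\gamma_{+}/2}(Y_{1},\ldots,Y_{n})\bigr).
\]
The proof then reduces to reparameterizing by $\pi := \rho\delta\rho$ and rewriting the exponent in terms of $\chi$.

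First I would verify that the reparameterization is a bijection onto $\{\rho\delta\rho:\rho\in{\cal P}_{2}(n)\}$. By Lemma~\ref{lemma: pairing} applied with trivial $\varepsilon$, the cycles of $\rho\delta\rho$ are $(k,-l)$ and $(-k,l)$ for each pair $\{k,l\}$ of $\rho$; in particular $\rho\delta\rho$ is a premap, and the pair $\{k,l\}$ is recovered from either cycle by taking absolute values. Hence $\rho\mapsto \rho\delta\rho$ is injective, so the two sums correspond term by term under the identification $\pi = \rho\delta\rho$.

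Next I would do the exponent comparison. Unfolding the definition of $\chi$,
\[
\chi(\gamma,\delta_{\varepsilon}\pi\delta_{\varepsilon}) - 2\#(\gamma) = \tfrac{1}{2}\#(\gamma_{+}\gamma_{-}^{-1}) + \tfrac{1}{2}\#(\delta_{\varepsilon}\pi\delta_{\varepsilon}) + \tfrac{1}{2}\#(\gamma_{+}^{-1}\delta_{\varepsilon}\pi^{-1}\delta_{\varepsilon}\gamma_{-}) - n - 2\#(\gamma),
\]
and three elementary identities reduce this to the exponent from Lemma~\ref{genus}. First, $\#(\gamma_{+}\gamma_{-}^{-1}) = 2\#(\gamma)$, noted in the paper just after the definition of $\pi_{\pm}$. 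Second, $\#(\delta_{\varepsilon}\pi\delta_{\varepsilon}) = \#(\pi) = n$, because $\rho$ consists of $n/2$ pairs, each contributing two cycles to $\rho\delta\rho$, and conjugation by $\delta_{\varepsilon}$ preserves cycle counts. Third, $\pi = \rho\delta\rho$ is an involution since $(\rho\delta\rho)^{2} = \rho\delta^{2}\rho = \mathrm{id}$, so $\pi^{-1} = \pi$, and combined with $\#(\sigma) = \#(\sigma^{-1})$ this gives $\#(\gamma_{+}^{-1}\delta_{\varepsilon}\pi^{-1}\delta_{\varepsilon}\gamma_{-}) = \#(\gamma_{-}^{-1}\delta_{\varepsilon}\pi\delta_{\varepsilon}\gamma_{+})$. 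Substituting these three identities collapses the right-hand side above to exactly $\#(\gamma_{-}^{-1}\delta_{\varepsilon}\pi\delta_{\varepsilon}\gamma_{+})/2 - \#(\gamma) - n/2$.

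Since Lemma~\ref{genus} supplies all the substantive content, there is no genuine obstacle; the only point requiring care is the cycle-count bookkeeping in the exponent, which is transparent once one notices that $\rho\delta\rho$ is an involution with exactly $n$ cycles.
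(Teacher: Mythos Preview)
Your proof is correct and follows essentially the same approach as the paper's: apply Lemma~\ref{genus} and reindex by $\pi=\rho\delta\rho$, using that $\delta_{\varepsilon}\pi\delta_{\varepsilon}$ is a pairing with $n$ cycles to match the exponent with $\chi$. The paper's proof is a one-liner (``$\#(\delta_{\varepsilon}\pi\delta_{\varepsilon})=n$, so it follows from Lemma~\ref{genus}''), while you spell out the injectivity of $\rho\mapsto\rho\delta\rho$ and the three cycle-count identities needed for the exponent bookkeeping; these are exactly the details the paper leaves implicit.
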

\begin{proof}
By Lemma~\ref{lemma: pairing}, \(\delta_{\varepsilon}\pi\delta_{\varepsilon}=\delta_{\varepsilon}\rho\delta\rho\delta_{\varepsilon}\) is a pairing, so \(\#\left(\delta_{\varepsilon}\pi\delta_{\varepsilon}\right)=n\), and the lemma follows immediately from Lemma~\ref{genus}.
\end{proof}

\subsection{Gaussian orthogonal matrices}

\begin{definition}
Let \(X:\Omega\rightarrow M_{N\times N}\left(\mathbb{R}\right)\) be a random matrix such that \(X_{ij}=\frac{1}{\sqrt{N}}f_{ij}\), where the \(f_{ij}\) are independent \(N\left(0,1\right)\) random variables.  Then a {Gaussian orthogonal matrix} (or {\em Gaussian orthogonal ensemble} or {\em GOE matrix}) is a matrix \(T:=\frac{1}{\sqrt{2}}\left(X+X^{T}\right)\).  (This definition is equivalent, up to normalization, to more standard definitions given in, e.g., \cite{MR2129906}.  See e.g. \cite{MR1677884, MR2036721} for a demonstration of the equivalence in the complex case; the real case is similar.)
\end{definition}

\begin{lemma}
If \(T\) is a GOE matrix, \(\gamma\in S_{n}\), and \(Y_{1},\ldots,Y_{n}\) are random matrices independent from \(T\), then
\begin{multline*}
\mathbb{E}\left(\mathrm{tr}_{\gamma}\left(TY_{1},\ldots,TY_{n}\right)\right)
\\=\sum_{\pi\in PM\left(\pm\left[n\right]\right)\cap{\cal P}_{2}\left(\pm\left[n\right]\right)}N^{\chi\left(\gamma,\pi\right)-2\#\left(\gamma\right)}\mathbb{E}\left(\mathrm{tr}_{\gamma_{-}^{-1}\pi\gamma_{+}/2}\left(Y_{1},\ldots,Y_{n}\right)\right)\textrm{.}
\end{multline*}
\end{lemma}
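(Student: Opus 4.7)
The plan is to substitute $T = \tfrac{1}{\sqrt{2}}(X+X^{T}) = \tfrac{1}{\sqrt{2}}(X^{(1)}+X^{(-1)})$ (with $X$ a real Ginibre matrix) into the trace, expand by multilinearity over all sign assignments $\varepsilon:[n]\to\{1,-1\}$, apply Lemma~\ref{genus} to each of the $2^{n}$ resulting terms, and then collapse the double sum on $(\varepsilon,\pi)$ into a single sum over premap pairings $\tilde\pi\in PM(\pm[n])\cap{\cal P}_2(\pm[n])$.

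Concretely, after expansion and Lemma~\ref{genus} the expected value becomes
\[
\frac{1}{2^{n/2}}\sum_{\varepsilon}\sum_{\pi\in{\cal P}_{2}(n)} N^{\#(\gamma_{-}^{-1}\delta_{\varepsilon}\pi\delta\pi\delta_{\varepsilon}\gamma_{+})/2-\#(\gamma)-n/2}\,\mathbb{E}\bigl(\mathrm{tr}_{\gamma_{-}^{-1}\delta_{\varepsilon}\pi\delta\pi\delta_{\varepsilon}\gamma_{+}/2}(Y_{1},\ldots,Y_{n})\bigr).
\]
By Lemma~\ref{lemma: pairing}, $\tilde\pi:=\delta_{\varepsilon}\pi\delta\pi\delta_{\varepsilon}$ lies in $PM(\pm[n])\cap{\cal P}_{2}(\pm[n])$, and the summand depends on $(\varepsilon,\pi)$ only through $\tilde\pi$. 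The key combinatorial step is then to show that the map $(\pi,\varepsilon)\mapsto\tilde\pi$ is exactly $2^{n/2}$-to-one, so that the prefactor $2^{-n/2}$ is absorbed and a single sum over $\tilde\pi$ remains.

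To see this, observe that given $\tilde\pi$, the particular cycles have the form $(k,m)$ with $k>0$ and $|m|>k$ (by the premap condition and Lemma~\ref{lemma: pairing}), and taking $\{k,|m|\}$ over all particular cycles recovers $\pi$ uniquely as a pairing of $[n]$. With $\pi$ thus fixed, Lemma~\ref{lemma: pairing} shows that an $\varepsilon$ satisfies $\delta_{\varepsilon}\pi\delta\pi\delta_{\varepsilon}=\tilde\pi$ if and only if, for each cycle $(k,l)$ of $\pi$ (with $k<l$) and corresponding particular cycle $(k,m)$ of $\tilde\pi$, one has $\varepsilon(k)\varepsilon(l)=-\mathrm{sgn}(m)$. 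This is one constraint per block of $\pi$ on disjoint pairs of the $n$ sign variables, giving exactly $2^{n/2}$ valid $\varepsilon$. I expect this counting argument, ensuring that different $(\pi,\varepsilon)$ orbits fit together cleanly to match the premap index set, to be the main conceptual obstacle; geometrically it corresponds to the fact that the two terms of $T$ encode the two sheets of the orientation cover.

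It then remains only to verify the Euler-characteristic form of the exponent. Using $\#(\gamma_{+}\gamma_{-}^{-1})=2\#(\gamma)$, $\#(\tilde\pi)=n$ (as $\tilde\pi$ is a pairing on $2n$ elements), and $\#(\gamma_{+}^{-1}\tilde\pi^{-1}\gamma_{-})=\#(\gamma_{-}^{-1}\tilde\pi\gamma_{+})$, the definition of $\chi$ gives
\[
\chi(\gamma,\tilde\pi)-2\#(\gamma)=\#(\gamma_{-}^{-1}\tilde\pi\gamma_{+})/2-\#(\gamma)-n/2,
\]
which matches the exponent arising from Lemma~\ref{genus} and completes the proof.
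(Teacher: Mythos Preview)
Your proposal is correct and follows essentially the same approach as the paper's proof: expand $T$ into $X$ and $X^{T}$, apply Lemma~\ref{genus} term by term, and show that $(\pi,\varepsilon)\mapsto\delta_{\varepsilon}\pi\delta\pi\delta_{\varepsilon}$ is a $2^{n/2}$-to-one surjection onto $PM(\pm[n])\cap{\cal P}_{2}(\pm[n])$, absorbing the prefactor. Your argument for the $2^{n/2}$-to-one count is in fact more explicit than the paper's (you spell out how $\pi$ is recovered from the particular cycles of $\tilde\pi$ and how the sign constraint on $\varepsilon$ arises block-by-block), and your verification of the exponent via the definition of $\chi$ matches the paper's one-line observation that $\#(\tilde\pi)=n$.
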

\begin{proof}
We expand the left-hand side in terms of \(X\) and \(X^{T}\) and apply Lemma~\ref{genus} to each summand:
\begin{eqnarray*}
&&\mathbb{E}\left(\mathrm{tr}_{\gamma}\left(TY_{1},\cdots,TY_{n}\right)\right)
\\&=&\frac{1}{2^{n/2}}\sum_{\varepsilon:\left[n\right]\rightarrow\left\{1,-1\right\}}\mathbb{E}\left(\mathrm{tr}_{\gamma}\left(X^{\left(\varepsilon\left(1\right)\right)}Y_{1},\cdots,X^{\left(\varepsilon\left(n\right)\right)}Y_{n}\right)\right)
\\&=&\frac{1}{2^{n/2}}\sum_{\varepsilon:\left[n\right]\rightarrow\left\{1,-1\right\}}\sum_{\rho\in{\cal P}_{2}\left(n\right)}N^{\#\left(\gamma_{-}^{-1}\delta_{\varepsilon}\rho\delta\rho\delta_{\varepsilon}\gamma_{+}\right)/2-\#\left(\gamma_{+}\gamma_{-}^{-1}\right)/2-n/2}\\&&\times\mathbb{E}\left(\mathrm{tr}_{\gamma_{-}^{-1}\delta_{\varepsilon}\rho\delta\rho\delta_{\varepsilon}\gamma_{+}/2}\left(Y_{1},\ldots,Y_{n}\right)\right)\textrm{.}
\end{eqnarray*}

We now show that the map \(\left(\rho,\varepsilon\right)\mapsto\delta_{\varepsilon}\rho\delta\rho\delta_{\varepsilon}\) is \(2^{n/2}\)-to-one and its image is \(PM\left(\pm\left[n\right]\right)\cap{\cal P}_{2}\left(\pm \left[n\right]\right)\).  By Lemma~\ref{lemma: pairing}, \(\delta_{\varepsilon}\rho\delta\rho\delta_{\varepsilon}\) is a pairing and a premap.  For any \(\pi\in PM\left(\pm\left[n\right]\right)\cap{\cal P}_{2}\left(\pm\left[n\right]\right)\), we can find a \(\rho\in{\cal P}_{2}\left(n\right)\) and an appropriate \(\varepsilon\) such that we construct \(\pi\) in this manner.  In fact, we have two choices for values of \(\varepsilon\) on the elements of each cycle of \(\pi\), or \(2^{n/2}\) such \(\varepsilon\).

Since \(\pi\) is a pairing, \(\#\left(\pi\right)=n\), giving us the correct exponent on \(N\), and the result follows.
\end{proof}

\subsection{Real Wishart matrices}

\begin{definition}
\label{definition: Wishart}
Let \(\left\{f_{ij}\right\}\) be independent \(N\left(0,1\right)\) random variables, for \(1\leq i\leq M\) and \(1\leq j\leq N\).  Let \(X:\Omega\rightarrow M_{M\times N}\left(\mathbb{R}\right)\) be a matrix-valued random variable such that the \(ij\)th entry of \(X\) is \(\frac{1}{\sqrt{N}}f_{ij}\).  Then a {\em real Wishart matrix} is a matrix \(W=X^{T}DX\) for some \(M\times M\) matrix \(D\).  We will assume that \(M\) is of the order of \(N\), and each \(D\) is part of a family \(D_{\lambda}^{\left(M\right)}\in M_{M\times M}\left(\mathbb{C}\right)\), \(\lambda\in\Lambda\) such that \(\lim_{N\rightarrow\infty}\mathrm{tr}\left(D_{\lambda_{1}}^{\left(M\right)}\cdots D_{\lambda_{n}}^{\left(M\right)}\right)\) exists for all \(n\) and all tuples \(\lambda_{1},\ldots,\lambda_{n}\).
\end{definition}

\begin{remark}
Wishart matrices were first studied in \cite{Wishart}, and are variously defined in such papers as \cite{MR1990662, HSS, MR2052516, MR2132270, MR2439565, MR2480549}.  They are generally defined as matrices of the form \(AX^{T}BXC\) with various requirements on the constant matrices \(A\), \(B\), and \(C\), such as that they must be symmetric, positive definite, or the identity matrix, or various relationships between them.  Here we require \(A\) and \(C\) to be the identity matrix, but we do not put any requirements on \(B\).  The possible asymmetry of \(B\) and therefore the Wishart matrices motivates the appearances of the linear involution in the main definition \ref{real second-order freeness}, and the possibility of different \(B\) matrices motivates the order in which terms appear.
\end{remark}

The following lemma expresses the moments of real Wishart matrices in a form which we will find convenient.  The pairings on \(\left[2n\right]\) which appear in Lemma~\ref{genus} are replaced by premaps on \(\pm\left[n\right]\) under a bijection in which each pair of adjacent elements is compressed into one element.  This is possible because the transposes appear in a regular pattern; in some sense, sign takes the place of parity in this expression.  Geometrically, this calculation can be represented as in \cite{MR2851244}, except we consider the alternate vertices which here contain the \(Y_{k}\) matrices as hyperedges, encoded in the permutation \(\pi\), and the vertices induced in this unoriented hypermap are the vertices containing the \(Y_{k}\) matrices.  An example with a diagram is given after the proof.

\begin{lemma}
\label{Wishart}
If \(\left\{W_{\lambda}:\Omega\rightarrow M_{N\times N}\left(\mathbb{C}\right)\right\}_{\lambda\in\Lambda}\) are real Wishart matrices such that \(W_{\lambda_{k}}=X^{T}D_{\lambda_{k}}X\) for \(1\leq k\leq n\), \(\gamma\in S_{n}\) and \(Y_{1},\ldots,Y_{n}\) are random matrices independent from the \(W_{\lambda_{k}}\), then
\begin{multline*}
\mathbb{E}\left(\mathrm{tr}_{\gamma}\left(W_{\lambda_{1}}Y_{1},\cdots,W_{\lambda_{n}}Y_{n}\right)\right)\\=\sum_{\pi\in PM\left(\pm\left[n\right]\right)}N^{\chi\left(\gamma,\pi\right)-2\#\left(\gamma\right)}\mathrm{tr}_{\pi^{-1}}\left(D_{\lambda_{1}},\ldots,D_{\lambda_{n}}\right)\\\times\mathbb{E}\left(\mathrm{tr}_{\gamma_{-}^{-1}\pi\gamma_{+}/2}\left(Y_{1},\ldots,Y_{n}\right)\right)\textrm{.}
\end{multline*}
\end{lemma}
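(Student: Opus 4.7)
The plan is to reduce the claim to Lemma~\ref{genus} by rewriting each Wishart factor $W_{\lambda_k}Y_k = X^T D_{\lambda_k} X Y_k$ as two consecutive $X$-blocks, so that the left-hand side becomes a generalized trace involving $2n$ copies of $X$ interleaved with the $D$'s and $Y$'s.  Concretely, I set $\tilde\gamma\in S_{2n}$ by replacing each cycle $(k_1,\ldots,k_m)$ of $\gamma$ with $(2k_1-1,2k_1,2k_2-1,2k_2,\ldots,2k_m-1,2k_m)$, set $\tilde\varepsilon(2k-1)=-1$, $\tilde\varepsilon(2k)=1$, and set $\tilde Y_{2k-1}=D_{\lambda_k}$, $\tilde Y_{2k}=Y_k$.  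Then
\[
\mathbb{E}\left(\mathrm{tr}_\gamma(W_{\lambda_1}Y_1,\ldots,W_{\lambda_n}Y_n)\right) = \mathbb{E}\left(\mathrm{tr}_{\tilde\gamma}(X^{(\tilde\varepsilon(1))}\tilde Y_1,\ldots,X^{(\tilde\varepsilon(2n))}\tilde Y_{2n})\right),
\]
and Lemma~\ref{genus} applied to the right-hand side produces a sum over pairings $\tilde\pi\in{\cal P}_2(2n)$, with the big premap $\tilde\gamma_-^{-1}\delta_{\tilde\varepsilon}\tilde\pi\delta\tilde\pi\delta_{\tilde\varepsilon}\tilde\gamma_+$ living on $\pm[2n]$.

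The key combinatorial step is to exhibit the compression bijection ${\cal P}_2(2n)\leftrightarrow PM(\pm[n])$ promised in the statement.  Using the identification $2k\leftrightarrow k$ and $2k-1\leftrightarrow -k$ between $[2n]$ and $\pm[n]$, each pairing $\tilde\pi$ becomes a fixed-point-free involution on $\pm[n]$, and I define the associated premap by $\pi:=\delta\tilde\pi$.  The relation $\delta\pi\delta=\pi^{-1}$ is immediate from $\tilde\pi^2=\mathrm{id}$; the condition that no cycle of $\pi$ contains both $k$ and $-k$ follows from a short analysis showing that if a cycle of $\pi$ were closed under $\delta$, then $\tilde\pi$ would be forced to have a fixed point.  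The inverse map $\pi\mapsto\delta\pi$ establishes the bijection.

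The remaining work is bookkeeping.  Under this bijection, one must show that the trace produced by Lemma~\ref{genus},
\[
\mathbb{E}\left(\mathrm{tr}_{\tilde\gamma_-^{-1}\delta_{\tilde\varepsilon}\tilde\pi\delta\tilde\pi\delta_{\tilde\varepsilon}\tilde\gamma_+/2}(\tilde Y_1,\ldots,\tilde Y_{2n})\right),
\]
splits cleanly as
\[
\mathrm{tr}_{\pi^{-1}}(D_{\lambda_1},\ldots,D_{\lambda_n})\,\mathbb{E}\left(\mathrm{tr}_{\gamma_-^{-1}\pi\gamma_+/2}(Y_1,\ldots,Y_n)\right),
\]
where the particular cycles of the large $\pm[2n]$-premap visiting only odd indices yield the $D$-trace along $\pi^{-1}$, and those visiting only even indices yield the $Y$-trace along $\gamma_-^{-1}\pi\gamma_+$.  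The matching of $N$-exponents then reduces to the identity
\[
\tfrac{1}{2}\#\left(\tilde\gamma_-^{-1}\delta_{\tilde\varepsilon}\tilde\pi\delta\tilde\pi\delta_{\tilde\varepsilon}\tilde\gamma_+\right) = \tfrac{1}{2}\#(\pi) + \tfrac{1}{2}\#\left(\gamma_-^{-1}\pi\gamma_+\right),
\]
which, combined with $\#(\gamma_+\gamma_-^{-1})=2\#(\gamma)$, recovers the claimed exponent $\chi(\gamma,\pi)-2\#(\gamma)$.

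The main obstacle is this parity-separation of the large $\pm[2n]$-cycle structure into two independent pieces on $\pm[n]$---the $D$-chain and the $Y$-chain---together with the associated cycle-count identity above.  Once it is established, for instance by direct index tracking as in the proof of Lemma~\ref{genus}, or by the hypergraph picture of Remark~\ref{hypergraphs} in which the $Y$-matrices are treated as hyperedges and the $D$-blocks become induced vertices, substituting into the normalizations and collecting $N$-factors gives the stated formula.
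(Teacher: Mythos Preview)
Your plan coincides with the paper's: expand each Wishart factor, apply Lemma~\ref{genus} on $2n$ letters, and then compress via a bijection ${\cal P}_2(2n)\leftrightarrow PM(\pm[n])$, showing that the resulting $\pm[2n]$ vertex permutation separates by parity into an odd-index $D$-piece and an even-index $Y$-piece with the stated cycle-count identity. One bookkeeping caution: your bijection $\pi:=\delta\tilde\pi$ is the \emph{inverse} of the paper's (which defines $\pi^{-1}$ directly as the odd-index restriction of the full vertex permutation, pulled back by $\theta_-\colon k\mapsto 2k-1$), so with the natural identification placing $D_{\lambda_k}$ at slot $k$, the odd cycles give the trace along your $\pi$ rather than your $\pi^{-1}$; this is harmless once you sum over all premaps (since $\chi(\gamma,\pi)=\chi(\gamma,\pi^{-1})$ and $\pi\mapsto\pi^{-1}$ is a bijection on $PM(\pm[n])$), but it is exactly the kind of detail that must be tracked when you carry out the parity-separation step you have flagged as the main obstacle.
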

\begin{proof}
In order to put the left-hand-side of our expression into the form of Lemma~\ref{genus}, we define two bijections from the indices \(\left[n\right]\) to the new indices of the \(X^{T}\) and \(X\) terms (respectively) that appear when each Wishart term is expanded:
\[\theta_{-}:k\mapsto 2k-1\textrm{.}\]
and
\[\theta_{+}:k\mapsto 2k\]
We then define, for \(k\in\left[2n\right]\),
\[\gamma^{\prime}\left(k\right)=\left\{\begin{array}{ll}\theta_{+}\theta_{-}^{-1}\left(k\right)&\textrm{\(k\) odd}\\\theta_{-}\gamma\theta_{+}^{-1}\left(k\right)&\textrm{\(k\) even}\end{array}\right.\textrm{,}\]
\(\varepsilon:k\mapsto\left(-1\right)^{k}\), and \(\delta^{\prime}=\delta_{\varepsilon}\).  Then
\begin{eqnarray*}
&&\mathbb{E}\left(\mathrm{tr}_{\gamma}\left(W_{\lambda_{1}}Y_{1},\cdots,W_{\lambda_{n}}Y_{n}\right)\right)\\
&=&\mathbb{E}\left(\mathrm{tr}_{\gamma^{\prime}}\left(X^{T}D_{\lambda_{1}},XY_{1},\ldots,X^{T}D_{\lambda_{n}},XY_{n}\right)\right)\\
&=&\sum_{\rho\in{\cal P}_{2}\left(2n\right)}N^{\#\left(\gamma_{-}^{\prime-1}\delta^{\prime}\rho\delta\rho\delta^{\prime}\gamma_{+}^{\prime}\right)/2-\#\left(\gamma^{\prime}_{+}\gamma^{\prime-1}_{-}\right)/2-n}\\&&\times\mathrm{tr}_{\gamma_{-}^{\prime-1}\delta^{\prime}\rho\delta\rho\delta^{\prime}\gamma_{+}^{\prime}/2}\left(D_{\lambda_{1}},Y_{1},\ldots,D_{\lambda_{n}},Y_{n}\right)\textrm{.}
\end{eqnarray*}

The permutation \(\gamma_{-}^{\prime-1}\delta^{\prime}\rho\delta\rho\delta^{\prime}\gamma_{+}^{\prime}\) acts separately on the odd and the even integers: by Lemma~\ref{lemma: pairing}, \(\delta^{\prime}\rho\delta\rho\delta^{\prime}\) reverses either the parity or the sign.  In the former case, exactly one of \(\gamma_{+}^{\prime}\) and \(\gamma_{-}^{\prime-1}\) acts, and in the latter both or neither do.  In either case, the parity is preserved.

We extend the \(\theta_{\pm}\) as odd functions (so they commute with \(\delta\), \(\delta^{\prime}\), and absolute values).  We define \(\pi\in PM\left(\pm\left[n\right]\right)\) by \(\pi^{-1}=\theta_{-}^{-1}\gamma_{-}^{\prime-1}\delta^{\prime}\rho\delta\rho\delta^{\prime}\gamma_{+}^{\prime}\theta_{-}\).  Then the contribution of the odd cycles is \(\mathrm{tr}_{\pi^{-1}/2}\left(D_{\lambda_{1}},\ldots,D_{\lambda_{n}}\right)\).  We now show that the contribution of the even cycles is \(\mathbb{E}\left(\mathrm{tr}_{\gamma_{-}^{-1}\pi\gamma_{+}/2}\left(Y_{1},\ldots,Y_{n}\right)\right)\).  We note that \(\gamma^{\prime 2}=\theta_{\pm}\gamma\theta_{\pm}^{-1}\) for whichever of \(\theta_{\pm}^{-1}\) is defined, and \(\theta_{+}\theta_{-}^{-1}=\gamma^{\prime}_{+}\gamma^{\prime}_{-}\).  Then
\[\theta_{+}\gamma_{-}^{-1}\pi\gamma_{+}\theta_{+}^{-1}=\gamma_{-}^{\prime-2}\theta_{+}\theta_{-}^{-1}\gamma_{+}^{\prime-1}\delta^{\prime}\rho\delta\rho\delta^{\prime}\gamma_{-}^{\prime}\theta_{-}\theta_{+}^{-1}\gamma_{+}^{\prime 2}=\gamma^{\prime-1}_{-}\delta^{\prime}\rho\delta\rho\delta^{\prime}\gamma^{\prime}_{+}\textrm{.}\]
We can see that \(\#\left(\gamma^{\prime}\right)=\#\left(\gamma\right)\), since odd integers always occur in cycles with even integers, and on even integers, \(\gamma^{\prime 2}=\theta_{+}\gamma\theta_{+}^{-1}\), so the exponent on \(N\) is correct.

We now show that this map from \({\cal P}_{2}\left(2n\right)\) to \(PM\left(\pm\left[n\right]\right)\) is a bijection.  For a given \(\pi\in PM\left(\pm\left[n\right]\right)\), we can reconstruct from \(\pi\) and \(\gamma_{-}^{-1}\pi\gamma_{+}\) the permutation \(\gamma^{\prime-1}_{-}\delta^{\prime}\rho\delta\rho\delta^{\prime}\gamma^{\prime}_{+}\) and hence \(\delta^{\prime}\rho\delta\rho\delta^{\prime}\), and by Lemma~\ref{lemma: pairing} we can reconstruct \(\rho\).  Thus the map is injective.

Conversely, for \(\pi\in PM\left(\pm\left[n\right]\right)\), we can define \(\rho\) on \(k\in\left[2n\right]\) by letting
\[\rho\left(\theta_{\pm}\left(k\right)\right)=\left|\theta_{\mp\mathrm{sgn}\left(\pi^{\mp 1}\left(k\right)\right)}\pi^{\mp 1}\left(k\right)\right|\textrm{.}\]
(All integers in \(\left[n\right]\) are in the range of exactly one of the \(\theta_{\pm 1}\).)  If \(\rho\left(\theta_{\pm}\left(k\right)\right)=\theta_{\pm 1}\left(k\right)\), then \(\mathrm{sgn}\left(\pi^{\mp 1}\left(k\right)\right)=-1\) and \(\left|\pi^{\mp 1}\left(k\right)\right|=k\), which is excluded since \(\pi\) is a premap; thus \(\rho\) has no fixed points.  We calculate:
\begin{multline*}
\rho^{2}\left(\theta_{\pm 1}\left(k\right)\right)\\
=\left|\theta_{\pm\mathrm{sgn}\left(\pi^{\mp 1}\left(k\right)\right)\mathrm{sgn}\left(\pi^{\pm\mathrm{sgn}\left(\pi^{\mp 1}\left(k\right)\right)}\left(\left|\pi^{\mp 1}\left(k\right)\right|\right)\right)}\pi^{\pm\mathrm{sgn}\left(\pi^{\mp 1}\left(k\right)\right)}\left(\left|\pi^{\mp 1}\left(k\right)\right|\right)\right|\textrm{.}
\end{multline*}
Compensating for the absolute value signs, we note that
\begin{multline*}
\pi^{\pm\mathrm{sgn}\left(\pi^{\mp 1}\left(k\right)\right)}\left(\left|\pi^{\mp 1}\left(k\right)\right|\right)=\pi^{\pm\mathrm{sgn}\left(\pi^{\mp 1}\left(k\right)\right)}\left(\mathrm{sgn}\left(\pi^{\mp 1}\left(k\right)\right)\pi^{\mp 1}\left(k\right)\right)\\=\mathrm{sgn}\left(\pi^{\mp 1}\left(k\right)\right)\pi^{\pm\mathrm{sgn}\left(\pi^{\mp 1}\left(k\right)\right)^{2}}\pi^{\mp}\left(k\right)=\mathrm{sgn}\left(\pi^{\mp 1}\left(k\right)\right)k\textrm{,}
\end{multline*}
we can see that \(\rho^{2}\) is the identity.  Thus \(\rho\) is a pairing.

Finally, we show that \(\theta_{-}^{-1}\gamma_{+}^{\prime-1}\delta^{\prime}\rho\delta\rho\delta^{\prime}\gamma_{-}^{\prime}\theta_{-}\) is premap \(\pi\).  Since the permutation \(\gamma_{+}^{\prime-1}\delta^{\prime}\rho\delta\rho\delta^{\prime}\gamma_{-}^{\prime}\) takes odd numbers to odd numbers, \(\gamma^{\prime}_{-}\theta_{-}=\theta_{-\mathrm{sgn}\left(k\right)}\) and \(\theta_{-}^{-1}\gamma^{\prime-1}_{+}=\theta_{\pm}^{-1}\) for whichever \(\theta_{\pm}^{-1}\) is defined.  We have
\begin{multline*}
\rho\left(\left|\theta_{-\mathrm{sgn}\left(k\right)}\left(k\right)\right|\right)=\left|\theta_{\mathrm{sgn}\left(k\right)\mathrm{sgn}\left(\pi^{\mathrm{sgn}\left(k\right)}\left(\left|k\right|\right)\right)}\pi^{\mathrm{sgn}\left(k\right)}\left(\left|k\right|\right)\right|\\=\left|\theta_{\mathrm{sgn}\left(k\right)^{2}\mathrm{sgn}\left(\pi\left(k\right)\right)}\pi\left(k\right)\right|=\left|\theta_{\mathrm{sgn}\left(\pi\left(k\right)\right)}\pi\left(k\right)\right|
\end{multline*}
so by Lemma~\ref{lemma: pairing}, \(\delta^{\prime}\rho\delta\rho\delta^{\prime}\) takes \(\theta_{-\mathrm{sgn}\left(k\right)}\left(k\right)\) to
\begin{multline*}
-\mathrm{sgn}\left(\theta_{-\mathrm{sgn}\left(k\right)}\left(k\right)\right)\varepsilon\left(\theta_{-\mathrm{sgn}\left(k\right)}\left(k\right)\right)\varepsilon\left(\rho\left(\left|\theta_{-\mathrm{sgn}\left(k\right)}\left(k\right)\right|\right)\right)\rho\left(\left|\theta_{-\mathrm{sgn}\left(k\right)}\left(k\right)\right|\right)\\=-\mathrm{sgn}\left(k\right)\varepsilon\left(\theta_{-\mathrm{sgn}\left(k\right)}\left(k\right)\right)\varepsilon\left(\theta_{\mathrm{sgn}\left(\pi\left(k\right)\right)}\pi\left(k\right)\right)\left|\theta_{\mathrm{sgn}\left(\pi\left(k\right)\right)}\pi\left(k\right)\right|\\=\mathrm{sgn}\left(k\right)^{2}\mathrm{sgn}\left(\pi\left(k\right)\right)\left|\theta_{\mathrm{sgn}\left(\pi\left(k\right)\right)}\pi\left(k\right)\right|=\theta_{\mathrm{sgn}\left(k\right)\mathrm{sgn}\left(\pi\left(k\right)\right)}\pi\left(k\right)\textrm{.}
\end{multline*}
The result follows.
\end{proof}

\begin{figure}
\centering
\scalebox{0.5}{\input{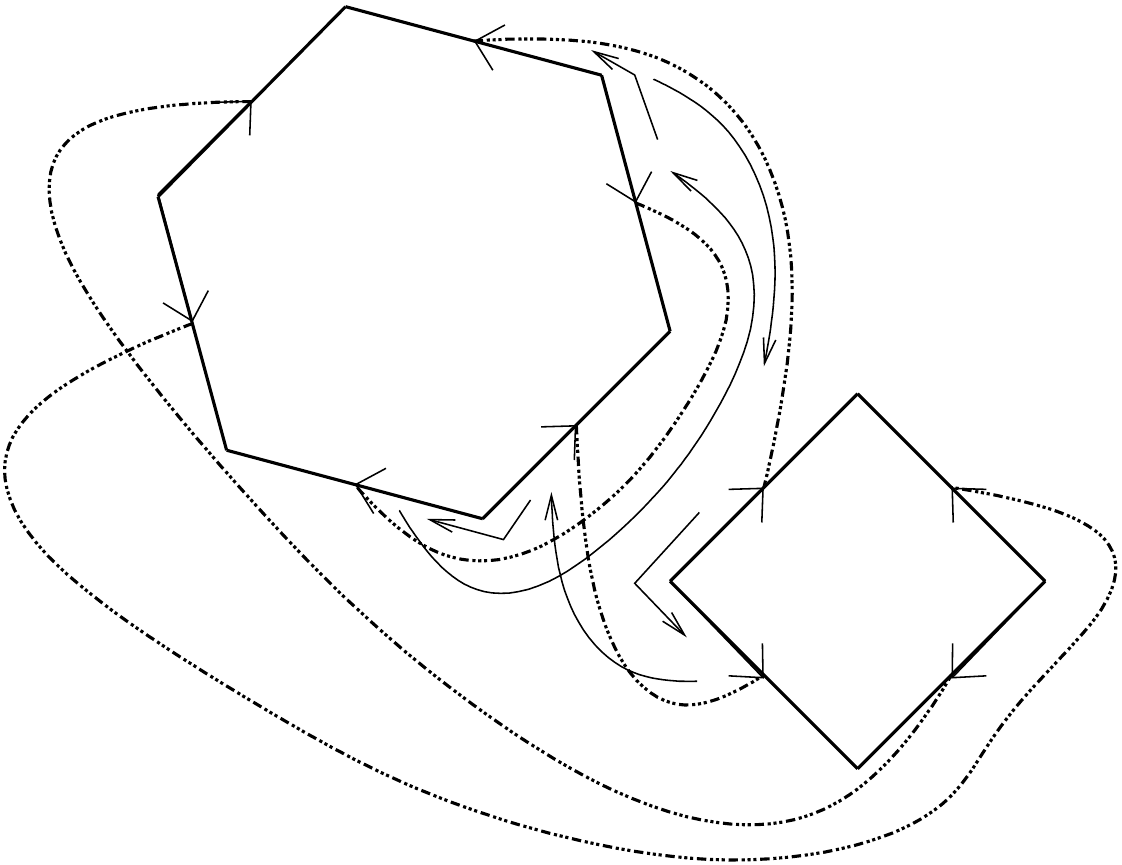_t}}
\scalebox{0.5}{\input{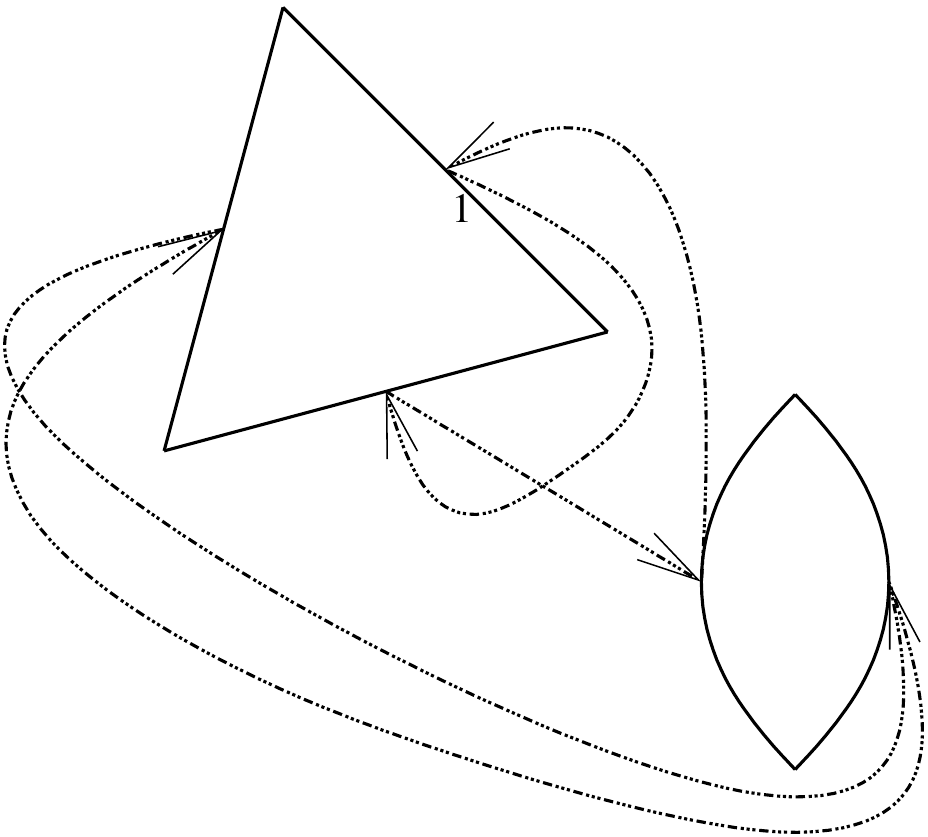_t}}
\caption{A pairing (left) and its equivalent premap (right).  ``Twists'' in the cycles of the premap, which denote that that edge is identified in the opposite direction, correspond to changes in sign.}
\label{hyperedge}
\end{figure}

\begin{example}
If we wish to calculate the value of
\begin{multline*}
\mathbb{E}\left(\mathrm{tr}\left(W_{1}Y_{1}W_{2}Y_{2}W_{3}Y_{3}\right)\mathrm{tr}\left(W_{4}Y_{4}W_{5}Y_{5}\right)\right)\\=\mathbb{E}\left(\mathrm{tr}\left(X^{T}D_{1}XY_{1}X^{T}D_{2}XY_{2}X^{T}D_{3}XY_{3}\right)\mathrm{tr}\left(X^{T}D_{4}XY_{4}X^{T}D_{5}XY_{5}\right)\right)\textrm{,}
\end{multline*}
Lemma~\ref{genus} gives us the expression
\[\sum_{\rho\in{\cal P}_{2}\left(\left[10\right]\right)}N^{\#\left(\gamma_{-}^{\prime-1}\delta^{\prime}\rho\delta\rho\delta^{\prime}\gamma_{+}^{\prime}\right)/2-7}\mathbb{E}\left(\mathrm{tr}_{\gamma_{-}^{\prime-1}\delta^{\prime}\rho\delta\rho\delta^{\prime}\gamma_{+}^{\prime}/2}\left(D_{1},Y_{1},\ldots,D_{5},Y_{5}\right)\right)\]
where
\[\gamma=\left(1,2,3,4,5,6\right)\left(7,8,9,10\right)\textrm{.}\]
Each term in the sum can be represented as a face gluing (see \cite{MR2851244}, Section~3, for more detail).  The pairing
\[\rho=\left(1,5\right)\left(2,7\right)\left(3,9\right)\left(4,10\right)\left(6,8\right)\]
is shown in Figure~\ref{hyperedge}, left.  We calculate that
\begin{multline*}
\gamma_{-}^{\prime-1}\delta^{\prime}\rho\delta\rho\delta^{\prime}\gamma_{+}^{\prime}=\\\left(1,7,-5\right)\left(5,-7,-1\right)\left(2,-8,6,-4,10\right)\left(-10,4,-6,8,-2\right)\left(3,-9\right)\left(9,-3\right)\textrm{.}
\end{multline*}
The region that becomes the vertex \(\left(1,7,-5\right)\left(5,-7,-1\right)\), which becomes \(\mathrm{tr}\left(D_{1}D_{4}D_{3}^{T}\right)\), is marked with arrows.  The contribution of this term is then
\[N^{-4}\mathrm{tr}\left(D_{1}D_{4}D_{3}^{T}\right)\mathrm{tr}\left(D_{2}D_{5}^{T}\right)\mathbb{E}\left(\mathrm{tr}\left(Y_{1}Y_{4}^{T}Y_{3}Y_{2}^{T}Y_{5}\right)\right)\textrm{.}\]

As shown in Figure~\ref{hyperedge}, right, we can ``pinch'' the two edges corresponding to each \(W_{k}\) matrix together.  We now have
\[\gamma=\left(1,2,3\right)\left(4,5\right)\textrm{,}\]
and Lemma~\ref{Wishart} gives us that
\begin{multline*}
\mathbb{E}\left(\mathrm{tr}\left(W_{1}Y_{1}W_{2}Y_{2}W_{3}Y_{3}\right)\mathrm{tr}\left(W_{4}Y_{4}W_{5}Y_{5}\right)\right)\\=\sum_{\pi\in PM\left(\pm\left[5\right]\right)}N^{\chi\left(\gamma,\pi\right)-4}\mathrm{tr}_{\pi^{-1}/2}\left(D_{1},\ldots,D_{5}\right)\mathbb{E}\left(\mathrm{tr}_{\gamma_{-}^{-1}\pi\gamma_{+}/2}\left(Y_{1},\ldots,Y_{5}\right)\right)
\end{multline*}
The above pairing \(\rho\) corresponds to premap
\[\pi=\left(1,-3,4\right)\left(-4,3,-1\right)\left(2,-5\right)\left(5,-2\right)\textrm{.}\]
The cycles \(\left(1,-3,4\right)\left(-4,3,-1\right)\) are the hyperedge corresponding to the marked vertex in the previous diagram.  This premap has vertex permutation
\[\gamma_{-}^{-1}\pi\gamma_{+}=\left(1,-4,3,-2,5\right)\left(-5,2,-3,4,-1\right)\textrm{.}\]
We calculate that
\[\chi\left(\gamma,\pi\right)=2+2+1-5=0\textrm{,}\]
so \(\pi\) gives the same contribution as \(\rho\) did above.
\end{example}

\section{Combinatorial calculations}
\label{combinatorics}

Let \(\left\{X_{\lambda}\right\}_{\lambda\in\Lambda}\) be real Ginibre, GOE, or real Wishart matrices.  (We consider a family of matrices in order to accommodate Wishart matrices \(W_{\lambda_{k}}=X^{T}D_{\lambda_{k}}X\) with the same matrix \(X\) but possibly distinct deterministic matrices \(D_{\lambda_{k}}\); in the Ginibre and GOE cases there is only one matrix \(X_{\lambda}\).  Independent ensembles will be handled by Lemma~\ref{colour}.)  We note that for all \(n\), \(\gamma\in S_{n}\), \(\varepsilon:\left[n\right]\rightarrow\left\{1,-1\right\}\), and \(Y_{k}\) random matrices independent from the \(X_{\lambda}\), the \(X_{\lambda}\) satisfy
\begin{multline}
\label{general}
\mathbb{E}\left(\mathrm{tr}_{\gamma}\left(X_{\lambda_{1}}^{\left(\varepsilon\left(1\right)\right)}Y_{1},\cdots,X_{\lambda_{n}}^{\left(\varepsilon\left(n\right)\right)}Y_{n}\right)\right)
\\=\sum_{\pi\in PM_{c}\left(\pm\left[n\right]\right)}N^{\chi\left(\gamma,\delta_{\varepsilon}\pi\delta_{\varepsilon}\right)-2\#\left(\gamma\right)}f_{c}\left(\pi\right)\mathbb{E}\left(\mathrm{tr}_{\gamma_{-}^{-1}\delta_{\varepsilon}\pi\delta_{\varepsilon}\gamma_{+}/2}\left(Y_{1},\ldots,Y_{n}\right)\right)
\end{multline}
where \(PM_{c}\left(\pm\left[n\right]\right)\) is a subset of the premaps on \(\pm\left[n\right]\) and \(f_{c}\) is a function on those premaps.  In each case, for each finite set of positive integers \(I\), \(PM_{c}\left(\pm I\right)\subseteq PM\left(\pm I\right)\) is a subset of the premaps on \(\pm I\) such that for any \(J\subseteq I\), the \(\pi\in PM_{c}\left(\pm I\right)\) which do not connect \(\pm J\) and \(\pm\left(I\setminus J\right)\) are the product of a \(\pi_{1}\in PM_{c}\left(\pm J\right)\) and \(\pi_{2}\in PM_{c}\left(\pm\left(I\setminus J\right)\right)\), and \(f_{c}:\bigcup_{I\subseteq\mathbb{N},\left|I\right|<\infty}PM_{c}\left(\pm I\right)\rightarrow\mathbb{C}\) is a function such that \(\lim_{N\rightarrow\infty}f_{c}\left(\pi\right)\) exists.  Furthermore, if \(\pi\in PM_{c}\left(I\right)\) does not connect \(\pm J\) and \(\pm\left(I\setminus J\right)\), then \(f_{c}\left(\pi\right)=f_{c}\left(\left.\pi\right|_{\pm J}\right)f_{c}\left(\left.\pi\right|_{\pm\left(I\setminus J\right)}\right)\).  (This last condition is the only one not satisfied by Haar-distributed orthogonal matrices (appearing in future work).  As it is only required for some of the results, we will note when it is needed.)

Specifically, for real Ginibre matrices, \(PM_{c}\left(\pm I\right)=\left\{\rho\delta\rho:\rho\in{\cal P}_{2}\left(I\right)\right\}\) and \(f_{c}\left(\pi\right)=1\).  For GOE matrices, \(PM_{c}\left(\pm I\right)=PM\left(\pm I\right)\cap{\cal P}_{2}\left(\pm I\right)\) and \(f_{c}\left(\pi\right)=1\).  For real Wishart matrices, \(PM_{c}\left(\pm I\right)=PM\left(\pm I\right)\) and, if \(X_{\lambda_{k}}=W_{\lambda_{k}}=X^{T}D_{\lambda_{k}}X\) and \(I=\left\{i_{1},\ldots,i_{m}\right\}\) with \(i_{1}<\ldots<i_{m}\), then \(f_{c}\left(\pi\right)=\mathrm{tr}_{\pi^{-1}/2}\left(D_{\lambda_{i_{1}}},\ldots,D_{\lambda_{i_{m}}}\right)\).

\subsection{Moments and cumulants}

We now give a formula for the traces of products of matrices from several independent ensembles of matrices satisfying (\ref{general}).  It does not depend on the last condition (that \(f\) be multiplicative).  This formula can be used to calculate expressions with several independent Ginibre, GOE, or Wishart matrices, or any combination of such matrices.

\begin{remark}
Lemma~\ref{colour} is the fairly intuitive result that, given a product of traces of several independent ensembles of random matrices, we construct faces as before and glue the edges belonging to each independent ensemble according to the rules of that particular ensemble.  An example with a diagram is given after the proof.
\end{remark}

\begin{lemma}
\label{colour}
For each colour \(c\in\left[C\right]\), let \(\left\{X_{c}^{\left(\lambda\right)}\right\}_{\lambda\in\Lambda}\) be a set of random matrices satisfying (\ref{general}) with subsets of the premaps \(PM_{c}\left(\pm I\right)\) and function \(f_{c}:\bigcup_{I\subseteq\mathbb{N},\left|I\right|<\infty}PM_{c}\left(\pm I\right)\rightarrow\mathbb{C}\), and assume the set associated with each colour \(c\) is independent from every other set.  Let \(w:\left[n\right]\rightarrow\left[C\right]\) be a word in the set of colours \(\left[C\right]\).  Then, for \(\gamma\in S_{n}\) and \(\varepsilon:\left[n\right]\rightarrow\left\{1,-1\right\}\),
\begin{multline}
\label{complement}
\mathbb{E}\left(\mathrm{tr}_{\gamma}\left(X_{w\left(1\right)}^{\left(\varepsilon\left(1\right)\lambda_{1}\right)},\ldots,X_{w\left(n\right)}^{\left(\varepsilon\left(n\right)\lambda_{n}\right)}\right)\right)
\\=\sum_{\substack{\pi=\pi_{1}\ldots\pi_{C}\\\pi_{c}\in PM_{c}\left(\pm w^{-1}\left(c\right)\right)}}N^{\chi\left(\gamma,\delta_{\varepsilon}\pi\delta_{\varepsilon}\right)-2\#\left(\gamma\right)}f_{1}\left(\pi_{1}\right)\cdots f_{C}\left(\pi_{C}\right)\textrm{.}
\end{multline}
\end{lemma}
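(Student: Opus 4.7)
The plan is to induct on the number of colours $C$, peeling off one colour at a time via equation (\ref{general}).

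For the base case $C=1$, all matrices come from a single ensemble and I invoke (\ref{general}) with $Y_k=I_N$ for every $k$. Because $\mathrm{tr}_\sigma(I_N,\ldots,I_N)=1$ for every permutation $\sigma$, the residual expectation disappears and the formula collapses to the single-colour case of (\ref{complement}). Note that the multiplicativity clause on $f_c$ is not needed here, in accordance with the remark preceding the lemma.

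For the inductive step, fix a distinguished colour $c_0\in[C]$ and set $J=w^{-1}(c_0)$, $J^c=[n]\setminus J$. In each cycle of $\gamma$ meeting $J$, group the maximal runs of non-colour-$c_0$ factors between consecutive colour-$c_0$ matrices into a single block: for $k\in J$, let $Y_k$ be the product of the matrices at positions $\gamma(k),\gamma^2(k),\ldots,\gamma^{m_k-1}(k)$, where $m_k$ is minimal with $\gamma^{m_k}(k)\in J$. Cycles of $\gamma$ disjoint from $J$ contribute factors free of colour $c_0$ which separate by independence. The left-hand side of (\ref{complement}) then has the form $\mathrm{tr}_{\gamma|_J}(X_{c_0}^{(\varepsilon(k)\lambda_k)}Y_k,\ldots)$, which is exactly the template of (\ref{general}). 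By independence of the colour-$c_0$ ensemble from the others, I condition on all non-colour-$c_0$ matrices and apply (\ref{general}) to obtain a sum over $\pi_{c_0}\in PM_{c_0}(\pm J)$ weighted by $N^{\chi(\gamma|_J,\delta_\varepsilon\pi_{c_0}\delta_\varepsilon)-2\#(\gamma|_J)}f_{c_0}(\pi_{c_0})$ times $\mathbb{E}(\mathrm{tr}_{\tilde\gamma}(Y_{k_1},\ldots))$, where $\tilde\gamma=(\gamma|_J)_-^{-1}\delta_\varepsilon\pi_{c_0}\delta_\varepsilon(\gamma|_J)_+/2$. Expanding each $Y_k$ back into its constituent colour-$\ne c_0$ factors rewrites this inner expectation as $\mathbb{E}(\mathrm{tr}_{\gamma^{\ast}}(X_{w(j)}^{(\varepsilon(j)\lambda_j)}:j\in J^c))$ for an explicit permutation $\gamma^{\ast}\in S(J^c)$ built from $\gamma$, $\varepsilon$ and $\pi_{c_0}$ (geometrically, $\gamma^{\ast}$ records the face structure induced on the non-$c_0$ part of the hypermap once the colour-$c_0$ hyperedges encoded by $\pi_{c_0}$ are glued in). Applying the inductive hypothesis to $\gamma^{\ast}$ and the $C-1$ remaining ensembles produces the sum over $\pi^{\ast}=\pi_1\cdots\pi_{c_0-1}\pi_{c_0+1}\cdots\pi_C$ with weight $\prod_{c\ne c_0}f_c(\pi_c)$ and exponent $N^{\chi(\gamma^{\ast},\delta_\varepsilon\pi^{\ast}\delta_\varepsilon)-2\#(\gamma^{\ast})}$.

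The main obstacle, and the step I expect to require the most care, is the combinatorial identity
\[
\chi(\gamma|_J,\delta_\varepsilon\pi_{c_0}\delta_\varepsilon)-2\#(\gamma|_J)+\chi(\gamma^{\ast},\delta_\varepsilon\pi^{\ast}\delta_\varepsilon)-2\#(\gamma^{\ast})=\chi(\gamma,\delta_\varepsilon\pi\delta_\varepsilon)-2\#(\gamma),
\]
with $\pi=\pi_{c_0}\pi^{\ast}\in PM(\pm[n])$, which is exactly what is needed to match the exponents on $N$. Since $\pm J$ and $\pm J^c$ are disjoint, $\#(\pi)=\#(\pi_{c_0})+\#(\pi^{\ast})$; the rest of the identity amounts to comparing the cycle counts of $\gamma_+\gamma_-^{-1}$ and of the ``vertex'' permutation $\gamma_+^{-1}\pi^{-1}\gamma_-$ on $\pm[n]$ against their $\pm J$- and $\pm J^c$-restricted analogues, and this is precisely the content of the definition of $\gamma^{\ast}$ together with the additivity of $\chi$ on disjoint arguments noted after the definition of $\chi$. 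Once the identity is verified, multiplying the two partial sums gives (\ref{complement}) and closes the induction.
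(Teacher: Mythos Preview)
Your outline follows the paper's proof closely: induction on $C$, base case via (\ref{general}) with $Y_k=I_N$, peeling off one colour, applying (\ref{general}) to that colour, then the induction hypothesis to the residual trace. You correctly identify the exponent identity as the crux.

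However, your justification of that identity is where the real work lies, and your sketch does not close it. The additivity of $\chi$ on disjoint arguments applies only when the face permutation factors as a product on the disjoint pieces; here $\gamma\neq(\gamma|_J)\cdot\gamma^{\ast}$, so you cannot simply add the two Euler characteristics. What the paper does instead is prove the concrete identification
\[
\gamma^{\ast}_{+}\,(\gamma^{\ast})_{-}^{-1}=\left.\delta_{\varepsilon}\pi_{c_0}\delta_{\varepsilon}\,\gamma_{+}\gamma_{-}^{-1}\right|_{\pm J^{c}},
\]
by tracking, case by case, where $\gamma^{\ast}$ sends an index (inside a $Y_k$, at the end of a $Y_k$, in a cycle disjoint from $J$). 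From this it follows that $\delta_{\varepsilon}\pi^{\ast}\delta_{\varepsilon}\,\gamma^{\ast}_{+}(\gamma^{\ast})_{-}^{-1}=\left.\delta_{\varepsilon}\pi\delta_{\varepsilon}\,\gamma_{+}\gamma_{-}^{-1}\right|_{\pm J^{c}}$, and the paper then shows that the cycles of $\delta_{\varepsilon}\pi\delta_{\varepsilon}\,\gamma_{+}\gamma_{-}^{-1}$ lying entirely in $\pm J$ are in bijection with the cycles of $(\gamma|_J)_{-}^{-1}\delta_{\varepsilon}\pi_{c_0}\delta_{\varepsilon}(\gamma|_J)_{+}$ consisting of $k$ with $Y_k$ trivial. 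Combining these with the bookkeeping $\#(\gamma^{\ast})=\#\bigl((\gamma|_J)_{-}^{-1}\delta_{\varepsilon}\pi_{c_0}\delta_{\varepsilon}(\gamma|_J)_{+}/2\bigr)+\#(\gamma)-\#(\gamma|_J)$ yields the exponent identity. None of this is deep, but it is not a one-line consequence of additivity, and your proposal should spell out at least the identification of $\gamma^{\ast}_{+}(\gamma^{\ast})_{-}^{-1}$ above, since that is what makes the cycle counts match.
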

\begin{proof}
We prove this lemma by induction on the number of colours \(C\).  For \(C=1\), Formula~(\ref{complement}) reduces to (\ref{general}), where \(Y_{1}=\cdots=Y_{n}=I_{N}\).  We now assume Formula~(\ref{complement}) for \(C-1\) colours.

The left hand side of (\ref{complement}) is of the form of (\ref{general}) in colour \(C\), where \(\gamma^{\prime}:=\left.\gamma\right|_{w^{-1}\left(C\right)}\) takes the place of \(\gamma\).  (We may cycle any matrices appearing in a trace before the first occurrence of a matrix of colour \(C\) to the end without changing the value, and any traces which do not contain any matrices of colour \(C\) may be treated as a constant multiplicative factor within the expectation.)  Let \(w^{-1}\left(C\right)=\left\{k_{1},\ldots,k_{\left|w^{-1}\left(C\right)\right|}\right\}\), \(k_{1}<\ldots<k_{\left|w^{-1}\left(C\right)\right|}\).  We call \(Y_{k_{i}}\) the product of the matrices appearing (cyclically) between \(X_{C}^{\left(\varepsilon\left(k_{i}\right)\lambda_{k_{i}}\right)}\) and \(X_{C}^{\left(\varepsilon\left(k_{\gamma^{\prime}\left(i\right)}\right)\lambda_{k_{\gamma^{\prime}\left(i\right)}}\right)}\).  (This may be no matrices, in which case we let \(Y_{k_{i}}:=I_{N}\).)  For any trace which does not contain any matrices of colour \(C\), we call the matrix in the trace \(Z_{k}\), \(1\leq k\leq\#\left(\gamma\right)-\#\left(\gamma^{\prime}\right)\).  Then:
\begin{eqnarray*}
&&\mathbb{E}\left(\mathrm{tr}_{\gamma}\left(X_{w\left(1\right)}^{\left(\varepsilon\left(1\right)\lambda_{1}\right)},\ldots,X_{w\left(n\right)}^{\left(\varepsilon\left(n\right)\lambda_{n}\right)}\right)\right)
\\&=&\mathbb{E}\left(\mathrm{tr}_{\gamma^{\prime}}\left(X_{C}^{\left(\varepsilon\left(k_{1}\right)\lambda_{k_{1}}\right)}Y_{k_{1}},\ldots,X_{C}^{\left(\varepsilon\left(k_{\left|w^{-1}\left(C\right)\right|}\right)\lambda_{k_{\left|w^{-1}\left(C\right)\right|}}\right)}Y_{k_{\left|w^{-1}\left(C\right)\right|}}\right)\right.\\&&\left.\times\mathrm{tr}\left(Z_{1}\right)\cdots\mathrm{tr}\left(Z_{\#\left(\gamma\right)-\#\left(\gamma^{\prime}\right)}\right)\right)
\\&=&\sum_{\pi_{C}\in PM_{C}\left(\pm w^{-1}\left(C\right)\right)}N^{\chi\left(\gamma^{\prime},\delta_{\varepsilon}\pi_{C}\delta_{\varepsilon}\right)-2\#\left(\gamma^{\prime}\right)}f_{C}\left(\pi_{C}\right)\\&&\times\mathbb{E}\left(\mathrm{tr}_{\gamma_{-}^{\prime-1}\delta_{\varepsilon}\pi_{C}\delta_{\varepsilon}\gamma_{+}^{\prime}/2}\left(Y_{k_{1}},\ldots,Y_{k_{\left|w^{-1}\left(C\right)\right|}}\right)\mathrm{tr}\left(Z_{1}\right)\cdots\mathrm{tr}\left(Z_{\#\left(\gamma\right)-\#\left(\gamma^{\prime}\right)}\right)\right)\textrm{.}
\end{eqnarray*}
We note that each matrix of one of the colours in \(\left[C-1\right]\) appears exactly once inside the expected value expression, possibly transposed.  Let us denote by \(I\) the set of signed integers representing the indices of the matrices which appear.  By the induction hypothesis, for some permutation \(\gamma^{\prime\prime}\in S\left(I\right)\), counting cycles as permutations on \(\pm w^{-1}\left(\left[C-1\right]\right)\),
\begin{multline*}
\mathbb{E}\left(\mathrm{tr}_{\gamma_{-}^{\prime-1}\delta_{\varepsilon}\pi_{C}\delta_{\varepsilon}\gamma_{+}^{\prime}/2}\left(Y_{k_{1}},\ldots,Y_{k_{\left|w^{-1}\left(C\right)\right|}}\right)\mathrm{tr}\left(Z_{1}\right)\cdots\mathrm{tr}\left(Z_{\#\left(\gamma\right)-\#\left(\gamma^{\prime}\right)}\right)\right)
\\=\sum_{\substack{\pi^{\prime}=\pi_{1}\cdots\pi_{C-1}\\\pi_{c}\in PM_{c}\left(\pm w^{-1}\left(c\right)\right)}}N^{\chi\left(\gamma^{\prime\prime},\delta_{\varepsilon}\pi^{\prime}\delta_{\varepsilon}\right)-2\#\left(\gamma^{\prime\prime}\right)}f_{1}\left(\pi_{1}\right)\cdots f_{C-1}\left(\pi_{C-1}\right)\textrm{.}
\end{multline*}
(We note that \(PM_{c}\left(\pm w^{-1}\left(c\right)\right)\) is still the appropriate subset of the premaps: if \(-k\in I\) for \(k>0\), then \(-k\) would appear instead of \(k\) in the premaps in \(PM_{c}\left(\pm w^{-1}\left(c\right)\right)\), but the sign of \(\varepsilon\left(k\right)\) and hence \(\delta_{\varepsilon}\left(k\right)\) would be reversed, so the permutation \(\delta_{\varepsilon}\pi_{c}\delta_{\varepsilon}\) is unchanged.)

Substituting this expression into the previous one, it only remains to confirm that the exponent on \(N\) is correct.  To do so, we examine the permutation \(\gamma^{\prime\prime}\).  We show that \(\gamma^{\prime\prime}=\left.\delta_{\varepsilon}\pi_{C}\delta_{\varepsilon}\gamma_{+}\gamma_{-}^{-1}\right|_{I}\).

For \(i\) in a cycle of \(\gamma\) which does not contain colour \(C\), \(\gamma^{\prime\prime}\left(i\right)=\gamma\left(i\right)=\delta_{\varepsilon}\pi_{C}\delta_{\varepsilon}\gamma_{+}\gamma_{-}^{-1}\left(i\right)\). 

If a term with index \(i\) appears in \(Y_{k}\) (\(k\) signed, and hence \(i\) of the same sign as \(k\)), then the next term, if there is one, has index \(\gamma_{+}\gamma_{-}^{-1}\left(i\right)\) (the correct permutation acts given the sign of \(i\) and hence the order in which the terms of \(Y_{k}\) appear).  If there is such a next term, \(w\left(\gamma_{+}\gamma_{-}^{-1}\left(i\right)\right)\neq C\), so \(\delta_{\varepsilon}\pi_{C}\delta_{\varepsilon}\) acts trivially on \(\gamma_{+}\gamma_{-}^{-}\left(i\right)\), and hence \(\gamma^{\prime\prime}\left(i\right)=\left.\delta_{\varepsilon}\pi_{C}\delta_{\varepsilon}\gamma_{+}\gamma_{-}^{-1}\right|_{I}\left(i\right)\).

On the other hand, if there is no such next term, then \(\gamma^{\prime\prime}\left(i\right)\) is the first term of \(Y_{\left(\gamma_{-}^{\prime-1}\delta_{\varepsilon}\pi_{C}\delta_{\varepsilon}\gamma_{+}^{\prime}\right)^{m}\left(k\right)}\), where \(m\) is the smallest positive integer such that this term is nontrivial.  If \(i\) is the last index appearing in \(Y_{k}\), then \(k=\gamma_{+}^{\prime-1}\gamma_{+}\gamma_{-}^{-1}\left(i\right)\) (\(\gamma_{+}^{\prime-1}\) acts depending on the sign of \(k\), i.e.\ on whether \(k\) is to be found before or after the indices appearing in \(Y_{k}\)).  The first term of \(Y_{\gamma_{-}^{\prime-1}\delta_{\varepsilon}\pi_{C}\delta_{\varepsilon}\gamma_{+}^{\prime}\left(k\right)}\) (if it exists) has index \(\gamma_{+}\gamma_{-}^{-1}\gamma_{-}^{\prime}\gamma_{-}^{\prime-1}\delta_{\varepsilon}\pi_{C}\delta_{\varepsilon}\gamma_{+}\gamma_{-}^{-1}\left(i\right)=\gamma_{+}\gamma_{-}^{-1}\delta_{\varepsilon}\pi_{C}\delta_{\varepsilon}\gamma_{+}\gamma_{-}^{-1}\left(i\right)\).  If, however, this index has colour \(C\), it is nonetheless the index encountered after the last term of \(Y_{\gamma_{-}^{\prime-1}\delta_{\varepsilon}\pi_{C}\delta_{\varepsilon}\gamma_{+}^{\prime}\left(k\right)}\), in which case we may apply permutation \(\gamma_{+}\gamma_{-}^{-1}\delta_{\varepsilon}\pi_{C}\delta_{\varepsilon}\) until we have an index not of colour \(C\).  Since \(w\left(\left|l\right|\right)\neq C\) exactly when \(w\left(\left|\delta_{\varepsilon}\pi_{C}\delta_{\varepsilon}\left(l\right)\right|\right)\neq C\) (and then \(\delta_{\varepsilon}\pi_{C}\delta_{\varepsilon}\) acts trivially), the index of the next term is indeed \(\left.\delta_{\varepsilon}\pi_{C}\delta_{\varepsilon}\gamma_{+}\gamma_{-}^{-1}\right|_{I}\left(i\right)\).

On \(-I\), \(\gamma_{-}^{\prime\prime-1}=\left.\delta\delta_{\varepsilon}\pi_{C}\delta_{\varepsilon}\gamma_{+}\gamma_{-}^{-1}\delta\right|_{-I}^{-1}=\left.\gamma_{+}\gamma_{-}^{-1}\delta_{\varepsilon}\pi_{C}\delta_{\varepsilon}\right|_{-I}\).  Since \(\delta_{\varepsilon}\pi_{C}\delta_{\varepsilon}\) acts trivially on \(\pm w^{-1}\left(\left[C-1\right]\right)\), this is equal to \(\left.\gamma_{-}^{\prime\prime-1}=\delta_{\varepsilon}\pi_{C}\delta\gamma_{+}\gamma_{-}^{-1}\right|_{-I}\).  Thus \(\gamma_{+}^{\prime\prime}\gamma_{-}^{\prime\prime-1}=\left.\delta_{\varepsilon}\pi_{C}\delta_{\varepsilon}\gamma_{+}\gamma_{-}^{-1}\right|_{\pm I}\), so \(\delta_{\varepsilon}\pi^{\prime}\delta_{\varepsilon}\gamma_{+}^{\prime\prime}\gamma_{-}^{\prime\prime-1}=\left.\delta_{\varepsilon}\pi\delta_{\varepsilon}\gamma_{+}\gamma_{-}^{-1}\right|_{\pm I}\).

We now show that the cycles of \(\delta_{\varepsilon}\pi\delta_{\varepsilon}\gamma_{+}\gamma_{-}^{-1}\) which do not appear in the permutation it induces on \(\pm I=\pm w^{-1}\left(\left[C-1\right]\right)\), that is, those which are entirely colour \(C\), are in one-to-one correspondence with cycles of \(\gamma_{-}^{\prime-1}\delta_{\varepsilon}\pi\delta_{\varepsilon}\gamma_{+}^{\prime}\) consisting entirely of \(k\) with \(Y_{k}\) trivial (that is, those for which \(w\left(\left|k\right|\right)=w\left(\gamma\left(\left|k\right|\right)\right)=C\)), which we count.  We note that \(\gamma\left(\left|k\right|\right)=\left|\gamma_{+}\gamma_{-}\left(k\right)\right|\).

If a cycle of \(\delta_{\varepsilon}\pi\delta_{\varepsilon}\gamma_{+}\gamma_{-}^{-1}\) consists entirely of \(k\) with \(w\left(\left|k\right|\right)=C\), then it is the factor \(\pi_{C}\) of \(\pi\) which acts, so \(w\left(\left|\gamma_{+}\gamma_{-}^{-1}\left(k\right)\right|\right)=C\).  Since only one of \(\gamma_{+}\) and \(\gamma_{-}^{-1}\) acts nontrivially, \(w\left(\left|\gamma_{+}\left(k\right)\right|\right)=w\left(\left|\gamma_{-}^{-1}\left(k\right)\right|\right)=C\), and so \(\gamma_{+}^{\prime}\left(k\right)=\gamma_{+}\left(k\right)\) and \(\gamma_{-}^{\prime-1}\left(k\right)=\gamma_{-}\left(k\right)\).  Thus this cycle also appears as a cycle of \(\delta_{\varepsilon}\pi_{C}\delta_{\varepsilon}\gamma_{+}^{\prime}\gamma_{-}^{\prime-1}\).

If we map each integer in the cycles of \(\delta_{\varepsilon}\pi_{C}\delta_{\varepsilon}\gamma_{+}^{\prime}\gamma_{-}^{\prime-1}\) under \(\gamma_{-}^{\prime-1}\), they become the cycles of \(\gamma_{-}^{\prime-1}\delta_{\varepsilon}\pi_{C}\delta_{\varepsilon}\gamma_{+}^{\prime}\).  If \(k\) is in a cycle of \(\delta_{\varepsilon}\pi\delta_{\varepsilon}\gamma_{+}\gamma_{-}\) which is entirely of colour \(C\), then \(w\left(\left|\gamma_{-}^{\prime-1}\left(k\right)\right|\right)=C\) and \(w
\left(\left|\gamma_{+}\gamma_{-}\gamma_{-}^{\prime-1}\left(k\right)\right|\right)=w\left(\left|\gamma_{+}\left(k\right)\right|\right)=C\).  Thus \(\gamma_{-}^{\prime-1}\left(k\right)\) in the corresponding cycle of \(\delta_{\varepsilon}\pi_{C}\delta_{\varepsilon}\gamma_{+}^{\prime}\gamma_{-}^{\prime-1}\) has \(Y_{\gamma_{-}^{\prime-1}\left(k\right)}\) trivial.

Conversely, cycles of \(\gamma_{-}^{\prime-1}\delta_{\varepsilon}\pi_{C}\delta_{\varepsilon}\gamma_{+}^{\prime}\) correspond to cycles of \(\delta_{\varepsilon}\pi_{C}\delta_{\varepsilon}\gamma_{+}^{\prime}\gamma_{-}^{\prime-1}\) with each integer mapped under \(\gamma_{-}^{\prime}\).  If \(Y_{k}\) is trivial, that is, if \(w\left(\left|k\right|\right)=w\left(\left|\gamma_{+}\gamma_{-}\left(k\right)\right|\right)=C\), then \(w\left(\left|\gamma_{-}^{\prime}\left(k\right)\right|\right)=w\left(\left|\gamma_{-}\left(k\right)\right|\right)=C\).  Thus a cycle in \(\gamma_{-}^{\prime-1}\delta_{\varepsilon}\pi_{C}\delta_{\varepsilon}\gamma_{+}^{\prime}\) of \(k\) with \(Y_{k}\) trivial corresponds to a cycle of \(\delta_{\varepsilon}\pi_{C}\delta_{\varepsilon}\gamma_{+}^{\prime}\gamma_{-}^{\prime-1}\) of colour \(C\), which is equal to a cycle of \(\delta_{\varepsilon}\pi\delta_{\varepsilon}\gamma_{+}\gamma_{-}^{-1}\).

There are \(\#\left(\gamma\right)-\#\left(\gamma^{\prime}\right)\) cycles of \(\gamma^{\prime\prime}\) which correspond to traces of \(Z_{k}\) matrices and hence \(\#\left(\gamma^{\prime\prime}\right)+\#\left(\gamma^{\prime}\right)-\#\left(\gamma\right)\) which contain the \(Y_{k}\) and hence correspond to cycles of \(\gamma_{-}^{\prime-1}\delta_{\varepsilon}\pi_{C}\delta_{\varepsilon}\gamma_{+}^{\prime}/2\).  Thus \(\#\left(\gamma_{-}^{-1}\delta_{\varepsilon}\pi\delta_{\varepsilon}\gamma_{+}\right)/2=\#\left(\left.\delta_{\varepsilon}\pi\delta_{\varepsilon}\gamma_{+}\gamma_{-}^{-1}\right|_{\pm w^{-1}\left(\left[C-1\right]\right)}\right)/2+\#\left(\gamma_{-}^{\prime-1}\delta_{\varepsilon}\pi_{C}\delta_{\varepsilon}\gamma_{+}^{\prime}\right)/2-\#\left(\gamma^{\prime\prime}\right)-\#\left(\gamma^{\prime}\right)+\#\left(\gamma\right)\).  The result follows.
\end{proof}

\begin{figure}
\centering
\scalebox{0.5}{\input{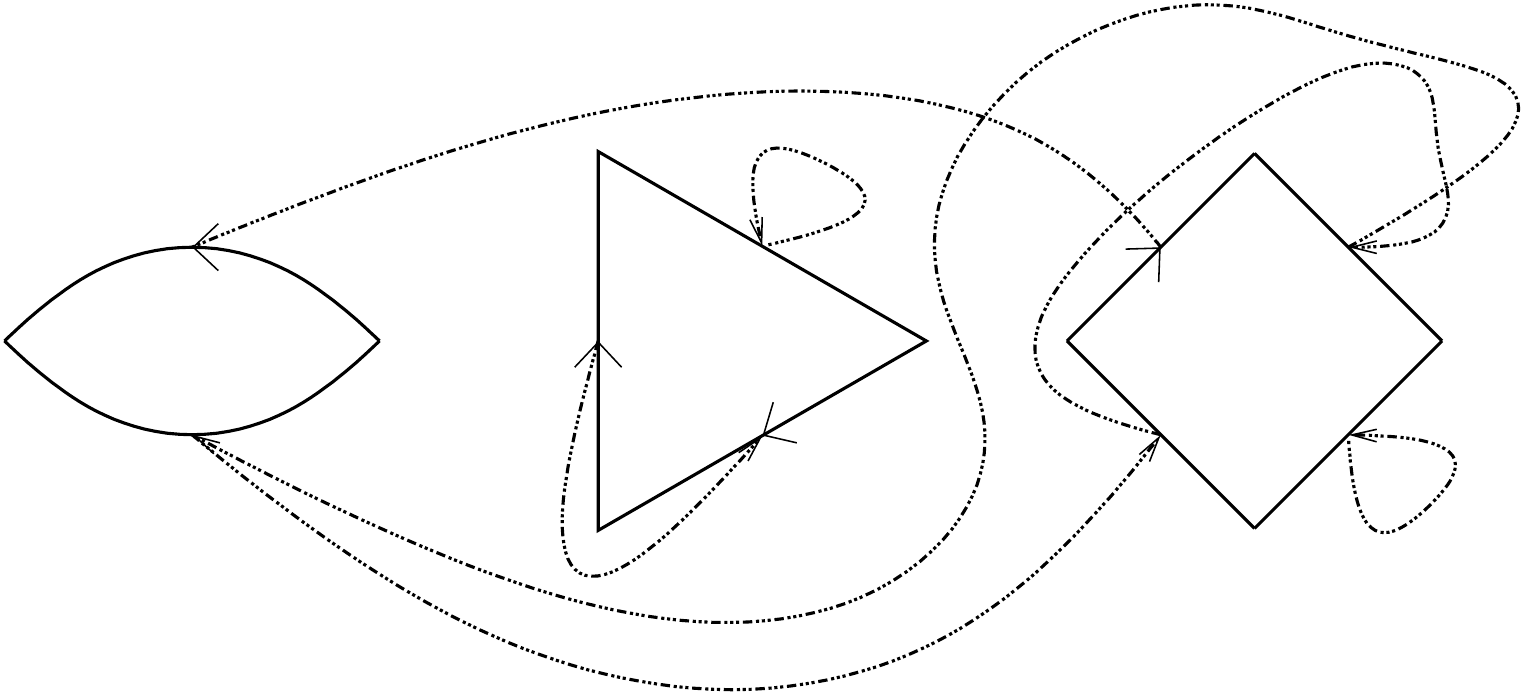_t}}
\caption{The faces associated the calculation in Example~\ref{example: several matrices} involving several independent ensembles and an example of an edge-identification contributing to the result.}
\label{figure: several matrices}
\end{figure}

\begin{example}
\label{example: several matrices}
Let \(C=3\).  Associate with colour \(1\) the set of Wishart matrices
\[\left\{W_{1}^{\left(1\right)}=X_{1}^{T}A_{1}X_{1},W_{1}^{\left(2\right)}=X_{1}^{T}A_{2}X_{1},\ldots\right\}\]
where \(X_{1}\) is a random matrix as in Definition~\ref{definition: Wishart}; with colour \(2\) an independent set of Wishart matrices
\[\left\{W_{2}^{\left(1\right)}=X_{2}^{T}B_{1}X_{2},W_{2}^{\left(2\right)}=X_{2}^{T}B_{2}X_{2},\ldots\right\}\]
where \(X_{2}\) is another random matrix as in Definition~\ref{definition: Wishart} independent from \(X_{1}\); and with colour \(3\) the Ginibre matrix \(Z\), independent from the Wishart matrices.

If we wish to calculate the quantity
\[\mathbb{E}\left(\mathrm{tr}\left(ZW_{2}^{\left(\lambda_{2}\right)}\right)\mathrm{tr}\left(W_{1}^{\left(\lambda_{3}\right)}Z^{T}Z^{T}\right)\mathrm{tr}\left(W_{2}^{\left(\lambda_{6}\right)}Z^{T}W_{2}^{\left(\lambda_{8}\right)}W_{1}^{\left(\lambda_{9}\right)}\right)\right)\textrm{,}\]
for some integers \(\lambda_{2}\), \(\lambda_{3}\), \(\lambda_{6}\), \(\lambda_{8}\) and \(\lambda_{9}\), then we let
\[\gamma=\left(1,2\right)\left(3,4,5\right)\left(6,7,8,9\right)\textrm{,}\]
we let \(w\left(3\right)=w\left(9\right)=1\), \(w\left(2\right)=w\left(6\right)=w\left(8\right)=2\), and \(w\left(1\right)=w\left(4\right)=w\left(5\right)=w\left(7\right)=3\), and we let \(\varepsilon\left(1\right)=\varepsilon\left(2\right)=\varepsilon\left(3\right)=\varepsilon\left(6\right)=\varepsilon\left(8\right)=\varepsilon\left(9\right)=1\) and \(\varepsilon\left(4\right)=\varepsilon\left(5\right)=\varepsilon\left(7\right)=-1\).  We construct faces shown in Figure~\ref{figure: several matrices}.  We then consider the hyperedges we can construct on edges \(3\) and \(9\), the hyperedges we can construct on edges \(2\), \(6\) and \(8\), and the ways in which edges \(1\), \(4\), \(5\) and \(7\) may be identified pairwise in the directions shown.  We show an example of a choice for each corresponding to
\[\pi_{1}=\left(3\right)\left(-3\right)\left(9\right)\left(-9\right)\]
(contributing a factor of \(\mathrm{tr}\left(A_{\lambda_{3}}\right)\mathrm{tr}\left(A_{\lambda_{9}}\right)\)),
\[\pi_{2}=\left(2,8,-6\right)\left(6,-8,-2\right)\]
(contributing a factor of \(\mathrm{tr}\left(B_{\lambda_{2}}B_{\lambda_{6}}^{T}B_{\lambda_{8}}\right)\)),
and
\[\pi_{3}=\left(1,-7\right)\left(-1,7\right)\left(4,-5\right)\left(-4,5\right)\textrm{,}\]
or
\[\delta_{\varepsilon}\pi_{3}\delta_{\varepsilon}=\left(1,7\right)\left(-1,-7\right)\left(4,-5\right)\left(-4,5\right)\textrm{.}\]
This gives us
\begin{multline*}
\delta_{\varepsilon}\pi\delta_{\varepsilon}\\=\left(1,7\right)\left(-1,-7\right)\left(2,8,-6\right)\left(6,-8,-2\right)\left(3\right)\left(-3\right)\left(4,-5\right)\left(-4,5\right)\left(9\right)\left(-9\right)\textrm{.}
\end{multline*}
We calculate that
\[\gamma_{-}^{-1}\delta_{\varepsilon}\pi\delta_{\varepsilon}\gamma_{+}=\left(1,8,9,-7,-2,6\right)\left(-6,2,7,-9,-8,-1\right)\left(3,-4,5\right)\left(-5,4,-3\right)\textrm{.}\]
The contribution of this term is then
\[N^{-5}\mathrm{tr}\left(A_{\lambda_{3}}\right)\mathrm{tr}\left(A_{\lambda_{9}}\right)\mathrm{tr}\left(B_{\lambda_{2}}B_{\lambda_{6}}^{T}B_{\lambda_{8}}\right)\textrm{.}\]
\end{example}

We now show that, if the last condition of (\ref{general}) (the multiplicativity of \(f\)) is satisfied, the cumulants of traces of products of matrices are sums over the premaps which connect blocks \(\pm I\), where \(I\) is an orbit of the permutation \(\gamma\).  Such terms can be thought of as the connected surfaces.

\begin{lemma}
\label{cumulant}
For each colour \(c\in\left[C\right]\), let \(\left\{X_{c}^{\left(\lambda\right)}\right\}_{\lambda\in\Lambda}\) be an ensemble of random matrices satisfying (\ref{general}) with subsets of the premaps \(PM_{c}\) and function \(f_{c}\), and assume that the set associated with colour \(c\) is independent from each other set.  Let \(w:\left[n\right]\rightarrow\left[C\right]\) be a word in the set of colours \(\left[C\right]\).  Let \(n_{1},\ldots,n_{r}\) be positive integers, let \(n:=n_{1}+\cdots+n_{r}\), and let \(I_{k}=\left[n_{1}+\cdots+n_{k-1}+1,n_{1}+\cdots+n_{k}\right]\), \(1\leq k\leq r\).  Let
\[\gamma=\left(1,\ldots,n_{1}\right)\cdots\left(n_{1}+\cdots+n_{r-1},\ldots,n\right)\]
and \(\varepsilon:\left[n\right]\rightarrow\left\{1,-1\right\}\).  For \(1\leq k\leq r\), define classical random variable
\[Y_{k}:=\mathrm{tr}\left(X_{w\left(n_{1}+\cdots+n_{k-1}+1\right)}^{\left(\varepsilon\left(n_{1}+\cdots+n_{k-1}+1\right)\lambda_{n_{1}+\cdots+n_{k-1}+1}\right)}\cdots X_{w\left(n_{1}+\cdots+n_{k}\right)}^{\left(\varepsilon\left(n_{1}+\cdots+n_{k}\right)\lambda_{n_{1}+\cdots+n_{k}}\right)}\right)\textrm{.}\]
Then
\[k_{r}\left(Y_{1},\ldots,Y_{r}\right)
\\=\sum_{\substack{\pi=\pi_{1}\cdots\pi_{C}\\\pi_{c}\in PM_{c}\left(\pm w^{-1}\left(c\right)\right)\\\pi\vee\left\{\pm I_{k}\right\}_{k=1}^{r}=1_{\pm\left[n\right]}}}N^{\chi\left(\gamma,\delta_{\varepsilon}\pi\delta_{\varepsilon}\right)-2r}f_{1}\left(\pi_{1}\right)\cdots f_{C}\left(\pi_{C}\right)\]
where the last condition under the summation sign means that \(\pi\) must connect the blocks \(\pm I_{1},\ldots,\pm I_{r}\).
\end{lemma}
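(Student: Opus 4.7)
The plan is to M\"obius-invert the classical moment--cumulant formula on the partition lattice $\mathcal{P}(r)$, apply Lemma~\ref{colour} blockwise to each resulting joint moment, and finish with a standard lattice cancellation. From (\ref{moment-cumulant}), M\"obius inversion in $\mathcal{P}(r)$ gives
\[
k_r(Y_1,\ldots,Y_r)=\sum_{\sigma\in\mathcal{P}(r)}\mu(\sigma,1_r)\prod_{V\in\sigma}\mathbb{E}\!\left(\prod_{i\in V}Y_i\right).
\]
For each block $V\in\sigma$, the product $\prod_{i\in V}Y_i$ is a product of traces along the cycles of $\gamma_V:=\gamma|_{\bigcup_{i\in V}I_i}$, which has exactly $|V|$ cycles by construction. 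Applying Lemma~\ref{colour} to this expression writes each blockwise moment as a sum over colour-factored premaps $\pi_V=\pi_{V,1}\cdots\pi_{V,C}$ on $\pm\bigcup_{i\in V}I_i$, with weight $N^{\chi(\gamma_V,\delta_\varepsilon\pi_V\delta_\varepsilon)-2|V|}\prod_c f_c(\pi_{V,c})$.

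Next I combine the product over $V\in\sigma$ into a single sum over premaps on $\pm[n]$. The key ingredients are additivity of $\chi$ under disjoint products (noted in the excerpt just after the definition of $\chi$), multiplicativity of each $f_c$ (the last of the standing conditions in the paragraph following (\ref{general})), and the matching closure property of each $PM_c$ under such factorizations. Together these identify tuples $(\pi_V)_{V\in\sigma}$ of blockwise data with premaps $\pi$ on $\pm[n]$ whose every orbit is contained in some $\pm\bigcup_{i\in V}I_i$. Letting $\hat\pi\in\mathcal{P}(r)$ denote the partition in which $k,l$ are in the same block iff $\pi$ connects $\pm I_k$ and $\pm I_l$, this condition is precisely $\hat\pi\preceq\sigma$, and, using $\sum_{V\in\sigma}|V|=r$,
\[
\prod_{V\in\sigma}\mathbb{E}\!\left(\prod_{i\in V}Y_i\right)=\sum_{\pi:\,\hat\pi\preceq\sigma}N^{\chi(\gamma,\delta_\varepsilon\pi\delta_\varepsilon)-2r}f_1(\pi_1)\cdots f_C(\pi_C).
\]

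Substituting back and exchanging the order of summation, the $\sigma$-sum reduces to the inner factor $\sum_{\sigma\succeq\hat\pi}\mu(\sigma,1_r)$, which by the defining property of the M\"obius function of $\mathcal{P}(r)$ equals $\delta_{\hat\pi,1_r}$. Only those $\pi$ with $\hat\pi=1_r$, i.e.\ those with $\pi\vee\{\pm I_k\}_{k=1}^r=1_{\pm[n]}$, survive, giving exactly the stated formula.

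I expect the main obstacle to be the bookkeeping in the middle step: one must check that a premap on $\pm[n]$ whose orbits respect the partition $\sigma_\pm:=\{\pm\bigcup_{i\in V}I_i\}_{V\in\sigma}$ decomposes uniquely as a product of premaps on the individual blocks. This is true because the premap condition forbids $k$ and $-k$ from lying in the same cycle, so orbits automatically respect any $\pm$-symmetric block partition, and the compatibility with the standing hypothesis on $PM_c$ is exactly the factorization assumption imposed on $PM_c$ in the paragraph following (\ref{general}). Everything else in the argument is routine manipulation of the $\chi$-additivity and the $\mathcal{P}(r)$-M\"obius identity.
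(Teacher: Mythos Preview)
Your argument is correct. Both your approach and the paper's rest on the same combinatorial decomposition: group the colour-factored premaps $\pi$ on $\pm[n]$ by the partition $\hat\pi\in\mathcal{P}(r)$ they induce on the blocks $\pm I_1,\ldots,\pm I_r$, and use additivity of $\chi$, multiplicativity of the $f_c$, and the closure of each $PM_c$ under block factorization to identify premaps with $\hat\pi\preceq\sigma$ with tuples of blockwise premaps. The difference is in how this decomposition is deployed. The paper runs the argument forward: it starts from the full moment $\mathbb{E}(Y_1\cdots Y_r)$ expanded via Lemma~\ref{colour}, sorts the $\pi$ by their induced $\rho=\hat\pi$, observes that the sum over each stratum factors as the product over $V\in\rho$ of the conjectured cumulant expressions, and then invokes the uniqueness of the moment--cumulant relations. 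You instead run M\"obius inversion directly, so no appeal to uniqueness is needed; the identity $\sum_{\sigma\succeq\hat\pi}\mu(\sigma,1_r)=\delta_{\hat\pi,1_r}$ does the work. Your route is marginally more self-contained, while the paper's avoids writing down the M\"obius function explicitly; otherwise the two are equivalent.
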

\begin{proof}
We demonstrate this by showing that the moment-cumulant formula (\ref{moment-cumulant}) is satisfied by the conjectured expressions.

Consider the term corresponding to a \(\pi\) in the moment
\begin{eqnarray*}
\mathbb{E}\left(Y_{1}\cdots Y_{r}\right)&=&\mathbb{E}\left(\mathrm{tr}_{\gamma}\left(X_{w\left(1\right)}^{\left(\varepsilon\left(1\right)\lambda_{1}\right)},\ldots,X_{w\left(n\right)}^{\left(\varepsilon\left(n\right)\lambda_{n}\right)}\right)\right)\\
&=&\sum_{\substack{\pi=\pi_{1}\cdots\pi_{C}\\\pi_{c}\in PM_{c}\left(\pm w^{-1}\left(c\right)\right)}}N^{\chi\left(\gamma,\delta_{\varepsilon}\pi\delta_{\varepsilon}\right)-2r}f_{1}\left(\pi_{1}\right)\cdots f_{C}\left(\pi_{C}\right)\textrm{.}
\end{eqnarray*}
Any such \(\pi\) induces a partition \(\rho\in{\cal P}\left(r\right)\): we let \(\rho\) be the smallest partition such that \(s,t\in\left[r\right]\) are in the same block if \(\pm I_{s}\) and \(\pm I_{t}\) are in the same block of \(\pi\vee\left\{\pm I_{k}\right\}_{k=1}^{r}\).  Letting \(I_{V}=\bigcup_{k\in V}I_{k}\) for \(V\subseteq\left[r\right]\), the term corresponding to \(\pi\) can be expressed:
\begin{multline*}
N^{\chi\left(\gamma,\delta_{\varepsilon}\pi\delta_{\varepsilon}\right)-2r}f_{1}\left(\pi_{1}\right)\cdots f_{C}\left(\pi_{C}\right)\\=\prod_{V\in\rho}N^{\chi\left(\left.\gamma\right|_{I_{V}},\left.\delta_{\varepsilon}\pi\delta_{\varepsilon}\right|_{I_{V}}\right)-2\left|V\right|}f_{1}\left(\left.\pi_{1}\right|_{I_{V}}\right)\cdots f_{C}\left(\left.\pi_{C}\right|_{I_{V}}\right)\textrm{.}
\end{multline*}
We note that the induced permutations on \(I_{V}\) are in each case simply the restrictions, since \(\pm I_{V}\) is the union of orbits in each case.  In fact, for \(\pi\) inducing partition \(\rho\), the \(\left.\pi\right|_{I_{V}}\) are exactly the \(\pi^{\prime}=\pi^{\prime}_{1}\cdots\pi^{\prime}_{C}\), \(\pi^{\prime}_{c}\in PM_{c}\left(\pm w^{-1}\left(c\right)\cap\pm I_{V}\right)\) which connect the \(\pm I_{k}\), \(k\in V\).  The sum over all \(\pi\) inducing \(\rho\) is then
\[\prod_{V\in\rho}\sum_{\substack{\pi^{\prime}=\pi^{\prime}_{1}\cdots\pi^{\prime}_{C}\\\pi^{\prime}_{c}\in PM_{c}\left(\pm w^{-1}\left(c\right)\cap I_{V}\right)\\\pi^{\prime}\vee\left\{\pm I_{k}:k\in V\right\}=1_{I_{V}}}}N^{\chi\left(\left.\gamma\right|_{I_{V}},\left.\delta_{\varepsilon}\pi^{\prime}\delta_{\varepsilon}\right|_{I_{V}}\right)-2\left|V\right|}f_{1}\left(\pi^{\prime}_{1}\right)\cdots f_{C}\left(\pi^{\prime}_{C}\right)\]
where the last condition under the summation sign means that \(\pi^{\prime}\) must connect the \(\pm I_{k}\) for all \(k\) in \(V\).  If \(V=\left\{i_{1},\ldots,i_{s}\right\}\), then each term in the product is the conjectured expression for the cumulant \(k_{s}\left(Y_{i_{1}},\ldots,Y_{i_{s}}\right)\).  This demonstrates that the conjectured expression for cumulants satisfies the moment-cumulant formula.
\end{proof}

We will also find it useful to consider expressions in which terms in the algebras generated by each ensemble have been centred.  A cumulant of products of centred terms may be interpreted in terms of the Principle of Inclusion and Exclusion, a generalization to functions of the formulas for calculating the number of elements in certain subsets and not in others (see, e.g. \cite{MR1311922}, Chapter 5, for more details and a proof).

\begin{lemma}[The Principle of Inclusion and Exclusion]
\label{inclusion-exclusion}
Let 
\[f,g:\left\{I\subseteq\left[n\right]\right\}\rightarrow\mathbb{C}\textrm{.}\]
Then the following are equivalent:
\[g\left(I\right)=\sum_{J\supseteq I}f\left(J\right)\textrm{;}\]
\[f\left(I\right)=\sum_{J\supseteq I}\left(-1\right)^{\left|J\setminus I\right|}g\left(J\right)\textrm{.}\]
\end{lemma}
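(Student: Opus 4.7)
The plan is to prove the equivalence by direct substitution in each direction, relying on the elementary identity
\[\sum_{M\subseteq L}\left(-1\right)^{\left|M\right|}=\left(1-1\right)^{\left|L\right|}\]
which equals \(1\) when \(L=\emptyset\) and \(0\) otherwise (equivalently, every nonempty finite set has as many even-sized subsets as odd-sized ones). This identity is the only nontrivial ingredient; the rest is the interchange of two nested summations over the Boolean lattice.

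For the forward implication, I would assume \(g\left(I\right)=\sum_{J\supseteq I}f\left(J\right)\), substitute it into the right-hand side of the candidate formula for \(f\left(I\right)\), and swap the order of summation, obtaining
\[\sum_{J\supseteq I}\left(-1\right)^{\left|J\setminus I\right|}g\left(J\right)=\sum_{K\supseteq I}f\left(K\right)\sum_{J:\,I\subseteq J\subseteq K}\left(-1\right)^{\left|J\setminus I\right|}\textrm{.}\]
Reparametrising the inner sum by \(M:=J\setminus I\), so that \(M\) ranges over all subsets of \(L:=K\setminus I\), it becomes \(\sum_{M\subseteq L}\left(-1\right)^{\left|M\right|}\), which by the identity above contributes only when \(K=I\), collapsing the right-hand side to \(f\left(I\right)\).

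For the reverse implication, I would run the symmetric argument. Starting from \(f\left(I\right)=\sum_{J\supseteq I}\left(-1\right)^{\left|J\setminus I\right|}g\left(J\right)\), substituting into \(\sum_{J\supseteq I}f\left(J\right)\) and interchanging summations yields
\[\sum_{J\supseteq I}f\left(J\right)=\sum_{K\supseteq I}g\left(K\right)\sum_{J:\,I\subseteq J\subseteq K}\left(-1\right)^{\left|K\setminus J\right|}\textrm{,}\]
and reparametrising the inner sum by \(M:=K\setminus J\subseteq K\setminus I\) shows again that only \(K=I\) survives, so the right-hand side collapses to \(g\left(I\right)\).

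There is no genuine obstacle here: the argument is pure bookkeeping once one has the binomial identity \(\left(1-1\right)^{\left|L\right|}\). The statement is of course the classical Möbius inversion on the Boolean lattice, and could also be deduced from the general theory of Möbius inversion over a locally finite poset; the direct substitution is preferable for the present expository purpose since it is self-contained and exhibits precisely why the signs are correct.
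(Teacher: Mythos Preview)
Your proof is correct. The paper does not actually supply its own proof of this lemma: it states the result and refers the reader to an external reference for details and a proof. Your argument---direct substitution in each direction together with the binomial identity \(\sum_{M\subseteq L}(-1)^{|M|}=[L=\emptyset]\)---is the standard self-contained derivation and is more than the paper itself provides.
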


This allows us to write a cumulant of products of centred terms satisfying the last condition of (\ref{general}) (multiplicativity of \(f\)) as a sum over the permutations with certain connectedness properties.  In addition to being a connected surface, each interval of edges corresponding to a centred term must be connected to at least one other.

\begin{lemma}
\label{centred}
Let \(p_{1},\ldots,p_{r}\) be positive integers and let \(p:=p_{1}+\cdots+p_{r}\).

Let \(w:\left[p\right]\rightarrow\left[C\right]\) be a word in the set of colours \(\left[C\right]\).  Associate with each colour an independent matrix ensemble \(\left\{X_{c}^{\left(\lambda\right)}\right\}_{\lambda\in\Lambda}\) satisfying (\ref{general}).

Let \(n_{1},\ldots,n_{p}\) be positive integers, let \(n:=n_{1}+\cdots+n_{p}\), and let
\begin{multline*}
\gamma=\left(1,\ldots,n_{1}+\cdots+n_{p_{1}}\right)\left(n_{1}+\cdots+n_{p_{1}}+1,\ldots,n_{1}+\cdots+n_{p_{2}}\right)\cdots\\\left(n_{1}+\cdots+n_{p_{r-1}}+1,\ldots,n\right)\textrm{;}
\end{multline*}
let \(\varepsilon:\left[n\right]\rightarrow\left\{1,-1\right\}\),
and let
\[A_{k}=\prod_{i=n_{1}+\cdots+n_{k-1}+1}^{n_{1}+\cdots+n_{k}}X_{w\left(k\right)}^{\left(\varepsilon\left(i\right)\lambda_{i}\right)}\textrm{.}\]
For \(k\in\left[p\right]\), let
\[I_{k}:=\left[n_{1}+\cdots+n_{k-1}+1,n_{1}+\cdots+n_{k}\right]\textrm{.}\]
For \(K\subseteq\left[p\right]\), let \(I_{K}=\bigcup_{k\in K}I_{k}\).  Let \(V_{k}=I_{\left[p_{1}+\cdots+p_{k-1}+1,p_{1}+\cdots+p_{k}\right]}\).  Then
\begin{multline*}
k_{r}\left(\mathrm{tr}\left(\mathaccent"7017{A}_{1}\cdots \mathaccent"7017{A}_{p_{1}}\right),\ldots,\mathrm{tr}\left(\mathaccent"7017{A}_{p_{1}+\cdots+p_{r-1}+1}\cdots\mathaccent"7017{A}_{p}\right)\right)
\\=\sum_{\substack{\pi=\pi_{1}\cdots\pi_{C}\\\pi_{c}\in PM_{c}\left(\pm I_{w^{-1}\left(c\right)}\right)\\\pi\vee\left\{\pm V_{l}\right\}_{l=1}^{r}=1_{n}\\\pm I_{k}\notin\pi\vee\left\{\pm I_{k}\right\}_{k=1}^{p}}}N^{\chi\left(\gamma,\delta_{\varepsilon}\pi\delta_{\varepsilon}\right)-2r}f_{1}\left(\pi_{1}\right)\cdots f_{C}\left(\pi_{C}\right)
\end{multline*}
(the third line under the summation sign says that \(\pi\) connects the blocks \(\pm V_{k}\), and the fourth that it connects each block \(\pm I_{k}\) to at least one other).
\end{lemma}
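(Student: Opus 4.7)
The plan is to expand each centred product via $\mathaccent"7017{A}_k = A_k - \mathbb{E}(\mathrm{tr}(A_k))\cdot 1_A$, apply Lemmas~\ref{colour} and~\ref{cumulant} term by term, and use the Principle of Inclusion and Exclusion (Lemma~\ref{inclusion-exclusion}) to extract the desired connectedness conditions on $\pi$.

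Setting $\mathcal{I}_l := [p_1+\cdots+p_{l-1}+1,\ p_1+\cdots+p_l]$ so that $V_l = I_{\mathcal{I}_l}$, the $r$-linearity of $k_r$ gives
\begin{multline*}
k_r\!\left(\mathrm{tr}(\mathaccent"7017{A}_1\cdots\mathaccent"7017{A}_{p_1}),\ldots,\mathrm{tr}(\mathaccent"7017{A}_{p_1+\cdots+p_{r-1}+1}\cdots\mathaccent"7017{A}_p)\right) \\ = \sum_{S\subseteq[p]}(-1)^{|S|}\prod_{k\in S}\mathbb{E}(\mathrm{tr}(A_k))\cdot k_r\!\left(Y_1^S,\ldots,Y_r^S\right),
\end{multline*}
where $Y_l^S := \mathrm{tr}\bigl(\prod_{k\in\mathcal{I}_l\setminus S}A_k\bigr)$. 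For $r\ge 2$, any $S$ with $V_l\subseteq I_S$ makes $Y_l^S$ the constant $1$ and hence the cumulant vanishes; for $r=1$ every $S$ contributes, since $k_1$ of a constant is just the constant. I would then apply Lemma~\ref{colour} to each factor $\mathbb{E}(\mathrm{tr}(A_k))$ (with $\gamma=(I_k)$, a single cycle) and Lemma~\ref{cumulant} to each surviving $k_r(Y_1^S,\ldots,Y_r^S)$ with $\gamma$ replaced by the permutation $\gamma^{S^c}$ whose cycles are the nonempty $V_l\cap I_{S^c}$.

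For each fixed $S$, the pieces $(\{\pi^{(k)}\}_{k\in S},\pi^{S^c})$ assemble by disjoint composition into a single premap $\pi$ on $\pm[n]$, and this correspondence is bijective onto the premaps satisfying $S\subseteq T(\pi)$, where
\[
T(\pi) := \bigl\{k\in[p]\ :\ \pm I_k\text{ is a union of orbits of }\pi\bigr\}.
\]
The product of the $f_c$ values combines correctly by the multiplicativity hypothesis in~(\ref{general}); by the additivity of $\chi$ on disjoint components, the combined exponent of $N$ equals $\chi(\gamma'',\delta_\varepsilon\pi\delta_\varepsilon)-2\#(\gamma'')$, where $\gamma''$ is obtained from $\gamma$ by splitting each $V_l$ at the boundaries of the $I_k$'s with $k\in S$. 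The required identity
\[
\chi(\gamma'',\delta_\varepsilon\pi\delta_\varepsilon)-2\#(\gamma'')=\chi(\gamma,\delta_\varepsilon\pi\delta_\varepsilon)-2\#(\gamma)
\]
follows from a direct cycle-count check: each split increases $\#(\gamma)$ by $1$, $\#(\gamma_+\gamma_-^{-1})$ by $2$, and $\#(\gamma_+^{-1}\pi^{-1}\gamma_-)$ by $2$ (the cut lies in the interior of a single face and is not crossed by the hyperedges of $\pi$, so it creates two new vertices on the 2-sheeted cover), while $\#(\pi)$ and $|I|$ are unchanged.

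Swapping the order of summation, for each premap $\pi$ on $\pm[n]$ with $\pi\vee\{\pm V_l\}_{l=1}^r = 1_{\pm[n]}$ the total coefficient is
\[
\sum_{\substack{S\subseteq T(\pi)\\V_l\not\subseteq I_S\ \forall l}}(-1)^{|S|}.
\]
When $r\ge 2$ the restriction $V_l\not\subseteq I_S$ is automatic: $V_l\subseteq I_{T(\pi)}$ would force $\pm V_l$ to be a union of orbits of $\pi$, contradicting $\pi\vee\{\pm V_l\}=1_{\pm[n]}$. When $r=1$ no such restriction enters the expansion (and the connectedness condition is trivial). In every case the inner sum reduces to $\sum_{S\subseteq T(\pi)}(-1)^{|S|}=[T(\pi)=\emptyset]$ by Lemma~\ref{inclusion-exclusion}, and $T(\pi)=\emptyset$ is precisely the condition $\pm I_k\notin\pi\vee\{\pm I_k\}_{k=1}^p$ for every $k$, so the surviving sum matches the right-hand side of the lemma. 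The principal obstacle is the exponent identity in the third paragraph; once this is in place, the bijective decomposition of the premaps and the inclusion-exclusion bookkeeping are routine.
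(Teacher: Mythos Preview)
Your proposal is correct and follows essentially the same route as the paper's proof: expand the centred terms, apply Lemmas~\ref{colour} and~\ref{cumulant}, verify that the exponent on $N$ is unchanged when the split permutation $\gamma''$ is replaced by $\gamma$, and then collapse the alternating sum over $S$ via inclusion--exclusion. The paper organizes the inclusion--exclusion at the level of ``sum over $\pi$'' (defining $g(K)$ and $f(K)$ and invoking Lemma~\ref{inclusion-exclusion}), while you swap the order of summation and apply the elementary identity $\sum_{S\subseteq T(\pi)}(-1)^{|S|}=[T(\pi)=\emptyset]$ per premap; these are equivalent bookkeepings. For the exponent identity you give a topological heuristic (the cut is interior to a face and uncrossed by $\pi$), whereas the paper writes $\gamma=\gamma'\,\tau^{-1}$ for an explicit product of transpositions and checks that each transposition changes $\#(\gamma)$ and $\#(\gamma_{-}^{-1}\delta_{\varepsilon}\pi\delta_{\varepsilon}\gamma_{+})$ by $1$ and $2$ respectively, using that $\gamma_{-}^{\prime-1}\delta_{\varepsilon}\pi\delta_{\varepsilon}\gamma_{+}^{\prime}$ does not connect $\pm I_{k}$ to anything else when $k\in S\subseteq T(\pi)$. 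Your heuristic is the correct intuition for exactly this transposition computation; if you want the argument to be self-contained you should spell out that step along the paper's lines.
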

\begin{proof}
Expanding the left-hand side expression, we get
\begin{multline*}
\sum_{K\subseteq\left[p\right]}\left(-1\right)^{\left|K\right|}\prod_{k\in K}\mathbb{E}\left(\mathrm{tr}\left(A_{k}\right)\right)\\\times k_{r}\left(\mathrm{tr}\left(\prod_{k\in\left[1,p_{1}\right]\setminus K}A_{k}\right),\ldots,\mathrm{tr}\left(\prod_{k\in\left[p_{1}+\cdots+p_{r-1}+1,p\right]\setminus K}A_{k}\right)\right)\textrm{.}
\end{multline*}
For each \(k\in\left[p\right]\), define the single cycle permutation
\[\gamma_{k}:=\left(n_{1}+\cdots+n_{k-1}+1,\ldots,1+\cdots+n_{k}\right)\textrm{.}\]
Applying Lemmas~\ref{colour} and \ref{cumulant} to our previous expression, we get
\begin{multline*}
\sum_{K\subseteq\left[p\right]}\left(-1\right)^{\left|K\right|}\prod_{k\in K}\sum_{\pi\in PM_{w\left(k\right)}\left(\pm I_{k}\right)}N^{\chi\left(\gamma_{k},\delta_{\varepsilon}\pi\delta_{\varepsilon}\right)-2}f_{w\left(k\right)}\left(\pi\right)\\\times\sum_{\substack{\pi=\pi_{1}\cdots\pi_{C}\\\pi_{c}\in PM_{c}\left(\pm I_{w^{-1}\left(c\right)\setminus K}\right)\\\pi\vee\left\{\pm V_{l}\right\}_{l=1}^{r}=1_{\pm\left[n\right]}}}N^{\chi\left(\left.\gamma\right|_{I_{\left[p\right]\setminus K}},\delta_{\varepsilon}\pi\delta_{\varepsilon}\right)-2r}f_{1}\left(\pi_{1}\right)\cdots f_{C}\left(\pi_{C}\right)\textrm{.}
\end{multline*}
(We note that if any \(\left[p_{1}+\cdots+p_{k-1}+1,p_{1}+\cdots+p_{k}\right]\setminus K\) is empty then one of the entries of the cumulant is \(\mathrm{tr}\left(I_{N}\right)=1\), so the cumulant is zero and the term can be ignored.  Thus we can assume that \(\#\left(\left.\gamma\right|_{I_{\left[p\right]\setminus K}}\right)=r\).)

For a given \(K\), the permutations in each sum act on disjoint sets, so we can express it as a sum over permutations on all of \(\pm\left[n\right]\).  Let
\[\gamma^{\prime}:=\left(\prod_{k\in K}\gamma_{k}\right)\left.\gamma\right|_{I_{\left[p\right]\setminus K}}\textrm{.}\]
Then
\begin{multline*}
\sum_{K\subseteq\left[p\right]}\left(-1\right)^{\left|K\right|}\prod_{k\in K}\sum_{\substack{\pi=\pi_{1}\cdots\pi_{C}\\\pi_{c}\in PM_{c}\left(\pm I_{w^{-1}\left(c\right)}\right)\\\pi\vee\left\{\pm V_{l}\right\}_{l=1}^{r}=1_{\pm\left[n\right]}\\\pm I_{k}\in\pi\vee\left\{\pm I_{k}:k\in K\right\}}}N^{\chi\left(\gamma^{\prime},\delta_{\varepsilon}\pi\delta_{\varepsilon}\right)-2\#\left(\gamma^{\prime}\right)}f_{1}\left(\pi_{1}\right)\cdots f_{C}\left(\pi_{C}\right)
\end{multline*}
(where the last condition under the summation sign means that \(\pi\) does not connect \(\pm I_{k}\) to any other \(\pm I_{l}\), for all \(k\in K\)).

We now show that the exponent on \(N\) is equal to the exponent if permutation \(\gamma^{\prime}\) were replaced with \(\gamma\).  We recall that multiplying a permutation \(\pi\) by a transposition \(\left(k,l\right)\) (on the left or right) joins the orbits of \(\pi\) containing \(k\) and \(l\) if they are in separate orbits, reducing the number of orbits by \(1\), and splits the orbit containing \(k\) and \(l\) if they are in the same orbit, increasing the number of orbits by \(1\).

We note that
\[\gamma=\gamma^{\prime}\left(\prod_{k\in K}\left(n_{1}+\cdots+n_{k},\left.\gamma^{-1}\right|_{\left[n\right]\setminus K}\left(n_{1}+\cdots+n_{k}\right)\right)\right)^{-1}\]
(straightforward calculation; the product of transpositions is in increasing order in \(K\) and inverting the product reverses the order of the transpositions).  Each transposition connects an originally disconnected interval to an orbit of \(\gamma^{\prime}\), so \(\#\left(\gamma\right)=\#\left(\gamma^{\prime}\right)-\left|K\right|\).

We then express \(\gamma_{-}^{-1}\delta_{\varepsilon}\pi\delta_{\varepsilon}\gamma_{+}\) as \(\gamma_{-}^{\prime-1}\delta_{\varepsilon}\pi\delta_{\varepsilon}\gamma_{+}^{\prime}\) left- and right-multiplied by transpositions.  The permutation \(\gamma_{-}^{\prime-1}\delta_{\varepsilon}\pi\delta_{\varepsilon}\gamma_{+}^{\prime}\) does not connect any \(I_{k}\) or \(-I_{k}\) for \(k\in K\) to any other set, so each of these transpositions reduces the number of cycles by \(1\).  Thus \(\#\left(\gamma_{-}^{-1}\delta_{\varepsilon}\pi\delta_{\varepsilon}\gamma_{+}\right)=\#\left(\gamma_{-}^{\prime-1}\delta_{\varepsilon}\pi\delta_{\varepsilon}\gamma_{+}^{\prime}\right)-2\left|K\right|\).

Changing the exponent on \(N\) accordingly, we have:
\[\sum_{K\subseteq\left[p\right]}\left(-1\right)^{\left|K\right|}\sum_{\substack{\pi=\pi_{1}\cdots\pi_{C}\\\pi_{c}\in PM_{c}\left(I_{w^{-1}\left(c\right)}\right)\\\pi\vee\left\{\pm V_{l}\right\}_{l=1}^{r}=1_{\pm\left[n\right]}\\\pm I_{k}\in\pi\vee\left\{\pm I_{k}:k\in K\right\}}}N^{\chi\left(\gamma,\delta_{\varepsilon}\pi\delta_{\varepsilon}\right)-2\#\left(\gamma\right)}f_{1}\left(\pi_{1}\right)\cdots f_{C}\left(\pi_{C}\right)\textrm{.}\]

We can then interpret this expression in terms of Lemma~\ref{inclusion-exclusion}.  In our case, we will let \(f,g:\left\{K\subseteq\left[p\right]\right\}\rightarrow\mathbb{C}\), with \(g\left(K\right)\) the sum of terms over \(\pi\) which do not connect the intervals \(\pm I_{k}\) to any other interval for all \(k\in K\), and \(f\left(K\right)\) the sum of terms over \(\pi\) which do not connect the intervals \(\pm I_{k}\) to any other interval for exactly the \(k\in K\) (i.e. any other interval is connected to another).  We note that \(f\) and \(g\) satisfy the hypotheses of Lemma~\ref{inclusion-exclusion}.  The desired quantity is equal to \(f\left(\emptyset\right)\), from which the result follows.
\end{proof}

\section{Asymptotic calculations}
\label{asymptotics}

We may find upper bounds on the order of \(N\) and characterize highest-order terms in many of the above formulas using the following well-known result (see, e.g. \cite{MR1396978}, and \cite{MR2052516} for a proof):

\begin{theorem}
\label{geodesic}
Let \(\pi,\rho\in S\left(I\right)\) for some finite set \(I\).  Then
\[\#\left(\pi\right)+\#\left(\pi\rho\right)+\#\left(\rho\right)\leq\left|I\right|+2\#\langle\pi,\rho\rangle\textrm{.}\]
\end{theorem}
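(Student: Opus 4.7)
The plan is to deduce the inequality from the nonnegativity of the genus of the combinatorial map associated with the pair $(\pi,\rho)$. Let $k:=\#\langle\pi,\rho\rangle$ and decompose $I=I_{1}\sqcup\cdots\sqcup I_{k}$ into the orbits of $\langle\pi,\rho\rangle$. Each of $\pi$, $\rho$, and $\pi\rho$ preserves this decomposition and restricts to a permutation on every $I_{j}$, so the three cycle counts split additively over the orbits. It therefore suffices to prove the sharper inequality
\[
\#\bigl(\pi|_{I_{j}}\bigr)+\#\bigl(\rho|_{I_{j}}\bigr)+\#\bigl((\pi\rho)|_{I_{j}}\bigr)\leq|I_{j}|+2
\]
under the assumption that $\langle\pi|_{I_{j}},\rho|_{I_{j}}\rangle$ acts transitively on $I_{j}$, and then sum over $j$ to recover the $|I|+2k$ on the right.

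To handle the transitive case, I would build the bipartite ribbon graph suggested by Remark~\ref{hypergraphs}: take one black vertex for each cycle of $\pi|_{I_{j}}$, one white vertex for each cycle of $\rho|_{I_{j}}$, and, for each element $a\in I_{j}$, an edge joining the black vertex indexed by the $\pi$-cycle containing $a$ to the white vertex indexed by the $\rho$-cycle containing $a$, with the cyclic order at each vertex dictated by the cycle of the corresponding permutation. A direct verification shows that the face boundaries of this embedded graph are exactly the cycles of $(\pi\rho)|_{I_{j}}$, so Euler's formula reads
\[
\#\bigl(\pi|_{I_{j}}\bigr)+\#\bigl(\rho|_{I_{j}}\bigr)-|I_{j}|+\#\bigl((\pi\rho)|_{I_{j}}\bigr)=2-2g_{j},
\]
and nonnegativity of the genus $g_{j}$ yields the desired orbitwise bound.

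A more elementary route would proceed by induction on the Cayley length $|\pi|+|\rho|$, where $|\sigma|:=|I|-\#(\sigma)$ is the minimum number of transpositions needed to factor $\sigma$. The base case $\pi=\rho=\mathrm{id}$ saturates the bound. In the inductive step one writes $\pi=\pi't$ with $t=(a,\pi(a))$, so that $\#(\pi')=\#(\pi)+1$ and $\#(\pi'\rho)$ differs from $\#(\pi\rho)$ by $\pm 1$ (since $\pi'\rho$ equals $\pi\rho$ right-multiplied by the conjugate transposition $\rho^{-1}t\rho$); one then uses that passing from $\langle\pi',\rho\rangle$ to $\langle\pi,\rho\rangle$ amounts to adjoining the transposition $t$, which can merge at most two orbits, so $\#\langle\pi',\rho\rangle\leq\#\langle\pi,\rho\rangle+1$.

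The main obstacle in the inductive approach is matching the three $\pm 1$ shifts with the at most unit change in the orbit count of $\langle\pi,\rho\rangle$: the unfavourable combination in which $\#(\pi'\rho)$ decreases while $\#\langle\pi',\rho\rangle$ strictly exceeds $\#\langle\pi,\rho\rangle$ must be ruled out by a case analysis depending on whether $\rho^{-1}(a)$ and $\rho^{-1}(b)$ lie in the same cycle of $\pi\rho$. The topological route bypasses this bookkeeping, as the nonnegativity of $g_{j}$ packages exactly the content of that case analysis.
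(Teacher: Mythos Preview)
The paper does not supply its own proof of this theorem: it quotes the inequality as a well-known result and cites \cite{MR1396978} and \cite{MR2052516} for a proof. So there is no in-paper argument to compare against.

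Your topological route is correct and is essentially the standard argument (and the one in \cite{MR2052516}). The reduction to the transitive case is clean: each of $\pi$, $\rho$, and $\pi\rho$ preserves the $\langle\pi,\rho\rangle$-orbit decomposition, and cycle counts add over orbits. In the transitive case the triple $(\pi,\rho,(\pi\rho)^{-1})$ with product the identity defines a connected orientable hypermap on $|I_j|$ darts whose vertices, hyperedges, and faces are the cycles of $\pi$, $\rho$, and $(\pi\rho)^{-1}$ respectively; Euler's formula then reads $\#(\pi)+\#(\rho)+\#(\pi\rho)=|I_j|+2-2g_j$ with $g_j\geq 0$, which is exactly your display. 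The only cosmetic point is that the face permutation is $(\pi\rho)^{-1}$ rather than $\pi\rho$, but since $\#((\pi\rho)^{-1})=\#(\pi\rho)$ this does not affect the count.

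Your inductive sketch is, as you yourself flag, not complete: the case where $\#(\pi'\rho)=\#(\pi\rho)-1$ while $\#\langle\pi',\rho\rangle=\#\langle\pi,\rho\rangle+1$ does need to be excluded, and that exclusion is the substance of the argument. Since the topological proof already delivers the result without this bookkeeping, there is no need to pursue the induction further.
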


We see that if \(\gamma\in S_{n}\) and \(\pi\in PM\left(\pm\left[n\right]\right)\) represent a connected surface (see Remarks~\ref{premaps} and \ref{hypergraphs}), then \(\chi\left(\gamma,\pi\right)\leq 2\), as we would expect of an Euler characteristic:

\begin{lemma}
\label{sphere}
Let \(\gamma\in S_{n}\), and let \(\left\{V_{1},\ldots,V_{r}\right\}\in{\cal P}\left(n\right)\) be the orbits of \(\gamma\).  If \(\pi\in PM\left(\pm\left[n\right]\right)\) connects the blocks of \(\left\{\pm V_{1},\ldots,\pm V_{r}\right\}\), then \(\chi\left(\gamma,\pi\right)\leq 2\).
\end{lemma}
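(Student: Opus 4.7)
The plan is to deduce the bound from Theorem~\ref{geodesic} applied on the $2n$-element set $\pm[n]$ to the pair $\sigma_1 := \gamma_+\gamma_-^{-1}$ and $\sigma_2 := \pi$. Two preliminary identities prepare the way. First, since $\gamma_+$ and $\gamma_-$ have disjoint supports $[n]$ and $-[n]$ they commute, and the cycles of $\gamma_+\gamma_-^{-1}$ decompose into the $r$ cycles of $\gamma$ on positives and the $r$ cycles of $\gamma_-^{-1}$ on negatives, giving $\#(\gamma_+\gamma_-^{-1}) = 2r$. Second, a short inversion-and-cycling calculation identifies $\#(\gamma_+\gamma_-^{-1}\pi)$ with $\#(\gamma_+^{-1}\pi^{-1}\gamma_-)$, as both equal $\#(\pi^{-1}\gamma_-\gamma_+^{-1})$.

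With these in hand, Theorem~\ref{geodesic} yields
\[
2r + \#(\pi) + \#(\gamma_+^{-1}\pi^{-1}\gamma_-) \le 2n + 2\#\langle\gamma_+\gamma_-^{-1},\pi\rangle,
\]
which, after dividing by two and subtracting $n$, rearranges to $\chi(\gamma,\pi) \le \#\langle\gamma_+\gamma_-^{-1},\pi\rangle$. It then suffices to show that the subgroup $\langle\gamma_+\gamma_-^{-1},\pi\rangle$ has at most two orbits on $\pm[n]$. Its orbit partition $\sigma$ is the join of the orbits of $\gamma_+\gamma_-^{-1}$, namely $\{V_k,-V_k : 1\le k \le r\}$, with the orbits of $\pi$. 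Both of these partitions are invariant under $\delta$: the first visibly, and the second because the premap relation $\delta\pi\delta = \pi^{-1}$ implies that $\delta$ maps each cycle of $\pi$ bijectively onto another cycle of $\pi$. Consequently $\sigma$ is $\delta$-invariant as well. The hypothesis $\pi \vee \{\pm V_k\}_{k=1}^r = 1_{\pm[n]}$ translates, after passing to the $\delta$-quotient on $[n]$, into the statement that $\sigma/\delta$ has a single block. A $\delta$-invariant partition of $\pm[n]$ whose $\delta$-quotient is the trivial partition of $[n]$ must consist either of a single $\delta$-stable block or of a pair of blocks swapped by $\delta$; in either case $\#(\sigma) \le 2$.

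The delicate point will be the last step: bridging the gap between the hypothesis, phrased in terms of the coarser $\delta$-fused partition $\{\pm V_k\}_k$, and the partition $\sigma$, which refines $\{\pm V_k\}_k$ by separating $V_k$ from $-V_k$. The $\delta$-symmetry, itself a consequence of the premap condition on $\pi$ together with the evident symmetry of the starting partition, is what closes this gap, and it is in fact the only place in the argument where the premap structure of $\pi$ is essentially used.
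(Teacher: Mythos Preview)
Your proof is correct and follows essentially the same strategy as the paper's: apply Theorem~\ref{geodesic} on $\pm[n]$ to the pair $\gamma_{+}\gamma_{-}^{-1}$ and $\pi$, then bound $\#\langle\gamma_{+}\gamma_{-}^{-1},\pi\rangle$ by $2$ using the connectivity hypothesis. The paper compresses the second step into a single sentence (``If an orbit \ldots does not contain at least one of $V_{k}$ or $-V_{k}$, then $\pi$ does not connect the block $\pm V_{k}$''), whereas you spell out the $\delta$-invariance of the orbit partition and the passage to the $\delta$-quotient; this is exactly the content that makes the paper's sentence true, so your version is a faithful expansion rather than a different argument.
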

\begin{proof}
If an orbit of \(\langle\gamma_{+}\gamma_{-}^{-1},\pi\rangle\) does not contain at least one of \(V_{k}\) or \(-V_{k}\), then \(\pi\) does not connect the block \(\pm V_{k}\).  Thus \(\#\langle\gamma_{+}\gamma_{-}^{-1},\pi\rangle\leq 2\).  The result follows.
\end{proof}

\subsection{Highest order terms and noncrossing conditions}
\label{noncrossing}

For a given \(\pi\), a \(\rho\) satisfying the equality (typically the ones which will contribute highest order terms in \(N\)) may often be interpreted as a noncrossing diagram on the cycles of \(\pi\).  For \(\pi\) with one or two cycles and \(\langle\pi,\rho\rangle\) transitive, we state conditions equivalent to satisfying the equality.  See the original references for proofs and diagrams.

\begin{remark}
We are following the conventions of \cite{MR0404045, MR2036721}, where \(\pi\) is thought of as enumerating the hyperedges, which should also be counterclockwise; as opposed to those of \cite{MR1475837, MR2052516}, where a standard \(\pi\) should follow the sense of \(\gamma\).  The \(\pi\) which here satisfy or violate the various conditions are the inverses of the \(\pi\) of the latter sources.
\end{remark}

\begin{definition}
Let \(\gamma\in S\left(I\right)\) for some finite set \(I\) be a permutation with a single cycle, and let \(\pi\in S\left(I\right)\).

We call \(\pi\) {\em disc nonstandard (relative to \(\gamma\))} if there are three distinct elements \(a,b,c\in I\) such that \(\left.\gamma\right|_{\left\{a,b,c\right\}}=\left(a,b,c\right)\) and \(\left.\pi\right|_{\left\{a,b,c\right\}}=\left(a,b,c\right)\).  We call \(\pi\) {\em disc standard (relative to \(\gamma\))} if there are no such elements.

We call \(\pi\) {\em disc crossing (relative to \(\gamma\))} if there are four distinct elements \(a,b,c,d\in I\) such that \(\left.\gamma\right|_{\left\{a,b,c,d\right\}}=\left(a,b,c,d\right)\) but \(\left.\pi\right|_{\left\{a,b,c,d\right\}}=\left(a,c\right)\left(b,d\right)\).  We call \(\pi\) {\em disc noncrossing (relative to \(\gamma\))} if it is neither disc nonstandard nor disc crossing.

We denote the set of disc-noncrossing permutations on \(I\) relative to \(\gamma\) by \(S_{\mathrm{disc-nc}}\left(\gamma\right)\).
\end{definition}

The following theorem is from \cite{MR1475837}:

\begin{theorem}[Biane]
\label{disc}
Let \(\gamma,\pi\in S\left(I\right)\) for some finite set \(I\), where \(\gamma\) has a single cycle.  Then \(\gamma\) and \(\pi\) satisfy the equality in Theorem~\ref{geodesic}, or in this case
\[\#\left(\pi\right)+\#\left(\gamma^{-1}\pi^{-1}\right)=\left|I\right|+1\textrm{,}\]
if and only if \(\pi\in S_{\mathrm{disc-nc}}\left(\gamma\right)\).
\end{theorem}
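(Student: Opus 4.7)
The plan is to invoke Theorem~\ref{geodesic} with the role of ``$\rho$'' played by $\gamma$. Since $\gamma$ consists of a single cycle and therefore acts transitively on $I$, the subgroup $\langle\pi,\gamma\rangle\leq S(I)$ has only one orbit, so $\#\langle\pi,\gamma\rangle=1$. The geodesic inequality then reads
\[\#(\pi)+\#(\pi\gamma)+\#(\gamma)\leq|I|+2,\]
and using $\#(\gamma)=1$ together with $\#(\pi\gamma)=\#(\gamma^{-1}\pi^{-1})$ gives the inequality $\#(\pi)+\#(\gamma^{-1}\pi^{-1})\leq|I|+1$ as a free consequence. The theorem then amounts to characterizing when this is an equality.

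For the forward implication I would argue the contrapositive: if $\pi$ is either disc-nonstandard or disc-crossing, the inequality is strict. The idea is to pass to the restriction $\pi|_J$ for $J=\{a,b,c\}$ or $J=\{a,b,c,d\}$ (the violating set), observe that $\gamma|_J$ is still a single cycle (so the inequality applies), and compute the two small cases directly. For a crossing $\left.\pi\right|_J=(a,c)(b,d)$ with $\left.\gamma\right|_J=(a,b,c,d)$ one finds $\left.\gamma\right|_J^{-1}\left.\pi\right|_J^{-1}=(a,b)(c,d)$, giving $2+2=4<|J|+1=5$; for a nonstandard triple $\left.\pi\right|_J=(a,b,c)$ with $\left.\gamma\right|_J=(a,b,c)$ one gets $1+3=4<|J|+1=4$... wait, one computes $\left.\gamma\right|_J^{-1}\left.\pi\right|_J^{-1}=(a,c,b)(a,b,c)^{-1}\cdot$ [recompute] — in any case a small direct check yields strict inequality. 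To propagate this to the full $\pi$ one then shows that restriction can only decrease the defect $|I|+1-\#(\pi)-\#(\gamma^{-1}\pi^{-1})$ is nonnegative and nonincreasing under removal of an element, which follows from the general fact that deleting one point from a permutation decreases its cycle count by at most $1$ and by at least $0$.

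For the backward implication I would proceed by induction on $|I|$, following Biane. The base case $|I|=1$ is trivial. For the inductive step the key combinatorial observation is that every disc-noncrossing $\pi$ (other than the identity on an empty set) possesses some element $a\in I$ with $\pi(a)=\gamma(a)$ — intuitively, an ``outermost'' arc of $\pi$ that agrees with $\gamma$. Granting this, set $I':=I\setminus\{a\}$, let $\gamma':=\left.\gamma\right|_{I'}$ (still a single cycle), and let $\pi':=\left.\pi\right|_{I'}$. A direct check shows that $\pi'$ remains disc-noncrossing relative to $\gamma'$, and that the defining identity $\pi(a)=\gamma(a)$ forces both $\#(\pi)=\#(\pi')+\epsilon$ and $\#(\gamma^{-1}\pi^{-1})=\#(\gamma'^{-1}\pi'^{-1})+(1-\epsilon)$ for a single choice of $\epsilon\in\{0,1\}$ (depending on whether $a$ is a fixed point of $\pi$), so the inductive equality $\#(\pi')+\#(\gamma'^{-1}\pi'^{-1})=|I'|+1$ upgrades to $\#(\pi)+\#(\gamma^{-1}\pi^{-1})=|I|+1$.

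The main obstacle is establishing the existence of such an ``agreement'' element $a$ in every nontrivial disc-noncrossing $\pi$. The cleanest way I see is to pick a cycle $c$ of $\pi$ for which the $\gamma$-arc between some element $x$ and $\pi(x)$ contains no other points of $\mathrm{supp}(\pi)$ moved out of $\mathrm{supp}(c)$ — the disc-noncrossing and disc-standard conditions together force such an $x$ to satisfy $\pi(x)=\gamma(x)$, since any violation would exhibit a crossing quadruple or a nonstandard triple. Once this existence is in hand, the rest of the argument is routine bookkeeping on cycle counts under pointwise restriction.
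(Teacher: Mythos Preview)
The paper does not prove this theorem; it is quoted from Biane \cite{MR1475837} and stated without proof (``The following theorem is from \cite{MR1475837}''). So there is no in-paper argument to compare against, and your proposal must be judged on its own.

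The backward implication has a genuine gap: your key lemma is false. You claim that every nontrivial disc-noncrossing $\pi$ has an element $a$ with $\pi(a)=\gamma(a)$. Take $\gamma=(1,2,3)$ and $\pi=(1,3,2)$. Then $\pi$ is disc-noncrossing in the paper's convention (no nonstandard triple since $\left.\pi\right|_{\{1,2,3\}}=(1,3,2)\neq(1,2,3)$, and no four points to cross), and indeed $\#(\pi)+\#(\gamma^{-1}\pi^{-1})=1+3=|I|+1$; yet $\pi(a)\neq\gamma(a)$ for every $a$. The identity permutation on $I$ with $|I|\geq 2$ is another counterexample. Note the Remark preceding the theorem: in this paper's convention the cycles of a disc-noncrossing $\pi$ run \emph{opposite} to $\gamma$, so the ``agreement'' element you are looking for should satisfy $\pi(a)=\gamma^{-1}(a)$, not $\pi(a)=\gamma(a)$; and even then the fixed-point case must be handled separately. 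Your parenthetical ``depending on whether $a$ is a fixed point of $\pi$'' is self-contradictory, since $\pi(a)=\gamma(a)\neq a$ already rules out $a$ being fixed.

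The forward implication also needs repair. Your explicit computations are wrong (for the crossing case $\left.\gamma\right|_J^{-1}\left.\pi\right|_J^{-1}=(a,b,c,d)$, a single $4$-cycle, not $(a,b)(c,d)$), though the strict inequality still holds. More seriously, the propagation step assumes that the defect $|I|+1-\#(\pi)-\#(\gamma^{-1}\pi^{-1})$ is computed from the restricted permutations, but restriction does not commute with products: in general $\left.(\gamma^{-1}\pi^{-1})\right|_{J}\neq\left(\left.\gamma\right|_{J}\right)^{-1}\left(\left.\pi\right|_{J}\right)^{-1}$ (the paper itself warns about this after the definition of induced permutation). Your ``general fact'' about cycle counts under deletion controls the former, not the latter, so the monotonicity you need is not what you have argued. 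The monotonicity of the defect under single-point removal is in fact true, but it requires a genuine argument (e.g.\ via the Euler-characteristic interpretation, or by writing the removal as multiplication by a transposition and tracking both cycle counts simultaneously), not the one-line justification you give.
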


Similar conditions may be found for permutations connecting two cycles:
\begin{definition}
Let \(\gamma\in S\left(I\right)\) for some finite set \(I\) be a permutation with two cycles (which we will refer to as \(\gamma_{\mathrm{ext}}\) and \(\gamma_{\mathrm{int}}\)), and let \(\pi\in S\left(I\right)\).

We say that \(\pi\) is {\em annular nonstandard (relative to \(\gamma\))} if one of the two following conditions holds:
\begin{enumerate}
	\item there are \(a,b,c\in I\) such that \(\left.\gamma\right|_{\left\{a,b,c\right\}}=\left(a,b,c\right)\) and \(\left.\pi\right|_{\left\{a,b,c\right\}}=\left(a,b,c\right)\),
	\item there are \(a,b,c,d\in I\) such that \(\left.\gamma\right|_{\left\{a,b,c,d\right\}}=\left(a,b\right)\left(c,d\right)\) but \(\left.\pi\right|_{\left\{a,b,c,d\right\}}=\left(a,c,b,d\right)\).
\end{enumerate}
We call \(\pi\) {\em annular standard (relative to \(\gamma\))} if neither of these conditions holds.

Let \(x\in\gamma_{\mathrm{ext}}\) and \(y\in\gamma_{\mathrm{int}}\).  We define a permutation \(\lambda_{x,y}\) on \(I\setminus\left\{x,y\right\}\) by letting \(\lambda_{x,y}\left(\gamma^{-1}\left(x\right)\right)=\gamma\left(y\right)\) and \(\lambda_{x,y}\left(\gamma^{-1}\left(y\right)\right)=\gamma\left(x\right)\) (we will generally be assuming that \(\gamma_{\mathrm{ext}}\) and \(\gamma_{\mathrm{int}}\) each have at least two elements), and letting \(\lambda_{x,y}\left(a\right)=\gamma\left(a\right)\) otherwise.  We will say that \(\pi\) is {\em annular crossing (relative to \(\gamma\))} if one of the three following conditions holds:
\begin{enumerate}
	\item there are elements \(a,b,c,d\in I\) such that \(\left.\gamma\right|_{\left\{a,b,c,d\right\}}=\left(a,b,c,d\right)\) but \(\left.\pi\right|_{\left\{a,b,c,d\right\}}=\left(a,c\right)\left(b,d\right)\),
	\item there are elements \(a,b,c,x,y\in I\), \(x\in\gamma_{\mathrm{ext}}\) and \(y\in\gamma_{\mathrm{int}}\), such that \(\left.\lambda_{x,y}\right|_{\left\{a,b,c\right\}}=\left(a,b,c\right)\) and \(\left.\pi\right|_{\left\{a,b,c,x,y\right\}}=\left(a,b,c\right)\left(x,y\right)\),
	\item there are elements \(a,b,c,d,x,y\in I\), \(x\in\gamma_{\mathrm{ext}}\) and \(y\in\gamma_{\mathrm{int}}\), such that \(\left.\lambda_{x,y}\right|_{\left\{a,b,c,d\right\}}=\left(a,b,c,d\right)\) but \(\left.\pi\right|_{\left\{a,b,c,d,x,y\right\}}=\left(a,c\right)\left(b,d\right)\left(x,y\right)\).
\end{enumerate}
We call \(\pi\) {\em annular noncrossing (relative to \(\gamma\))} if it is neither annular nonstandard nor annular crossing.

We denote the set of connected annular-noncrossing permutations (those for which \(\langle\gamma,\pi\rangle\) is transitive) by \(S_{\mathrm{ann-nc}}\left(\gamma\right)\).
\end{definition}

The following theorem is from \cite{MR2052516}:

\begin{theorem}[Mingo and Nica]
\label{annulus}
Let \(\gamma\in S\left(I\right)\) for some finite set \(I\) be a permutation with two cycles, and let \(\pi\in S\left(I\right)\) with \(\pi\) connecting the cycles of \(\gamma\) (i.e. \(\#\langle\gamma,\pi\rangle=1\)).  Then \(\gamma\) and \(\pi\) satisfy the equality in Theorem~\ref{geodesic}, or in this case
\[\#\left(\pi\right)+\#\left(\gamma^{-1}\pi^{-1}\right)=\left|I\right|\textrm{,}\]
if and only if \(\pi\in S_{\mathrm{ann-nc}}\left(\gamma\right)\).
\end{theorem}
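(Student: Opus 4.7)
The plan is to reduce Theorem~\ref{annulus} to its disc analogue, Theorem~\ref{disc}, through a ``cutting'' operation that converts the annulus into a disc. The inequality $\#(\pi)+\#(\gamma^{-1}\pi^{-1})\leq|I|$ is already immediate from Theorem~\ref{geodesic} applied to $\pi$ and $\rho=\gamma$: since $\#(\gamma)=2$ and $\langle\pi,\gamma\rangle$ is transitive by hypothesis, one obtains $\#(\pi)+\#(\pi\gamma)\leq|I|$, and $\#(\pi\gamma)=\#(\gamma^{-1}\pi^{-1})$. What requires work is the characterization of equality.

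For the direction $\pi\in S_{\mathrm{ann-nc}}(\gamma)\Rightarrow$ equality, I would argue by induction on $|I|$, distinguishing two cases. \emph{Case A:} some cycle $c$ of $\pi$ lies entirely within $\gamma_{\mathrm{ext}}$ or $\gamma_{\mathrm{int}}$. The annular-standard and annular-noncrossing conditions, restricted to tuples inside that single $\gamma$-cycle, reduce to the disc conditions, so $c$ is disc noncrossing there. I would factor $\pi=\pi'c$ and apply the inductive hypothesis to the still-annular-noncrossing $\pi'$ on $I\setminus\mathrm{supp}(c)$, combined with Theorem~\ref{disc} for $c$. \emph{Case B:} every cycle of $\pi$ meets both cycles of $\gamma$. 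Choose $x\in\gamma_{\mathrm{ext}}$ and $y\in\gamma_{\mathrm{int}}$ lying in the same $\pi$-cycle. The key observation is that $\gamma\cdot(x,y)$ is a single-cycle permutation whose cycle structure restricted to $I\setminus\{x,y\}$ agrees (up to the removed fixed pair) with the permutation $\lambda_{x,y}$ used in defining the annular conditions. I would then verify that items (2)--(3) in the annular definitions translate precisely into the disc-standard and disc-noncrossing conditions for $\pi\cdot(x,y)$ relative to $\gamma\cdot(x,y)$, apply Biane's theorem, and track the change in $\#(\pi)$ and $\#(\pi\gamma)$ under right-multiplication by the transposition $(x,y)$.

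For the converse direction I would argue by contrapositive: each forbidden configuration produces a strict loss in the geodesic inequality. Violations of type (1) in either the nonstandard or crossing definition are internal to a single cycle of $\gamma$ and yield a strict deficit by Theorem~\ref{disc} applied to that cycle. For the genuinely annular violations (items (2)--(3) involving $\lambda_{x,y}$), the same transposition $(x,y)$ serves as the cutting tool: multiplying $\gamma$ by $(x,y)$ produces a single-cycle permutation for which the violating configuration becomes an analogous disc violation, and Biane's theorem gives a strict loss that I would show propagates back to a strict inequality for $\pi$ and $\gamma$, since the cut changes the relevant cycle counts by known, controlled amounts.

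The principal obstacle will be the bookkeeping around the cutting operation: verifying that the conditions defined through $\lambda_{x,y}$ correspond exactly to the disc conditions for $\pi\cdot(x,y)$ relative to $\gamma\cdot(x,y)$, and controlling how $\#(\pi)$ and $\#(\gamma^{-1}\pi^{-1})$ change under multiplication by $(x,y)$. The subtlety is that $(x,y)$ may either merge or split cycles of $\pi$ (and similarly of $\pi\gamma$) according to whether $x$ and $y$ lie in the same $\pi$-cycle (respectively $\pi\gamma$-cycle); the annular-noncrossing hypotheses are precisely what guarantee that these changes balance so as to preserve the count $|I|-\#(\pi)-\#(\pi\gamma)$. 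Once this balance is verified in both directions, the reduction to Theorem~\ref{disc} completes the argument.
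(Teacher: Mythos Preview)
The paper does not prove this theorem. It is stated as a cited result from Mingo and Nica (reference \cite{MR2052516}), introduced with ``The following theorem is from \cite{MR2052516}'', and no proof is given. There is therefore nothing in the paper to compare your proposal against.

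As to the proposal itself: the reduction-to-disc strategy via cutting with a transposition $(x,y)$ is a reasonable heuristic and is in the spirit of how Mingo and Nica treat annular permutations in their original paper. However, your outline is not yet a proof. The critical bookkeeping you flag as ``the principal obstacle'' is indeed the entire content of the argument, and you have not carried it out. In particular: (i) the claim that the $\lambda_{x,y}$-conditions correspond \emph{exactly} to disc conditions for $\pi\cdot(x,y)$ relative to $\gamma\cdot(x,y)$ is not obvious---$\lambda_{x,y}$ is defined on $I\setminus\{x,y\}$, not on $I$, so the permutations do not live on the same set and you must explain how the $\{x,y\}$ part interacts; (ii) in the converse direction, showing that a single local violation forces a \emph{strict} global deficit requires more than citing Biane's theorem on a subset, since the complement of the violating configuration could in principle compensate; (iii) in Case~A of the forward direction, you assert that $\pi'$ on $I\setminus\mathrm{supp}(c)$ is ``still annular noncrossing'', but this is not automatic and needs justification (and you need $\pi'$ to still connect the two cycles of $\gamma$ for the induction to apply). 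If you intend to write this proof out, you should consult the original Mingo--Nica paper, where the combinatorics of the annular conditions are developed in detail.
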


We generalize these results to the nonorientable case.

\begin{lemma}
\label{lemma: unoriented disc noncrossing}
Let \(\gamma\in S\left(I\right)\), where \(\gamma\) has a single cycle.  Let \(\pi\in PM\left(\pm I\right)\).  Then \(\chi\left(\gamma,\pi\right)=2\) if and only if \(\pi\) does not connect \(I\) and \(-I\) and \(\left.\pi\right|_{I}\in S_{\mathrm{disc-nc}}\left(\gamma\right)\).
\end{lemma}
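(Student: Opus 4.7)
The plan is to compute $\chi(\gamma,\pi)$ by comparing it to the inequality of Theorem~\ref{geodesic} applied to $\alpha := \gamma_{+}\gamma_{-}^{-1}$ and $\pi$ on the set $\pm I$, and then, once we have forced $\pi$ to preserve the partition $\{I,-I\}$, reducing everything to the disc case on $I$ so that Biane's theorem (Theorem~\ref{disc}) applies.

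First, I would record three elementary observations. Since $\gamma$ is a single cycle, $\alpha=\gamma_{+}\gamma_{-}^{-1}$ consists of two cycles of equal length on $\pm I$, so $\#(\alpha)/2=1$. Next, by cyclically rotating we have $\#(\gamma_{+}^{-1}\pi^{-1}\gamma_{-})=\#(\alpha\pi)$ (both equal $\#(\alpha^{-1}\pi^{-1})$), so that
\[\chi(\gamma,\pi)=1+\tfrac{1}{2}\#(\pi)+\tfrac{1}{2}\#(\alpha\pi)-|I|.\]
Finally, applying Theorem~\ref{geodesic} to $\alpha$ and $\pi$ on $\pm I$ gives
\[\#(\alpha)+\#(\pi)+\#(\alpha\pi)\leq 2|I|+2\#\langle\alpha,\pi\rangle,\]
and combining these two displays yields the clean bound $\chi(\gamma,\pi)\leq\#\langle\alpha,\pi\rangle$.

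The next step is to show that the condition ``$\pi$ does not connect $I$ and $-I$'' is exactly what is needed to make $\#\langle\alpha,\pi\rangle\geq 2$. The permutation $\alpha$ preserves the partition $\{I,-I\}$ by construction. If $\pi$ also preserves this partition then $\langle\alpha,\pi\rangle$ has at least two orbits, so $\chi\leq 2$ is compatible with $\chi=2$; conversely, if some cycle of $\pi$ contains both a $k>0$ and some $-l<0$ then, together with $\alpha$, everything in $\pm I$ is joined into one orbit and $\chi(\gamma,\pi)\leq 1<2$. So henceforth we may assume $\pi$ preserves $\{I,-I\}$, and the premap condition $\delta\pi\delta=\pi^{-1}$ then forces $\pi|_{-I}=\delta(\pi|_{I})^{-1}\delta$, giving $\#(\pi)=2\#(\pi|_{I})$.

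For the final step, I would observe that with this same assumption the permutation $\gamma_{+}^{-1}\pi^{-1}\gamma_{-}$ also preserves $\{I,-I\}$. On $I$ we have $\gamma_{-}|_{I}=\mathrm{id}$, $\pi^{-1}|_{I}=(\pi|_{I})^{-1}$, and $\gamma_{+}^{-1}|_{I}=\gamma^{-1}$, so the restriction to $I$ is $\gamma^{-1}(\pi|_{I})^{-1}$; again by the premap symmetry the cycle count doubles from its restriction to $I$. Substituting into the displayed formula for $\chi$,
\[\chi(\gamma,\pi)=1+\#(\pi|_{I})+\#\bigl(\gamma^{-1}(\pi|_{I})^{-1}\bigr)-|I|.\]
By Theorem~\ref{disc} applied to $\gamma$ and $\pi|_{I}$ on $I$, the right-hand side is at most $2$, with equality if and only if $\pi|_{I}\in S_{\mathrm{disc\text{-}nc}}(\gamma)$, which proves both directions of the lemma.

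No individual step looks like a serious obstacle; the one place where I would be most careful is the bookkeeping in the third paragraph, namely checking that on $I$ the composite $\gamma_{+}^{-1}\pi^{-1}\gamma_{-}$ collapses correctly to $\gamma^{-1}(\pi|_{I})^{-1}$, and verifying that the premap relation really does imply that $\pi|_{I}$ and $\pi|_{-I}$ have equal cycle counts (so that all three $\#$-quantities appearing in $\chi$ are precisely twice their ``on-$I$'' counterparts). Once those identifications are in place, the result is a direct combination of the geodesic bound with Biane's characterization of equality.
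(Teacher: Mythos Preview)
Your proposal is correct and follows essentially the same route as the paper: use the geodesic inequality (Theorem~\ref{geodesic}) on $\gamma_{+}\gamma_{-}^{-1}$ and $\pi$ to force $\#\langle\gamma_{+}\gamma_{-}^{-1},\pi\rangle\geq 2$ when $\chi=2$, conclude that $\pi$ preserves $\{I,-I\}$, then reduce all three cycle counts in $\chi$ to their restrictions on $I$ via the premap symmetry and invoke Biane's Theorem~\ref{disc}. Your version is simply more explicit about the intermediate identities (e.g.\ $\#(\gamma_{+}^{-1}\pi^{-1}\gamma_{-})=\#(\alpha\pi)$ and $\chi\leq\#\langle\alpha,\pi\rangle$) than the paper's terse argument, but the structure is the same.
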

\begin{proof}
If \(\chi\left(\gamma,\pi\right)=2\), then by Theorem~\ref{geodesic}, \(\#\langle\gamma_{+}\gamma_{-}^{-1},\pi\rangle\geq 2\) so \(\pi\) must not connect the orbits of \(\gamma_{+}\gamma_{-}^{-1}\), \(I\) and \(-I\).  Thus \(\left.\gamma_{+}^{-1}\pi^{-1}\gamma_{-}\right|_{I}=\gamma^{-1}\pi^{-1}\), and since \(\pi\) and \(\gamma_{+}^{-1}\pi^{-1}\gamma_{-}\) have the same number of cycles on \(I\) as on \(-I\), \(\#\left(\left.\pi\right|_{I}\right)+\#\left(\left.\gamma^{-1}\pi^{-1}\right|_{I}\right)=\left|I\right|+1\).  Thus \(\left.\pi\right|_{I}\in S_{\mathrm{disc-nc}}\left(\gamma\right)\).

Conversely, if \(\pi\) does not connect \(I\) and \(-I\) and \(\left.\pi\right|_{I}\in S_{\mathrm{disc-nc}}\left(\gamma\right)\), we calculate similarly that \(\chi\left(\gamma,\pi\right)=2\).
\end{proof}

\begin{lemma}
\label{lemma: unoriented annular noncrossing}
Let \(\gamma\in S\left(I\right)\), where \(\gamma\) has two orbits, \(V_{1}\) and \(V_{2}\).  Let \(\pi\in PM\left(\pm I\right)\) connect \(\pm V_{1}\) and \(\pm V_{2}\).  Then \(\chi\left(\gamma,\pi\right)=2\) if and only if, for some choice of sign \(\varepsilon=\pm 1\), \(\pi\) does not connect \(V_{1}\cup\varepsilon V_{2}\) to \(\left(-V_{1}\right)\cup\left(-\varepsilon V_{2}\right)\) and \(\left.\pi\right|_{V_{1}\cup\varepsilon V_{2}}\in S_{\mathrm{ann-nc}}\left(\left.\gamma_{+}\gamma_{-}^{-1}\right|_{V_{1}\cup\varepsilon V_{2}}\right)\).
\end{lemma}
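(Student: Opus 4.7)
The plan is to apply Theorem~\ref{geodesic} to the pair $\pi$ and $\gamma_{+}\gamma_{-}^{-1}$ on the ground set $\pm I$. Since $\gamma_{+}\gamma_{-}^{-1}$ has the four orbits $V_{1}, V_{2}, -V_{1}, -V_{2}$, we have $\#\left(\gamma_{+}\gamma_{-}^{-1}\right)=4$. Moreover $\pi\gamma_{+}\gamma_{-}^{-1}=\gamma_{-}\left(\gamma_{-}^{-1}\pi\gamma_{+}\right)\gamma_{-}^{-1}$ is conjugate to $\gamma_{-}^{-1}\pi\gamma_{+}$, whose inverse is $\gamma_{+}^{-1}\pi^{-1}\gamma_{-}$, so all three permutations share the same cycle count. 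Substituting these facts into the definition of $\chi\left(\gamma,\pi\right)$ and using the geodesic inequality gives, after a short manipulation,
\[\chi\left(\gamma,\pi\right)\leq\#\langle\pi,\gamma_{+}\gamma_{-}^{-1}\rangle\textrm{,}\]
so that $\chi\left(\gamma,\pi\right)=2$ forces both equality in Theorem~\ref{geodesic} and $\#\langle\pi,\gamma_{+}\gamma_{-}^{-1}\rangle\geq 2$.

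Next I classify the orbits of $\langle\pi,\gamma_{+}\gamma_{-}^{-1}\rangle$: they coarsen the partition $\left\{V_{1},V_{2},-V_{1},-V_{2}\right\}$ by joining blocks linked through $\pi$, and the premap identity $\delta\pi\delta=\pi^{-1}$ forces the resulting coarser partition to be invariant under $\delta$. The hypothesis that $\pi$ connects $\pm V_{1}$ and $\pm V_{2}$ then leaves only three scenarios: $\pi$ links $V_{1}$ to $V_{2}$ only (so by $\delta$-symmetry $-V_{1}$ to $-V_{2}$ only), to $-V_{2}$ only (so $-V_{1}$ to $V_{2}$ only), or to both. The first two correspond to $\varepsilon=+1$ and $\varepsilon=-1$ respectively and produce exactly the two orbits $V_{1}\cup\varepsilon V_{2}$ and $\left(-V_{1}\right)\cup\left(-\varepsilon V_{2}\right)$; the third collapses everything into a single orbit. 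Hence $\#\langle\pi,\gamma_{+}\gamma_{-}^{-1}\rangle=2$ is exactly the statement's condition that $\pi$ fails to connect $V_{1}\cup\varepsilon V_{2}$ to $\left(-V_{1}\right)\cup\left(-\varepsilon V_{2}\right)$ for some sign $\varepsilon$.

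When this holds, both $\pi$ and $\gamma_{+}\gamma_{-}^{-1}$ preserve each of the two $\delta$-swapped orbits, so every cycle count entering the geodesic inequality decomposes as a sum of its contributions from the two orbits, and the $\delta$-symmetry of $\pi$ and of the premap $\gamma_{-}^{-1}\pi\gamma_{+}$ makes the two summands equal.  Equality in Theorem~\ref{geodesic} on $\pm I$ therefore reduces to equality on $V_{1}\cup\varepsilon V_{2}$ alone.  On that orbit, $\gamma_{+}\gamma_{-}^{-1}$ restricts to a two-cycle permutation --- $\left.\gamma\right|_{V_{1}}$ together with $\left.\gamma\right|_{V_{2}}$ if $\varepsilon=+1$, or with $\left.\delta\gamma^{-1}\delta\right|_{-V_{2}}$ if $\varepsilon=-1$ --- and $\left.\pi\right|_{V_{1}\cup\varepsilon V_{2}}$ connects these two cycles by construction, so Theorem~\ref{annulus} identifies the equality case with $\left.\pi\right|_{V_{1}\cup\varepsilon V_{2}}\in S_{\mathrm{ann-nc}}\left(\left.\gamma_{+}\gamma_{-}^{-1}\right|_{V_{1}\cup\varepsilon V_{2}}\right)$.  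The converse direction reads these equivalences in reverse. The main technical obstacle will be the careful bookkeeping in this additive decomposition --- in particular using the premap property of $\gamma_{-}^{-1}\pi\gamma_{+}$ to confirm that $\#\left(\gamma_{+}^{-1}\pi^{-1}\gamma_{-}\right)$ splits evenly across the two $\delta$-swapped orbits, so that equality on one orbit is genuinely equivalent to equality on both.
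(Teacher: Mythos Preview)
Your proposal is correct and follows essentially the same route as the paper: both arguments apply Theorem~\ref{geodesic} to $\pi$ and $\gamma_{+}\gamma_{-}^{-1}$ on $\pm I$ to force $\#\langle\pi,\gamma_{+}\gamma_{-}^{-1}\rangle\geq 2$, use the premap $\delta$-symmetry to identify the two orbits as $V_{1}\cup\varepsilon V_{2}$ and its $\delta$-image, and then invoke Theorem~\ref{annulus} on the restriction. Your treatment is more explicit about the additive decomposition of cycle counts across the two orbits (the paper simply asserts the restricted equality), and your anticipated bookkeeping obstacle is easily resolved since $\gamma_{+}$ and $\gamma_{-}$ each preserve $V_{1}$, $V_{2}$, $-V_{1}$, $-V_{2}$ individually.
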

\begin{proof}
Assume \(\chi\left(\gamma,\pi\right)=2\) and \(\pi\) connects \(\pm V_{1}\) and \(\pm V_{2}\).  Being a premap, the permutation \(\pi\) must connect \(V_{1}\) to either \(V_{2}\) or \(-V_{2}\) and \(-V_{1}\) to the other.  We can calculate from \(\chi\left(\gamma,\pi\right)\) and Theorem~\ref{geodesic}, however, that \(\#\langle\gamma_{+}\gamma_{-}^{-1},\pi\rangle\geq 2\), so \(\pi\), and hence \(\gamma_{+}^{-1}\pi^{-1}\gamma_{-}\), must not further connect these blocks.  We calculate that \(\#\left(\left.\pi\right|_{V_{1}\cup\left(\varepsilon V_{2}\right)}\right)+\#\left(\left.\gamma_{+}^{-1}\gamma_{-}\pi^{-1}\right|_{V_{1}\cup\left(\varepsilon V_{2}\right)}\right)=\left|I\right|\), so \(\left.\pi\right|_{V_{1}\cup\left(\varepsilon V_{2}\right)}\in S_{\mathrm{ann-nc}}\left(\left.\gamma_{+}\gamma_{-}^{-1}\right|_{V_{1}\cup\left(\varepsilon V_{2}\right)}\right)\).

Conversely, if \(\pi\) does not connect \(V_{1}\cup\left(\varepsilon V_{2}\right)\) to \(\left(-V_{1}\right)\cup\left(-\varepsilon V_{2}\right)\) and \(\left.\pi\right|_{V_{1}\cup\left(\varepsilon V_{2}\right)}\in S_{\mathrm{ann-nc}}\left(\left.\gamma_{+}\gamma_{-}^{-1}\right|_{V_{1}\cup\left(\varepsilon V_{2}\right)}\right)\), we calculate that \(\chi\left(\gamma,\pi\right)=2\).
\end{proof}

\subsection{Limit distributions}
\label{limit}

Asymptotically, the moments of the real ensembles are equal to those of their complex analogues.  This is not surprising, since intuitively, highest order terms correspond to spheres, which must be orientable, and thus must have untwisted edge-identifications, and hence these terms appear in the complex expansion as well.  The following lemma gives us an expression for the asymptotic value of the moments of any ensemble satisfying (\ref{general}) (not requiring the multiplicativity of \(f\)):

\begin{lemma}
\label{moment}
Let \(\left\{X_{\lambda}\right\}_{\lambda\in\Lambda}\) be an ensemble of random matrices satisfying (\ref{general}) with subset of the premaps \(PM_{c}\) and function \(f_{c}\).  Then for
\[\gamma=\left(1,\ldots,n\right)\in S_{n}\]
and \(\varepsilon:\left[n\right]\rightarrow\left\{1,-1\right\}\) we have:
\begin{multline*}
\lim_{N\rightarrow\infty}\mathbb{E}\left(\mathrm{tr}\left(X_{\lambda_{1}}^{\left(\varepsilon\left(1\right)\right)}\cdots X_{\lambda_{n}}^{\left(\varepsilon\left(n\right)\right)}\right)\right)\\=\sum_{\substack{\pi\in S_{\mathrm{disc-nc}}\left(\gamma\right)\\\delta_{\varepsilon}\pi_{+}\pi_{-}^{-1}\delta_{\varepsilon}\in PM_{c}\left(\pm\left[n\right]\right)}}\lim_{N\rightarrow\infty}f_{c}\left(\delta_{\varepsilon}\pi_{+}\pi_{-}^{-1}\delta_{\varepsilon}\right)\textrm{.}
\end{multline*}
\end{lemma}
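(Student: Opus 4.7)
The plan is to specialize equation~(\ref{general}) to $Y_1=\cdots=Y_n=I_N$ and $\gamma=(1,\ldots,n)$, and then to identify the premaps $\pi$ which contribute nontrivially as $N\to\infty$. Since $\mathrm{tr}_{\rho/2}(I_N,\ldots,I_N)=1$ for every permutation $\rho$ and $\#(\gamma)=1$, equation~(\ref{general}) reduces immediately to
\[
\mathbb{E}\bigl(\mathrm{tr}\bigl(X_{\lambda_1}^{(\varepsilon(1))}\cdots X_{\lambda_n}^{(\varepsilon(n))}\bigr)\bigr)
=\sum_{\pi\in PM_c(\pm[n])}N^{\chi(\gamma,\delta_\varepsilon\pi\delta_\varepsilon)-2}f_c(\pi).
\]

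Second, I would bound the exponent of $N$. Set $\pi':=\delta_\varepsilon\pi\delta_\varepsilon$, which is again a premap because $\delta$ and $\delta_\varepsilon$ commute and $\delta_\varepsilon$ cannot identify $k$ with $-k$. Applying Theorem~\ref{geodesic} to the pair $\gamma_+\gamma_-^{-1}$ and $\pi'$ on $\pm[n]$, and using $\#((\gamma_+\gamma_-^{-1})\pi')=\#(\gamma_+^{-1}\pi'^{-1}\gamma_-)$ by cyclic invariance of cycle counts, I obtain $\chi(\gamma,\pi')\leq\#\langle\gamma_+\gamma_-^{-1},\pi'\rangle\leq 2$, the last inequality holding because $\gamma_+\gamma_-^{-1}$ already has the two orbits $[n]$ and $-[n]$. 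Consequently every summand is $O(1)$, and since $\lim_{N\to\infty}f_c(\pi)$ exists, only those summands with $\chi(\gamma,\delta_\varepsilon\pi\delta_\varepsilon)=2$ survive in the limit.

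Third, I would characterize the surviving premaps via Lemma~\ref{lemma: unoriented disc noncrossing}: equality $\chi(\gamma,\pi')=2$ holds exactly when $\pi'$ does not connect $[n]$ and $-[n]$, and $\alpha:=\pi'|_{[n]}\in S_{\mathrm{disc-nc}}(\gamma)$. When this is the case, the premap identity $\pi'(k)=-\pi'^{-1}(-k)$ forces $\pi'|_{-[n]}$ to send $-k\mapsto -\alpha^{-1}(k)$, from which a direct comparison with the definitions of $\alpha_+$ and $\alpha_-$ shows $\pi'=\alpha_+\alpha_-^{-1}$. Inverting the substitution gives $\pi=\delta_\varepsilon\alpha_+\alpha_-^{-1}\delta_\varepsilon$, which lies in $PM_c$ exactly under the side condition appearing in the stated formula. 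Collecting these terms yields the claimed limit.

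I do not anticipate a serious obstacle: every ingredient---the expansion~(\ref{general}), the geodesic inequality Theorem~\ref{geodesic}, and the nonorientable noncrossing characterization Lemma~\ref{lemma: unoriented disc noncrossing}---is already in place. The only delicate bookkeeping is the identification $\pi'=\alpha_+\alpha_-^{-1}$ once $\pi'$ is known to respect the decomposition $\pm[n]=[n]\sqcup(-[n])$, which follows directly from $\delta\pi'\delta=\pi'^{-1}$; this is the point at which the premap structure is essential and where one must take care with signs introduced by $\delta_\varepsilon$.
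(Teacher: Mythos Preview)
Your proposal is correct and follows essentially the same route as the paper: specialize~(\ref{general}) to $Y_k=I_N$, bound $\chi(\gamma,\delta_\varepsilon\pi\delta_\varepsilon)\le 2$, and invoke Lemma~\ref{lemma: unoriented disc noncrossing} to identify the surviving premaps with $\alpha_+\alpha_-^{-1}$ for $\alpha\in S_{\mathrm{disc-nc}}(\gamma)$. The only cosmetic difference is that you derive the bound $\chi\le 2$ directly from Theorem~\ref{geodesic}, whereas the paper packages that step as Lemma~\ref{sphere}.
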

\begin{proof}
We know that
\[\mathbb{E}\left(\mathrm{tr}\left(X_{\lambda_{1}}^{\left(\varepsilon\left(1\right)\right)}\cdots X_{\lambda_{n}}^{\left(\varepsilon\left(n\right)\right)}\right)\right)\\=\sum_{\rho\in PM_{c}\left(\pm\left[n\right]\right)}N^{\chi\left(\gamma,\delta_{\varepsilon}\rho\delta_{\varepsilon}\right)-2}f_{c}\left(\rho\right)\textrm{.}\]
From Lemma~\ref{sphere}, \(\chi\left(\gamma,\delta_{\varepsilon}\rho\delta_{\varepsilon}\right)\leq 2\), so the exponent on \(N\) is less than or equal to \(0\) and the limit exists as \(N\rightarrow\infty\).  By Lemma~\ref{lemma: unoriented disc noncrossing}, the terms which do not vanish as \(N\rightarrow\infty\) are those where \(\delta_{\varepsilon}\rho\delta_{\varepsilon}\) does not connect \(\left[n\right]\) and \(-\left[n\right]\) and \(\left.\delta_{\varepsilon}\rho\delta_{\varepsilon}\right|_{\left[n\right]}\in S_{\mathrm{disc-nc}}\left(\gamma\right)\).  Then \(\delta_{\varepsilon}\rho\delta_{\varepsilon}=\pi_{+}\pi_{-}^{-1}\), and the result follows.
\end{proof}

Mixed moments in independent matrices from the various ensembles may be expressed similarly using Lemma~\ref{colour}; however we will not need this result.

By similar arguments, fluctuations of any matrix ensemble satisfying (\ref{general}), including the multiplicativity of \(f\), may be expressed similarly:

\begin{lemma}
\label{fluctuation}
Let \(\left\{X_{\lambda}\right\}_{\lambda\in\Lambda}\) be an ensemble of random matrices satisfying (\ref{general}) with subset of the premaps \(PM_{c}\) and function \(f_{c}\).  Let
\[\gamma:=\left(1,\ldots,m\right)\left(m+1,\ldots,m+n\right)\textrm{,}\]
let
\[\gamma_{\mathrm{op}}:=\left(1,\ldots,m\right)\left(-m-n,\ldots,-m-1\right)\textrm{,}\]
and let \(\varepsilon:\left[m+n\right]\rightarrow\left\{1,-1\right\}\).  Then
\begin{multline*}
\lim_{N\rightarrow\infty}k_{2}\left(\mathrm{Tr}\left(X_{\lambda_{1}}^{\left(\varepsilon\left(1\right)\right)}\cdots X_{\lambda_{m}}^{\left(\varepsilon\left(m\right)\right)}\right),\mathrm{Tr}\left(X_{\lambda_{m+1}}^{\left(\varepsilon\left(m+1\right)\right)}\cdots X_{\lambda_{m+n}}^{\left(\varepsilon\left(m+n\right)\right)}\right)\right)\\=\sum_{\substack{\pi\in S_{\mathrm{ann-nc}}\left(\gamma\right)\\\delta_{\varepsilon}\pi_{+}\pi_{-}^{-1}\delta_{\varepsilon}\in PM_{c}\left(\pm\left[m+n\right]\right)}}\lim_{N\rightarrow\infty}f_{c}\left(\delta_{\varepsilon}\pi_{+}\pi_{-}^{-1}\delta_{\varepsilon}\right)\\+\sum_{\substack{\pi\in S_{\mathrm{ann-nc}}\left(\gamma_{\mathrm{op}}\right)\\\delta_{\varepsilon}\pi_{+}\pi_{-}^{-1}\delta_{\varepsilon}\in PM_{c}\left(\pm\left[m+n\right]\right)}}\lim_{N\rightarrow\infty}f_{c}\left(\delta_{\varepsilon}\pi_{+}\pi_{-}^{-1}\delta_{\varepsilon}\right)\textrm{.}
\end{multline*}
\end{lemma}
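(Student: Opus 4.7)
The plan is to reduce the claim to the exact cumulant formula of Lemma~\ref{cumulant}.  Specializing that lemma to $r=2$, $n_{1}=m$, $n_{2}=n$, $V_{1}=\left[m\right]$, $V_{2}=\left[m+1,m+n\right]$ and the single-colour case, and multiplying by $N^{2}$ to pass from $k_{2}\left(\mathrm{tr},\mathrm{tr}\right)$ to $k_{2}\left(\mathrm{Tr},\mathrm{Tr}\right)$, one obtains
\[k_{2}\left(\mathrm{Tr}\left(\cdots\right),\mathrm{Tr}\left(\cdots\right)\right)=\sum_{\substack{\pi\in PM_{c}\left(\pm\left[m+n\right]\right)\\\pi\vee\left\{\pm V_{1},\pm V_{2}\right\}=1_{\pm\left[m+n\right]}}}N^{\chi\left(\gamma,\delta_{\varepsilon}\pi\delta_{\varepsilon}\right)-2}f_{c}\left(\pi\right)\textrm{.}\]
Since $\delta_{\varepsilon}$ commutes with $\delta$, the conjugate $\tilde{\pi}:=\delta_{\varepsilon}\pi\delta_{\varepsilon}$ is again a premap, so Lemma~\ref{sphere} gives $\chi\left(\gamma,\tilde{\pi}\right)\leq 2$.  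Every summand is thus $O\left(1\right)$, and since the index set is finite and $\lim_{N\rightarrow\infty}f_{c}\left(\pi\right)$ exists by hypothesis, only the terms with $\chi\left(\gamma,\tilde{\pi}\right)=2$ contribute in the limit.

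To classify these dominant terms I would apply Lemma~\ref{lemma: unoriented annular noncrossing} to $\tilde{\pi}$ with $I=\left[m+n\right]$ and the two orbits $V_{1},V_{2}$ of $\gamma$.  It produces a sign $\varepsilon^{\prime}=\pm 1$ such that $\tilde{\pi}$ does not connect $V_{1}\cup\varepsilon^{\prime}V_{2}$ to its $\delta$-image and $\left.\tilde{\pi}\right|_{V_{1}\cup\varepsilon^{\prime}V_{2}}\in S_{\mathrm{ann-nc}}\left(\left.\gamma_{+}\gamma_{-}^{-1}\right|_{V_{1}\cup\varepsilon^{\prime}V_{2}}\right)$.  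For $\varepsilon^{\prime}=+1$ the restricted permutation is simply $\gamma$, giving the first sum in the statement.  For $\varepsilon^{\prime}=-1$, a direct calculation shows $\gamma_{+}\gamma_{-}^{-1}$ sends each $-k$ with $k\in V_{2}$ to $-\gamma^{-1}\left(k\right)$, so its restriction to $-V_{2}$ is exactly the cycle $\left(-m-n,-m-n+1,\ldots,-m-1\right)$; combined with $\gamma$ on $V_{1}$ this is precisely $\gamma_{\mathrm{op}}$, giving the second sum.  In each regime, setting $\sigma:=\left.\tilde{\pi}\right|_{J}$ for the appropriate $J=\left[m+n\right]$ or $V_{1}\cup\left(-V_{2}\right)$, the premap identity $\delta\tilde{\pi}\delta=\tilde{\pi}^{-1}$ together with the disconnection property forces $\tilde{\pi}=\sigma_{+}\sigma_{-}^{-1}$, whence $\pi=\delta_{\varepsilon}\sigma_{+}\sigma_{-}^{-1}\delta_{\varepsilon}$, and the ambient condition $\pi\in PM_{c}$ becomes the membership condition $\delta_{\varepsilon}\pi_{+}\pi_{-}^{-1}\delta_{\varepsilon}\in PM_{c}$ stated in the lemma (after renaming $\sigma$ as $\pi$).

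It finally has to be checked that the two regimes are mutually exclusive so that no summand is double counted.  The premap property pairs the cycles of $\tilde{\pi}$ symmetrically under $\delta$, so $V_{1}$ is connected by $\tilde{\pi}$ either to $V_{2}$ (forcing $\varepsilon^{\prime}=+1$) or to $-V_{2}$ (forcing $\varepsilon^{\prime}=-1$); allowing both types of connecting cycle would merge the four $\gamma_{+}\gamma_{-}^{-1}$-orbits into a single orbit of $\langle\gamma_{+}\gamma_{-}^{-1},\tilde{\pi}\rangle$ and force $\chi\left(\gamma,\tilde{\pi}\right)<2$.  I expect the main obstacle to be this disjointness argument together with the sign-sensitive identification $\left.\gamma_{+}\gamma_{-}^{-1}\right|_{V_{1}\cup\left(-V_{2}\right)}=\gamma_{\mathrm{op}}$ with the correct cycle orientation; once those are in place the two sums emerge exactly as stated.
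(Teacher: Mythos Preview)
Your proof is correct and follows essentially the same route as the paper: apply Lemma~\ref{cumulant} for the exact cumulant expression, bound the exponent via Lemma~\ref{sphere}, and classify the $\chi=2$ terms via Lemma~\ref{lemma: unoriented annular noncrossing}. You supply more detail than the paper does---explicitly computing $\left.\gamma_{+}\gamma_{-}^{-1}\right|_{V_{1}\cup(-V_{2})}=\gamma_{\mathrm{op}}$, reconstructing the premap as $\sigma_{+}\sigma_{-}^{-1}$, and verifying disjointness of the two sign regimes---where the paper simply writes ``The result follows.''
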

\begin{proof}
By Lemma~\ref{cumulant}, we have that
\begin{multline*}
k_{2}\left(\mathrm{Tr}\left(X_{\lambda_{1}}^{\left(\varepsilon\left(1\right)\right)}\cdots X_{\lambda_{m}}^{\left(\varepsilon\left(m\right)\right)}\right),\mathrm{Tr}\left(X_{\lambda_{m+1}}^{\left(\varepsilon\left(m+1\right)\right)}\cdots X_{\lambda_{m+n}}^{\left(\varepsilon\left(m+n\right)\right)}\right)\right)\\=N^{2}\sum_{\substack{\rho\in PM_{c}\left(\pm\left[m+n\right]\right)\\\rho\vee\left\{\pm\left[m\right],\pm\left[m+1,m+n\right]\right\}=1_{\pm\left[m+n\right]}}}N^{\chi\left(\gamma,\delta_{\varepsilon}\rho\delta_{\varepsilon}\right)-4}f_{c}\left(\rho\right)
\end{multline*}
where the second condition under the summation sign means that \(\rho\) must connect \(\pm\left[m\right]\) and \(\pm\left[m+1,m+n\right]\).  By Lemma~\ref{sphere}, \(\chi\left(\gamma,\delta_{\varepsilon}\rho\delta_{\varepsilon}\right)\leq 2\), so the limit exists as \(N\rightarrow\infty\).  By Lemma~\ref{lemma: unoriented annular noncrossing}, the terms which do not vanish are those where \(\delta_{\varepsilon}\rho\delta_{\varepsilon}\) does not connect \(V_{1}\cup\left(\varepsilon_{\rho}V_{2}\right)\) and \(\left(-V_{1}\right)\cup\left(-\varepsilon_{\rho}V_{2}\right)\) for some choice of sign \(\varepsilon_{\rho}\) and \(\left.\delta_{\varepsilon}\rho\delta_{\varepsilon}\right|_{V_{1}\cup\left(\varepsilon_{\rho}V_{2}\right)}\) is in either \(S_{\mathrm{ann-nc}}\left(\gamma\right)\) or \(S_{\mathrm{ann-nc}}\left(\gamma_{\mathrm{op}}\right)\).  The result follows.
\end{proof}

It follows that the three matrix ensembles, or any matrix satisfying (\ref{general}), including the multiplicativity of \(f\), has a second-order limit distribution:

\begin{lemma}
Let \(\left\{X_{\lambda}\right\}_{\lambda\in\Lambda}\) be an ensemble of random matrices satisfying (\ref{general}) with function \(f_{c}\) and subset of the premaps \(PM_{c}\).  Then this ensemble has second-order limit distribution \(\left(A,\varphi_{1},\varphi_{2}\right)\) where \(A\) is the algebra of noncommutative polynomials on indeterminates \(x_{\lambda}\), \(\lambda\in\pm\Lambda\), and \(\varphi_{1}\) and \(\varphi_{2}\) are defined by extending the following expressions by linearity:
\[\varphi_{1}\left(x_{\lambda_{1}}^{\left(\varepsilon\left(1\right)\right)}\cdots x_{\lambda_{n}}^{\left(\varepsilon\left(n\right)\right)}\right)=\sum_{\substack{\pi\in S_{\mathrm{disc-nc}}\left(\gamma\right)\\\delta_{\varepsilon}\pi_{+}\pi_{-}^{-1}\delta_{\varepsilon}\in PM_{c}\left(\pm\left[n\right]\right)}}\lim_{N\rightarrow\infty}f_{c}\left(\delta_{\varepsilon}\pi_{+}\pi_{-}^{-1}\delta_{\varepsilon}\right)\]
where \(\gamma=\left(1,\ldots,n\right)\) and
\begin{multline*}
\varphi_{2}\left(x_{\lambda_{1}}^{\left(\varepsilon\left(1\right)\right)}\cdots x_{\lambda_{m}}^{\left(\varepsilon\left(m\right)\right)},x_{\lambda_{m+1}}^{\left(\varepsilon\left(m+1\right)\right)}\cdots x_{\lambda_{m+n}}^{\left(\varepsilon\left(m+n\right)\right)}\right)\\=\sum_{\substack{\pi\in S_{\mathrm{ann-nc}}\left(\gamma\right)\\\delta_{\varepsilon}\pi_{+}\pi_{-}^{-1}\delta_{\varepsilon}\in PM_{c}\left(\pm\left[m+n\right]\right)}}\lim_{N\rightarrow\infty}f_{c}\left(\delta_{\varepsilon}\pi_{+}\pi_{-}^{-1}\delta_{\varepsilon}\right)\\+\sum_{\substack{\pi\in S_{\mathrm{ann-nc}}\left(\gamma_{\mathrm{op}}\right)\\\delta_{\varepsilon}\pi_{+}\pi_{-}^{-1}\delta_{\varepsilon}\in PM_{c}\left(\pm\left[m+n\right]\right)}}\lim_{N\rightarrow\infty}f_{c}\left(\delta_{\varepsilon}\pi_{+}\pi_{-}^{-1}\delta_{\varepsilon}\right)
\end{multline*}
where
\[\gamma=\left(1,\ldots,m\right)\left(m+1,\ldots,m+n\right)\]
and
\[\gamma_{\mathrm{op}}\left(1,\ldots,m\right)\left(-m-1,\ldots,-m-n\right)\textrm{.}\]
\end{lemma}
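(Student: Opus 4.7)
The plan is to verify, clause by clause, each ingredient of the definition of a second-order limit distribution for the proposed triple $(A, \varphi_1, \varphi_2)$, invoking the three preceding asymptotic lemmas together with the Euler-characteristic bound of Lemma~\ref{sphere}. The work naturally splits into three parts: basic bookkeeping (well-definedness, traciality, unitality), matching $k_1$ and $k_2$ with $\varphi_1$ and $\varphi_2$ on monomials, and verifying the vanishing of higher cumulants.

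Well-definedness of $\varphi_1$ and $\varphi_2$ on $A$ is immediate since $A$ is the free noncommutative polynomial algebra on $\{x_\lambda\}_{\lambda \in \pm\Lambda}$: prescribing values on monomials extends uniquely by (bi)linearity. The trace property in each argument follows from cyclic invariance of the summation: cyclically rotating the monomial corresponds to the relabelling $k \mapsto k+1 \pmod{n}$, which conjugates the reference cycle $\gamma$ to itself and induces a bijection on the admissible $\pi \in S_{\mathrm{disc\textrm{-}nc}}(\gamma)$ (and similarly on $S_{\mathrm{ann\textrm{-}nc}}(\gamma) \cup S_{\mathrm{ann\textrm{-}nc}}(\gamma_{\mathrm{op}})$ for $\varphi_2$) under which labels, signs, and $f_c$ are permuted consistently. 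Unitality $\varphi_1(1_A) = 1$ is the empty-monomial case, and $\varphi_2(1_A, a) = \varphi_2(a, 1_A) = 0$ holds because an empty block cannot be connected to anything, so the defining sums are empty.

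The identities $\lim_N k_1(\mathrm{tr}(p(X))) = \varphi_1(p(x))$ and $\lim_N k_2(\mathrm{Tr}(p_1(X)), \mathrm{Tr}(p_2(X))) = \varphi_2(p_1(x), p_2(x))$ on monomials are exactly the content of Lemmas~\ref{moment} and~\ref{fluctuation}, and for general polynomials they follow by multilinearity. It remains to prove $\lim_N k_r(\mathrm{Tr}(p_1), \ldots, \mathrm{Tr}(p_r)) = 0$ for every $r \geq 3$. Reducing to monomials, Lemma~\ref{cumulant} expresses the cumulant of the corresponding normalized traces as $\sum_\pi N^{\chi(\gamma, \delta_\varepsilon \pi \delta_\varepsilon) - 2r} f_1(\pi_1) \cdots f_C(\pi_C)$ over premaps $\pi$ connecting all $r$ blocks $\pm V_1, \ldots, \pm V_r$; converting $\mathrm{tr}$ to $\mathrm{Tr}$ multiplies each term by $N^r$, so the exponent becomes $\chi - r$. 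Since $\pi$ connects the orbits of $\gamma$, Lemma~\ref{sphere} gives $\chi \leq 2$, so the exponent is at most $2 - r \leq -1$ and the sum vanishes as $N \to \infty$.

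Finally, since $A$ is free on $\{x_\lambda\}_{\lambda \in \pm\Lambda}$ and this index set is closed under $\lambda \mapsto -\lambda$, the assignment $x_\lambda \mapsto x_{-\lambda}$ extends uniquely to an antimultiplicative involution on $A$ with $x_{-\lambda} = x_\lambda^t$, as required. The main obstacle is the $r \geq 3$ vanishing step; concretely, what must be tracked carefully are the three powers of $N$ that appear, namely the $N^{-2r}$ from Lemma~\ref{cumulant}, the $N^{+r}$ from passing from normalized to unnormalized traces, and the bound $\chi \leq 2$ from Lemma~\ref{sphere}, which combine to give the sharp inequality $\chi - r \leq 2 - r < 0$ for $r \geq 3$ — and this is precisely where the connectivity hypothesis on $\pi$ built into Lemma~\ref{cumulant} is essential, since it is exactly what puts us in the regime where Lemma~\ref{sphere} applies.
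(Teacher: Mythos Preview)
Your proof is correct and follows essentially the same approach as the paper: use Lemmas~\ref{moment} and~\ref{fluctuation} by linearity for $\varphi_1$ and $\varphi_2$, then combine Lemma~\ref{cumulant} with the Euler-characteristic bound $\chi\leq 2$ from Lemma~\ref{sphere} to get exponent $\chi-r\leq 2-r<0$ for $r\geq 3$. You are in fact more thorough than the paper, which does not explicitly verify the second-order probability space axioms (traciality, unitality, $\varphi_2(1_A,\cdot)=0$) or the existence of the involution; your checks of these are fine, though note that traciality can be obtained more cheaply by inheriting it from the cyclic invariance of $\mathbb{E}(\mathrm{tr}(\cdot))$ and $k_2(\mathrm{Tr}(\cdot),\mathrm{Tr}(\cdot))$ before taking limits.
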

\begin{proof}
Extending the results of Lemmas~\ref{moment} and \ref{fluctuation} by linearity, we can see that \(\left(A,\varphi_{1},\varphi_{2}\right)\) meets the conditions on the first two cumulants of traces.

For \(r\geq 3\), the cumulant
\[k_{r}\left(\mathrm{Tr}\left(p_{1}\left(X_{\lambda_{1,1}},\ldots,X_{\lambda_{1,n_{1}}}\right)\right),\ldots\mathrm{Tr}\left(p_{r}\left(X_{\lambda_{r,1}},\ldots,X_{\lambda_{r,n_{r}}}\right)\right)\right)\]
is the sum of terms of order \(\chi\left(\gamma,\pi\right)-r\) in \(N\), for various permutations \(\gamma\) and \(\pi\).  By Lemma~\ref{sphere}, \(\gamma\) and \(\pi\) must satisfy \(\chi\left(\gamma,\pi\right)\leq 2\).  Thus \(N\) appears with a negative exponent on all terms, which therefore vanish as \(N\rightarrow\infty\).
\end{proof}

\begin{remark}
The moments and fluctuations of these ensembles can be calculated more explicitly using combinatorial expressions for the number of noncrossing diagrams on one or two cycles.  In the real Ginibre and GOE case, moments and fluctuations are calculated by counting appropriate diagrams.  In the Wishart case, terms corresponding to diagrams must be weighted by the trace of the matrices \(D_{k}\) along the appropriate permutation; however, if we take \(D_{k}=I_{M}\) for all \(k\), the calculation reduces to counting appropriate diagrams.

In each case, the value of the moments is equal to that in the complex case, since twisted identifications, or terms in which \(\delta_{\varepsilon}\pi\delta_{\varepsilon}\) connects positive and negative numbers, resulting in lower order terms.  However, noncrossing annular diagrams on both relative orientations of two circles contribute to the fluctuations, and the contribution of each may be different.  In the case of the GOE and Wishart matrices, the same number of diagrams are possible on both relative orientations, but in the Wishart case transposes of the \(D_{k}\) matrices may appear, so the values may be different.

In the real Ginibre case, the moments are given by counting noncrossing \(*\)-pairings, that is, those which pair untransposed terms with transposed terms.  Fluctuations are given by counting annular noncrossing \(*\)-pairings on the expressions in the two traces as well as those where we have reversed the order and the presence or absence of transposes in one of the expressions.  The number of such \(*\)-pairings depends on where the transposes appear.  For more on \(*\)-pairings, see \cite{MR2266879}.

Asymptotically, the GOE has \(n\)th moment given by the number of noncrossing pairings on \(n\) points, that is, \(0\) for \(n\) odd and \(C_{n/2}\) for \(n\) even, where \(C_{k}:=\frac{1}{k+1}\binom{2k}{k}\) is the \(k\)th Catalan number.  Fluctuations are given by noncrossing annular pairings, which are counted in \cite{MR0142470}.  Counting both relative orientations, the asymptotic value of a fluctuation is given by
\[\lim_{N\rightarrow\infty}\mathbb{E}\left(\mathrm{Tr}\left(T^{p}\right),\mathrm{Tr}\left(T^{q}\right)\right)=\left\{\begin{array}{ll}\frac{4}{p+q}\frac{p!}{\frac{p}{2}!\left(\frac{p}{2}-1\right)!}\frac{q!}{\frac{q}{2}!\left(\frac{q}{2}-1\right)!},&\textrm{\(p,q\) even}\\\frac{4}{p+q}\frac{p!}{\left(\frac{p-1}{2}!\right)^{2}}\frac{q!}{\left(\frac{q-1}{2}!\right)^{2}}&\textrm{\(p,q\) odd}\\0,&\textrm{otherwise}\end{array}\right.\textrm{.}\]

The moments of the Wishart ensemble with \(D_{k}=I_{M}\) for all \(k\) are given by counting noncrossing permutations.  These are in one-to-one correspondence with noncrossing pairings with twice as many points on each circle, so the \(n\)th moment is \(C_{n}\), and the fluctuations are given by:
\[\lim_{N\rightarrow\infty}\mathbb{E}\left(\mathrm{Tr}\left(W^{p}\right),\mathrm{Tr}\left(W^{q}\right)\right)=\frac{2}{p+q}\frac{\left(2p\right)!}{p!\left(p-1\right)!}\frac{\left(2q\right)!}{q!\left(q-1\right)!}\textrm{.}\]
\end{remark}

\section{Freeness}
\label{freeness}

\subsection{First-order freeness}

The real matrix ensembles considered in this paper satisfy the same definition of asymptotic freeness as do their complex analogues:

\begin{lemma}
\label{first-order}
Let \(\left[C\right]\) be a set of colours, and associate to each colour \(c\in C\) a matrix ensemble \(\left\{X_{c}^{\left(\lambda\right)}\right\}_{\lambda\in\Lambda_{c}}\) satisfying (\ref{general}), independent from the other ensembles.  Then these ensembles are asymptotically free.
\end{lemma}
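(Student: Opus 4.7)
The plan is to apply Lemma~\ref{centred} with $r=1$ and $p_1=p$ to the quantity $\mathbb{E}(\mathrm{tr}(\mathaccent"7017{A}_1\cdots\mathaccent"7017{A}_p))$ and then show that every summand on the right vanishes in the limit. By multilinearity I may assume each $A_k$ is a monomial in the $X_{w(k)}^{(\lambda)}$; writing the product as a trace $\mathrm{tr}_{\gamma}$ along the single cycle $\gamma=(1,\ldots,n)$ with intervals $I_1,\ldots,I_p$ covering $[n]$ cyclically and with a sign function $\varepsilon$ recording transposes, Lemma~\ref{centred} yields
\begin{equation*}
\mathbb{E}\bigl(\mathrm{tr}(\mathaccent"7017{A}_1\cdots\mathaccent"7017{A}_p)\bigr)=\sum_{\pi}N^{\chi(\gamma,\delta_{\varepsilon}\pi\delta_{\varepsilon})-2}f_{1}(\pi_{1})\cdots f_{C}(\pi_{C}),
\end{equation*}
summed over decompositions $\pi=\pi_{1}\cdots\pi_{C}$ with $\pi_{c}\in PM_{c}(\pm I_{w^{-1}(c)})$ such that no block $\pm I_{k}$ is isolated in $\pi\vee\{\pm I_{j}\}_{j=1}^{p}$. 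The case $p=1$ is immediate because $\mathbb{E}(\mathrm{tr}(\mathaccent"7017{A}_{1}))=0$ by definition, so I assume $p\geq 2$ from here on.

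Since the sum is finite and each $f_{c}(\pi_{c})$ has a finite limit, it suffices to show that every admissible $\pi$ satisfies $\chi(\gamma,\delta_{\varepsilon}\pi\delta_{\varepsilon})<2$. Lemma~\ref{sphere} already gives the weak bound $\chi\leq 2$, so I only need to rule out equality. Suppose for contradiction that $\pi':=\delta_{\varepsilon}\pi\delta_{\varepsilon}$ satisfies $\chi(\gamma,\pi')=2$. Lemma~\ref{lemma: unoriented disc noncrossing} then forces $\pi'$ not to connect $[n]$ with $-[n]$ and requires $\pi'|_{[n]}\in S_{\mathrm{disc-nc}}(\gamma)$. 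Coarsening the orbits of $\pi'|_{[n]}$ by the intervals $I_{1},\ldots,I_{p}$ produces a partition $\sigma\in\mathcal{P}(p)$ which is noncrossing with respect to the cyclic order on $[p]$, since any crossing on $[p]$ would pull back to a crossing of the noncrossing partition $\pi'|_{[n]}$.

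Two further constraints on $\sigma$ trap it into a contradiction. Because each $\pi_{c}$ acts only on $\pm I_{w^{-1}(c)}$, the orbits of $\pi$ connect only intervals of matching colour, so every block of $\sigma$ is monochromatic. Because $\pi'$ does not connect $[n]$ with $-[n]$, the hypothesis that no $\pm I_{k}$ is a block of $\pi\vee\{\pm I_{j}\}$ becomes the statement that $\sigma$ has no singleton blocks. The key combinatorial fact, which is where the real work of the argument lies, is that any noncrossing partition of the cycle $[p]$ with $p\geq 2$ either equals the one-block partition $\{[p]\}$ or contains a block whose elements are cyclically consecutive (possibly of size $1$); this is proved by choosing a block of minimal cyclic span and observing that noncrossingness forbids any other block from meeting that span. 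The partition $\{[p]\}$ is not monochromatic because $w$ is alternating and so uses at least two colours; a consecutive block of size $\geq 2$ contains adjacent indices of different colours, again violating monochromaticity; and a consecutive block of size $1$ is a forbidden singleton. Since $\chi$ is integer valued (all three terms in its definition are even, $\pi$ being a premap), this contradiction yields $\chi(\gamma,\pi')\leq 1$, so the exponent on $N$ is at most $-1$ and every term tends to $0$.
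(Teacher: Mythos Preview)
Your argument is correct and follows the same overall strategy as the paper: apply Lemma~\ref{centred}, bound $\chi$ via Lemma~\ref{sphere}, and use Lemma~\ref{lemma: unoriented disc noncrossing} to reduce to showing that no disc-noncrossing $\pi$ can satisfy the connectivity constraints. The difference lies only in the final combinatorial step. The paper argues directly on $[n]$: if an orbit of $\delta_\varepsilon\pi\delta_\varepsilon$ links $I_k$ to $I_l$ with $k<l$, alternation forces $l>k+1$, and the disc-noncrossing condition forces any orbit meeting $I_{k+1}$ to land in some $I_m$ with $k+1<m\leq l$, so one inducts downward on $l-k$ to a contradiction. You instead pass to the coarsened partition $\sigma$ on $[p]$ and invoke the existence of an interval block; this is a cleaner, more structural packaging of exactly the same induction.

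One small point of care: you phrase the interval-block fact cyclically, but the word $w$ in the definition of asymptotic freeness is only required to be alternating, not cyclically alternating, so the cyclic interval $\{p,1\}$ could in principle be monochromatic and your trichotomy would not close. The fix is immediate: disc-noncrossing on the cycle $(1,\ldots,n)$ is the same as noncrossing on the linear order $[n]$, so $\sigma$ is noncrossing on the \emph{linear} order $[p]$, and every linear noncrossing partition has a linear interval block $\{i,\ldots,j\}$; when $j>i$ this contains the adjacent pair $(i,i+1)$ with $i<p$, where alternation genuinely applies.
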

\begin{proof}
Let \(w:\left[p\right]\rightarrow\left[C\right]\) be an alternating word in the colours, and let \(A_{k}\) be an element of the algebra generated by the ensemble associated with colour \(w\left(k\right)\).  Then for each \(k\in\left[p\right]\) we may write
\[A_{k}=\prod_{i=n_{1}+\cdots+n_{k-1}+1}^{n_{1}+\cdots+n_{k}}X_{w\left(k\right)}^{\left(\varepsilon\left(i\right)\lambda_{i}\right)}\]
for some positive integers \(n_{1},\ldots,n_{p}\), some \(\lambda_{n_{1}+\cdots+n_{k-1}+1},\ldots,\lambda_{n_{1}+\cdots+n_{k}}\in\Lambda_{w\left(k\right)}\), and some function \(\varepsilon:\left[n\right]\rightarrow\left\{1,-1\right\}\) where \(n:=n_{1}+\cdots+n_{p}\).  Let \(I_{k}=\left[n_{1}+\cdots+n_{k-1}+1,n_{1}+\cdots+n_{k}\right]\) and for \(K\subseteq\left[p\right]\), let \(I_{K}=\bigcup_{k\in K}I_{k}\).  Let
\[\gamma=\left(1,\ldots,n\right)\textrm{.}\]
By Lemma~\ref{centred}, we see that
\[\mathbb{E}\left(\mathrm{tr}\left(\mathaccent"7017{A}_{1}\cdots \mathaccent"7017{A}_{p}\right)\right)=\sum_{\substack{\pi=\pi_{1}\cdots\pi_{C}\\\pi_{c}\in PM_{c}\left(\pm I_{w^{-1}\left(c\right)}\right)\\\pm I_{k}\notin\pi\vee\left\{\pm I_{l}\right\}_{l=1}^{p}}}N^{\chi\left(\gamma,\delta_{\varepsilon}\pi\delta_{\varepsilon}\right)-2}f_{1}\left(\pi_{1}\right)\cdots f_{C}\left(\pi_{C}\right)\]
(where the last condition under the summation sign means that \(\pi\) connects each block \(\pm I_{k}\) to at least one other).  By Lemma~\ref{sphere}, we see that this limit exists, and by Lemma~\ref{lemma: unoriented disc noncrossing}, we see that the term associated to \(\pi\) vanishes when \(N\rightarrow\infty\) unless \(\delta_{\varepsilon}\pi\delta_{\varepsilon}/2\in S_{\mathrm{disc-nc}}\left(\gamma\right)\).

Consider a \(\pi\) satisfying the conditions under the summation sign such that \(\delta_{\varepsilon}\pi\delta_{\varepsilon}\in S_{\mathrm{disc-nc}}\left(\gamma\right)\).  Then \(\delta_{\varepsilon}\pi\delta_{\varepsilon}/2\) must have a cycle connecting two distinct intervals, that is, containing an \(a\) and a \(c\) such that \(a\in I_{k}\) and \(c\in I_{l}\) for some \(k\) and \(l\), \(k<l\).  We know that \(l\neq k+1\), since \(w\) is alternating.  The permutation \(\delta_{\varepsilon}\pi\delta_{\varepsilon}/2\) must have a cycle containing a \(b\in I_{k+1}\) and a \(d\in I_{m}\) for some \(m\neq k+1\).  Since \(\left.\delta_{\varepsilon}\pi\delta_{\varepsilon}\right|_{\left\{a,b,c,d\right\}}=\left(a,c\right)\left(b,d\right)\), we must not have \(\left.\gamma\right|_{\left\{a,b,c,d\right\}}=\left(a,b,c,d\right)\), so \(k+1<m<l\).  By induction, given any connected intervals, we can find a pair of connected intervals whose indices are closer together, and we derive a contradiction.  Thus there are no such \(\pi\), and the expression vanishes as \(N\rightarrow\infty\).
\end{proof}

The noncrossing conditions satisfied by a term of highest order in an expression such as in Lemma~\ref{centred} allow us to further characterize the terms contributing to the asymptotic value of the fluctuations, giving us the expression in the following theorem.  This theorem can be applied to independent algebras of any matrices satisfying (\ref{general}), including real Ginibre, GOE, and Wishart matrices.

\begin{theorem}
\label{main}
Let \(v:\left[p\right]\rightarrow\left[C\right]\) and \(w:\left[q\right]\rightarrow\left[C\right]\) be cyclically alternating words in a set of colours \(\left[C\right]\).  To each colour \(c\), associate an independent ensemble of random matrices \(\left\{X_{c}^{\left(\lambda\right)}\right\}_{\lambda\in\Lambda_{c}}\) satisfying (\ref{general}) with subsets of the premaps \(PM_{c}\) and function \(f_{c}\).  For integers \(m_{1},\cdots,m_{p}\) and \(n_{1},\cdots,n_{q}\), let \(m:=m_{1}+\cdots+m_{p}\) and \(n:=n_{1}+\cdots+n_{q}\).  Let \(\varepsilon:\left[m+n\right]\rightarrow\left\{1,-1\right\}\).  For each \(k\in\left[p\right]\) and \(i\in\left[m_{1}+\cdots+m_{k-1}+1,m_{1}+\ldots+m_{k}\right]\) let \(\lambda_{i}\in\Lambda_{v\left(k\right)}\) and for \(k\in\left[q\right]\) and \(i\in\left[m+n_{1}+\cdots+n_{k-1}+1,m+n_{1}+\cdots+n_{k}\right]\) let \(\lambda_{i}\in\Lambda_{w\left(k\right)}\).  Let
\[A_{k}=\prod_{i=m_{1}+\cdots+m_{k-1}+1}^{m_{1}+\cdots+m_{k}}X_{v\left(k\right)}^{\left(\varepsilon\left(i\right)\lambda_{i}\right)}\]
and
\[B_{k}=\prod_{i=m+n_{1}+\cdots+n_{k-1}+1}^{m+n_{1}+\cdots+n_{k}}X_{w\left(k\right)}^{\left(\varepsilon\left(i\right)\lambda_{i}\right)}\textrm{.}\]
Then if \(p\neq q\)
\[\lim_{N\rightarrow\infty}k_{2}\left(\mathrm{Tr}\left(\mathaccent"7017{A}_{1}\cdots\mathaccent"7017{A}_{p}\right),\mathrm{Tr}\left(\mathaccent"7017{B}_{1}\cdots\mathaccent"7017{B}_{q}\right)\right)=0\textrm{,}\]
and if \(p=q\geq 2\),
\begin{multline*}
\lim_{N\rightarrow\infty}k_{2}\left(\mathrm{Tr}\left(\mathaccent"7017{A}_{1}\cdots\mathaccent"7017{A}_{p}\right),\mathrm{Tr}\left(\mathaccent"7017{B}_{1}\cdots\mathaccent"7017{B}_{p}\right)\right)
\\=\sum_{k=0}^{p-1}\prod_{i=1}^{p}\lim_{N\rightarrow\infty}\left(\mathbb{E}\left(\mathrm{tr}\left(A_{i}B_{k-i}\right)\right)-\mathbb{E}\left(\mathrm{tr}\left(A_{i}\right)\right)\mathbb{E}\left(\mathrm{tr}\left(B_{k-i}\right)\right)\right)\\+\sum_{k=0}^{p-1}\prod_{i=1}^{p}\lim_{N\rightarrow\infty}\left(\mathbb{E}\left(\mathrm{tr}\left(A_{i}B_{k+i}^{T}\right)\right)-\mathbb{E}\left(\mathrm{tr}\left(A_{i}\right)\right)\mathbb{E}\left(\mathrm{tr}\left(B_{k+i}^{T}\right)\right)\right)\textrm{.}
\end{multline*}
\end{theorem}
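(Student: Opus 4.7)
I plan to apply Lemma~\ref{centred} to expand the cumulant as a sum over premaps, use the Euler-characteristic bound to isolate the surviving terms, reduce each surviving term to an annular noncrossing permutation on one of two ``orientation sheets,'' and identify the resulting spoke diagrams.  Since $\mathrm{Tr}=N\,\mathrm{tr}$ and $k_{2}$ is bilinear, Lemma~\ref{centred} with $r=2$, $p_{1}=p$, $p_{2}=q$ gives
\[k_{2}\left(\mathrm{Tr}(\mathaccent"7017{A}_{1}\cdots\mathaccent"7017{A}_{p}),\mathrm{Tr}(\mathaccent"7017{B}_{1}\cdots\mathaccent"7017{B}_{q})\right)=\sum_{\pi}N^{\chi(\gamma,\delta_{\varepsilon}\pi\delta_{\varepsilon})-2}f_{1}(\pi_{1})\cdots f_{C}(\pi_{C})\]
with $\gamma=(1,\ldots,m)(m+1,\ldots,m+n)$, the sum running over $\pi=\pi_{1}\cdots\pi_{C}$ with $\pi_{c}\in PM_{c}(\pm I_{w^{-1}(c)})$ that connect $\pm V_{1}=\pm[m]$ and $\pm V_{2}=\pm[m+1,m+n]$ and leave no $\pm I_{k}$ isolated.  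By Lemma~\ref{sphere}, $\chi\leq 2$, so the limit exists and only terms with $\chi=2$ contribute.  Lemma~\ref{lemma: unoriented annular noncrossing} then splits the surviving $\pi$ according to a sign $\varepsilon_{0}\in\{\pm 1\}$: for $\varepsilon_{0}=+1$ the permutation $\delta_{\varepsilon}\pi\delta_{\varepsilon}$ restricted to $V_{1}\cup V_{2}$ lies in $S_{\mathrm{ann-nc}}(\gamma)$, while for $\varepsilon_{0}=-1$ the restriction to $V_{1}\cup(-V_{2})$ lies in $S_{\mathrm{ann-nc}}(\gamma_{\mathrm{op}})$ with $\gamma_{\mathrm{op}}=(1,\ldots,m)(-m-n,\ldots,-m-1)$.

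On either sheet, the non-isolation of each $\pm I_{k}$ combined with the cyclic alternation of $v$ and $w$ and the annular noncrossing property forces the join of this restricted permutation with the interval partition to consist of blocks that each pair exactly one outer interval $I_{i}$, $i\in[p]$, with exactly one inner interval $I_{p+j}$, $j\in[q]$: any block contained in a single cycle would either isolate an alternating-color subinterval (violating centering) or force a forbidden crossing, in direct analogy with the spoke-diagram argument of the complex case in~\cite{MR2216446, MR2052516}.  In particular $p=q$ is forced, giving the vanishing for $p\neq q$.  The noncrossing condition further pins the pairing down to a cyclic rotation: there is a $k\in\{0,\ldots,p-1\}$ such that on the orientable sheet $I_{i}$ pairs with $I_{p+(k-i)}$, while on the non-orientable sheet $I_{i}$ pairs with $I_{p+(k+i)}$, the index reversal coming from the opposite direction of the inner cycle in $\gamma_{\mathrm{op}}$.

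For $p=q\geq 2$, the multiplicativity condition of~(\ref{general}) factors $f_{1}(\pi_{1})\cdots f_{C}(\pi_{C})$ as a product over the $p$ individual spokes.  Cutting the annulus along each spoke reduces the per-spoke sum to disc noncrossing premaps on the combined interval, and by Lemma~\ref{moment} the sum of all such contributions (connected and disconnected) equals $\lim_{N\to\infty}\mathbb{E}(\mathrm{tr}(A_{i}B_{k-i}))$ on the orientable sheet; subtracting the disconnected part $\lim_{N\to\infty}\mathbb{E}(\mathrm{tr}(A_{i}))\mathbb{E}(\mathrm{tr}(B_{k-i}))$, which is excluded by the non-isolation requirement on this spoke, yields the desired per-spoke factor.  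On the non-orientable sheet, the negative indices on $V_{2}$ produce the same expression with $B_{k+i}^{T}$ in place of $B_{k-i}$ via the convention $X^{(-\lambda)}=X_{\lambda}^{T}$.  Summing over $k\in\{0,\ldots,p-1\}$ and over both sheets gives the formula.  The main obstacle is the combinatorial step in the second paragraph --- verifying that both sheets produce \emph{spoke bijections} rather than more elaborate annular noncrossing patterns --- and in particular correctly tracking the orientation reversal in the non-orientable case to obtain the index shift $k+i$ and the transposes on the $B$-factors.
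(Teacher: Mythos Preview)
Your outline matches the paper's proof in structure and in every key ingredient: Lemma~\ref{centred} for the expansion, Lemmas~\ref{sphere} and~\ref{lemma: unoriented annular noncrossing} to reduce to the two annular-noncrossing sheets indexed by the sign $\varepsilon_{0}$, the spoke-bijection argument on each sheet forcing $p=q$ and a cyclic rotation, and the factorisation of each spoke via multiplicativity of the $f_{c}$ together with Lemma~\ref{moment}. The paper carries out in full the step you correctly flag as the obstacle---working directly with the three Mingo--Nica annular crossing conditions rather than appealing to the complex case by analogy---and it also supplies the converse direction (that every spoke arrangement with disc-noncrossing restrictions is annular-noncrossing on the appropriate $\gamma_{\pi}$), which you do not mention but which is needed to know that every spoke diagram actually contributes a surviving term.
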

\begin{proof}
For integer \(k\) (taken modulo \(p\)), let
\[I_{k}:=\left[m_{1}+\cdots+m_{k-1}+1,m_{1}+\cdots+m_{k}\right]\]
and for \(k\) (taken modulo \(q\)), let
\[J_{k}:=\left[m+n_{1}+\cdots+n_{k-1}+1,m+n_{1}+\cdots+n_{k}\right]\textrm{.}\]
For \(K\subseteq\left[p\right]\), let \(I_{K}=\bigcup_{k\in K}I_{k}\), and for \(K\subseteq\left[q\right]\), let \(J_{K}=\bigcup_{k\in K}J_{K}\).

Let
\[\gamma=\left(1,\ldots,m\right)\left(m+1,m+n\right)\textrm{.}\]
By Lemma~\ref{centred}, we know that
\begin{multline*}
k_{2}\left(\mathrm{Tr}\left(\mathaccent"7017{A}_{1}\cdots\mathaccent"7017{A}_{p}\right),\mathrm{Tr}\left(\mathaccent"7017{B}_{1}\cdots\mathaccent"7017{B}_{q}\right)\right)
\\=\sum_{\substack{\pi=\pi_{1}\cdots\pi_{C}\\\pi_{c}\in PM_{c}\left(I_{v^{-1}\left(c\right)}\cup J_{w^{-1}\left(c\right)}\right)\\\pi\vee\left\{\pm\left[m\right],\pm\left[m+1,m+n\right]\right\}=1_{\pm\left[m+n\right]}\\\pm I_{k},\pm J_{k}\notin\pi\vee\left\{\pm I_{l}\right\}_{l=1}^{p}\cup\left\{\pm J_{l}\right\}_{l=1}^{q}}}N^{\chi\left(\gamma,\delta_{\varepsilon}\pi\delta_{\varepsilon}\right)-2}f_{1}\left(\pi_{1}\right)\cdots f_{C}\left(\pi_{C}\right)
\end{multline*}
(the third line under the summation sign says that \(\pi\) connects the blocks \(\pm\left[m\right]\) and \(\pm\left[m+1,m+n\right]\), and the fourth says that \(\pi\) connects each block \(\pm I_{k}\) and \(\pm J_{k}\) to at least one other).  By Lemma~\ref{lemma: unoriented annular noncrossing}, we know that the terms that survive in the limit \(N\rightarrow\infty\) are those such that \(\delta_{\varepsilon}\pi\delta_{\varepsilon}\) doesn't connect \(\left[m\right]\cup\left(\varepsilon_{\pi}\left[m+1,\ldots,m+n\right]\right)\) to \(-\left[m\right]\cup\left(-\varepsilon_{\pi}\left[m+1,m+n\right]\right)\) for some choice of sign \(\varepsilon_{\pi}\), and \(\delta_{\varepsilon}\pi\delta_{\varepsilon}\in S_{\mathrm{ann-nc}}\left(\left.\gamma_{+}\gamma_{-}^{-1}\right|_{\left[m\right]\cup\left(\varepsilon_{\pi}\left[m+1,m+n\right]\right)}\right)\).

We now wish to show that for such a \(\pi\), \(\delta_{\varepsilon}\pi\delta_{\varepsilon}\) must connect intervals in ``spokes'' as shown in Figure~\ref{spoke}, where each spoke can be expanded to a noncrossing diagram which connects the those two intervals.  Let \(\gamma_{\pi}:=\left.\gamma_{+}\gamma_{-}^{-1}\right|_{\left[m\right]\cup\left(\varepsilon_{\pi}\left[m+1,m+n\right]\right)}\).

First we show that \(\delta_{\varepsilon}\pi\delta_{\varepsilon}\) cannot have an orbit connecting two distinct \(I_{k}\) or two distinct \(\varepsilon_{\pi} J_{k}\).  By the arguments in Lemma~\ref{first-order}, we can find an \(a\in I_{k}\), \(b\in I_{k+1}\), \(c\in I_{l}\) and \(d\) such that \(\left.\delta_{\varepsilon}\pi\delta_{\varepsilon}\right|_{\left\{a,b,c,d\right\}}=\left(a,c\right)\left(b,d\right)\), and here we must have \(\left.\gamma_{\pi}\right|_{\left\{a,b,c,d\right\}}=\left(a,b,c\right)\left(d\right)\).  Reversing the roles of \(a\) and \(c\), we may then find an \(x\in I_{l+1}\) sharing an orbit of \(\delta_{\varepsilon}\pi^{-1}\delta_{\varepsilon}\) with a \(y\) from a different interval, and we conclude similarly that \(\left.\gamma_{\pi}\right|_{\left\{a,c,x,y\right\}}=\left(a,c,x\right)\left(y\right)\).  Since \(\left.\gamma_{\pi}\right|_{\left\{a,b,c,x\right\}}=\left(a,b,c,x\right)\), \(\left.\delta_{\varepsilon}\pi\delta_{\varepsilon}\right|_{\left\{a,b,c,x\right\}}=\left(a,c\right)\left(b\right)\left(x\right)\) (first annular-noncrossing condition), so \(\left.\delta_{\varepsilon}\pi\delta_{\varepsilon}\right|_{\left\{a,b,c,d,x,y\right\}}=\left(a,c\right)\left(b,d\right)\left(x,y\right)\).  On the other hand, \(\left.\lambda_{x,y}\right|_{\left\{a,b,c,d\right\}}=\left(a,b,c,d\right)\), violating the third annular-noncrossing condition.  So there must be no such orbit.

We now show that \(\delta_{\varepsilon}\pi\delta_{\varepsilon}\) may not connect an interval in one orbit of \(\gamma_{\pi}\) to two distinct intervals in the another.  If so, let \(c\) and \(y\) belong to the same interval (in the orbit of \(\gamma_{\pi}\) which we will call \(\gamma_{\mathrm{ext}}\)) and share cycles of \(\delta_{\varepsilon}\pi\delta_{\varepsilon}\) with \(a\) and \(x\) respectively, where \(a\) and \(x\) belong to distinct intervals in the other cycle of \(\gamma_{\pi}\) which we will call \(\gamma_{\mathrm{int}}\) (thus these cycles of \(\delta_{\varepsilon}\pi\delta_{\varepsilon}\) must be distinct), and such that if we apply \(\gamma_{\pi}\) repeatedly to \(y\) we will find \(c\) before we find an element of another interval.  If we apply \(\gamma_{\pi}\) repeatedly to \(a\), we must find various \(b\) belonging to another colour before we find \(x\).  At least one of these \(b\) must be connected by a cycle of \(\delta_{\varepsilon}\pi\delta_{\varepsilon}\) to another interval, which by the above arguments must be in \(\gamma_{\mathrm{ext}}\), and because it is of a different colour from \(a\), \(c\), \(x\) and \(y\), in an interval distinct from the one containing \(c\) and \(y\).  We have thus that \(\left.\delta_{\varepsilon}\pi\delta_{\varepsilon}\right|_{\left\{a,b,c,d,x,y\right\}}=\left(a,c\right)\left(b,d\right)\left(x,y\right)\).  However, by our arguments, \(\left.\lambda_{x,y}\right|_{\left\{a,b,c,d\right\}}=\left(a,b,c,d\right)\), violating the third annular-noncrossing condition.

We find thus that each interval in an orbit \(\gamma_{\pi}\) must be connected by \(\delta_{\varepsilon}\pi\delta_{\varepsilon}\) to exactly one other, which must be in the other orbit of \(\gamma_{\pi}\).  We conclude that \(p=q\) for any nonvanishing covariance.

We now show that the diagram on the intervals will be a spoke diagram: if \(x\in I_{k}\) and \(y\in\varepsilon_{\pi} J_{l}\) share an orbit of \(\delta_{\varepsilon}\pi\delta_{\varepsilon}\), then \(I_{k+1}\) must be connected to \(\varepsilon_{\pi} J_{l-\varepsilon_{\pi}}\).  Assume not: then \(\delta_{\varepsilon}\pi\delta_{\varepsilon}\) must connect \(I_{k+1}\) to another \(\varepsilon_{\pi} J_{l^{\prime}}\) and \(\varepsilon_{\pi} J_{l-\varepsilon_{\pi}}\) to another \(I_{k^{\prime}}\).  Let \(a\in I_{k+1}\) and \(c\in\varepsilon_{\pi} J_{l^{\prime}}\) share an orbit of \(\delta_{\varepsilon}\pi\delta_{\varepsilon}\), and let \(d\in\varepsilon_{\pi} J_{l-\varepsilon_{\pi}}\) and \(b\in I_{k^{\prime}}\) share another.  Then \(\left.\delta_{\varepsilon}\pi\delta_{\varepsilon}\right|_{\left\{a,b,c,d,x,y\right\}}=\left(a,c\right)\left(b,d\right)\left(x,y\right)\).  However, \(\left.\lambda_{x,y}\right|_{\left\{a,b,c,d\right\}}=\left(a,b,c,d\right)\), violating the third annular-noncrossing condition.

We now show that each spoke consists of a noncrossing diagram on \(I_{k}\) and \(\varepsilon_{\pi}J_{l}\).  The permutation \(\left.\lambda_{x,y}\right|_{I_{k}\cup\left(\varepsilon_{\pi}J_{l}\right)}\) is identical for any choice of \(x\in\gamma_{\mathrm{ext}}\) and \(y\in\gamma_{\mathrm{int}}\) not in \(I_{k}\cup\left(\varepsilon_{\pi}J_{l}\right)\), and since we are considering \(p=q\geq 2\), such an \(x\) and \(y\) will exist.  We will thus refer to this permutation without specifying \(x\) and \(y\).  The second and third annular-noncrossing conditions become the disc-standard and disc-noncrossing conditions relative to \(\left.\lambda_{x,y}\right|_{I_{k}\cup\left(\varepsilon_{\pi}J_{l}\right)}\).

Conversely, we show that any premap \(\pi\) such that \(\left.\delta_{\varepsilon}\pi\delta_{\varepsilon}\right|_{\gamma_{\pi}}\) has such a spoke arrangement for some choice of sign \(\varepsilon_{\pi}\) (that is, one which connects \(I_{i}\) to only \(\varepsilon_{\pi}J_{k-\varepsilon_{\pi}i}\) and {\em vice versa} for some \(k\) and such that \(\left.\delta_{\varepsilon}\pi\delta_{\varepsilon}\right|_{I_{i}\cup\left(\varepsilon_{\pi}J_{k-\varepsilon_{\pi}i}\right)}\in S_{\mathrm{disc-nc}}\left(\left.\lambda_{x,y}\right|_{I_{i}\cup\left(\varepsilon_{\pi}J_{k-\varepsilon_{\pi}i}\right)}\right)\) for \(x\in\gamma_{\mathrm{ext}}\) and \(y\in\gamma_{\mathrm{int}}\) with \(x,y\notin I_{i}\cup\left(\varepsilon_{\pi}J_{k-\varepsilon_{\pi}i}\right)\)) must be in \(S_{\mathrm{ann-nc}}\left(\gamma_{\pi}\right)\).

Any restriction of \(\lambda_{x,y}\) to \(I_{k}\) or \(\varepsilon_{\pi}J_{l}\) is equal to \(\gamma_{\pi}\) restricted to the same domain.  Any \(\delta_{\varepsilon}\pi\delta_{\varepsilon}\) satisfying the disc-standard condition on \(\lambda_{x,y}\) will then satisfy the first annular-standard condition.

If \(a\), \(b\), \(c\) and \(d\) share a cycle of \(\delta_{\varepsilon}\pi\delta_{\varepsilon}\), with \(a\) and \(b\) in one cycle of \(\gamma_{\pi}\) and \(c\) and \(d\) in the other, then \(a\) and \(b\) must be encountered in some order in \(\lambda_{x,y}\) before any elements of the cycle of \(\gamma_{\pi}\) containing \(c\) and \(d\), so either \(\left(a,b,c\right)\) or \(\left(a,d,b\right)\) is disc nonstandard on \(\lambda_{x,y}\).  Thus \(\delta_{\varepsilon}\pi\delta_{\varepsilon}\) must satisfy the second annular-standard condition.

The elements of any spoke in one of the cycles of \(\gamma_{\pi}\) comprise an interval of that cycle, so if \(\left.\gamma_{\pi}\right|_{\left\{a,b,c,d\right\}}=\left(a,b,c,d\right)\) and \(\left.\delta_{\varepsilon}\pi\delta_{\varepsilon}\right|_{\left\{a,b,c,d\right\}}=\left(a,c\right)\left(b,d\right)\), then one of \(b\) and \(d\) (and hence the other) must be in the same spoke as \(a\) and \(c\).  The elements of this spoke must be disc-crossing relative to \(\lambda_{x,y}\), so \(\delta_{\varepsilon}\pi\delta_{\varepsilon}\) must satisfy the first annular-noncrossing condition.

If \(x^{\prime}\) and \(y^{\prime}\) sharing a cycle of \(\delta_{\varepsilon}\pi\delta_{\varepsilon}\) are on different cycles of \(\gamma_{\pi}\), they divide the other elements of their spoke into two intervals in the cycle \(\left.\lambda_{x,y}\right|_{I_{k}\cup\left(\varepsilon_{\pi}J_{l}\right)}\), say \(K_{1}\) and \(K_{2}\).  Since \(\delta_{\varepsilon}\pi\delta_{\varepsilon}\) is disc-noncrossing relative to \(\lambda_{x,y}\), any cycle of \(\delta_{\varepsilon}\pi\delta_{\varepsilon}\) other than the one containing \(x^{\prime}\) and \(y^{\prime}\) must be contained in one of the \(K_{j}\).  We note that each \(K_{j}\) intersects each cycle of \(\gamma_{\pi}\) in an interval not containing \(x\), \(y\), \(x^{\prime}\) or \(y^{\prime}\), so both \(\lambda_{x,y}\) and \(\lambda_{x^{\prime},y^{\prime}}\) induce a permutation on \(K_{i}\) in which elements are mapped under \(\gamma_{\pi}\) within the interval belonging to a cycle of \(\gamma_{\pi}\), then the last element of an interval is mapped to the first of the other.  Thus \(\left.\lambda_{x^{\prime},y^{\prime}}\right|_{K_{j}}=\left.\lambda_{x,y}\right|_{K_{j}}\).  Since \(\delta_{\varepsilon}\pi\delta_{\varepsilon}\) must then be disc standard relative to \(\lambda_{x,y}\) for any \(x\) and \(y\), even those in the same spoke, it must satisfy the second annular-noncrossing condition.

We note that in the cycle of \(\lambda_{x,y}\), where \(x\in I_{i}\) and \(y\in\varepsilon_{\pi}J_{k-\varepsilon_{\pi}i}\), we see elements from the intervals in the cyclic order \(I_{i},I_{i+1},\ldots,I_{i-1},I_{i},J_{k-\varepsilon_{\pi}i},\allowbreak J_{k-\varepsilon_{\pi}\left(i-1\right)},\ldots,J_{k-\varepsilon_{\pi}\left(i+1\right)},J_{k-\varepsilon_{\pi}i}\) (recalling that subscripts are taken modulo their appropriate range).  If there is an instance of the third annular-crossing condition (variables as given in the definition) where \(a\) and \(c\) belong to a different spoke from \(x\) and \(y\), then \(b\) and \(d\) must belong to the same spoke as \(a\) and \(c\): if \(a\) and \(c\) belong to the same interval of their spoke in \(\lambda_{x,y}\), then one of \(b\) or \(d\) must appear in that interval; and if \(a\) and \(c\) belong to the two different intervals, then no two elements of any other spoke may appear in the correct order.  In either case, they are disc-crossing relative to \(\lambda_{x,y}\), so this configuration cannot occur.  If \(a\) and \(c\) belong to the same spoke as \(x\) and \(y\), then they must be contained in one of the \(K_{j}\), as above.  We note that the \(K_{j}\) coincide with the intervals of this spoke in \(\lambda_{x,y}\), so \(b\) and \(d\) must also be contained within this interval.  However, if \(\delta_{\varepsilon}\pi\delta_{\varepsilon}\) is crossing relative to \(\lambda_{x,y}\), it is crossing relative to \(\lambda_{x^{\prime},y^{\prime}}\) for \(x^{\prime}\) and \(y^{\prime}\) in another spoke, so this configuration also cannot occur.

We now rearrange our expression for the asymptotic covariance in terms of the spokes.  Since a \(\pi\) contributing to the asymptotic value of the covariance must connect \(\left[m\right]\) to exactly one of \(\left[m+1,n\right]\) and \(-\left[m+1,n\right]\), \(\pi\) uniquely determines the sign \(\varepsilon_{\pi}\) and the value \(k\) such that each \(I_{i}\) is connected to \(\varepsilon_{\pi}J_{k-\varepsilon_{\pi}i}\) for all \(i\in\left[p\right]\), and thus the asymptotic covariance can be expressed as a sum over all spoke diagrams for each choice of sign and \(k\).

For a given \(\varepsilon_{0}\) and \(k\), the contribution is a sum over terms corresponding to a choice of connected spoke on each \(I_{i}\) and \(\varepsilon_{0}J_{k-\varepsilon_{0}i}\) (where the contributions are multiplied), and as such may be factored into sums of all connected spokes on \(I_{i}\) and \(\varepsilon_{0}J_{k-\varepsilon_{0}i}\) for all \(i\in\left[p\right]\).

A disc-noncrossing permutation on \(I_{k}\) and \(\varepsilon_{0}J_{l}\) which does not connect the two intervals induces disc-noncrossing permutations on each of \(\left.\lambda_{x,y}\right|_{I_{k}}\) and \(\left.\lambda_{x,y}\right|_{\varepsilon_{0}J_{l}}\).  In terms of the expression given in Lemma~\ref{moment}, the sum over those which do connect the two intervals is then
\[\lim_{N\rightarrow\infty}\mathbb{E}\left(\mathrm{tr}\left(A_{k}B_{l}^{\left(\varepsilon_{0}\right)}\right)\right)-\lim_{N\rightarrow\infty}\mathbb{E}\left(\mathrm{tr}\left(A_{k}\right)\right)\lim_{N\rightarrow\infty}\mathbb{E}\left(\mathrm{tr}\left(B_{l}^{\left(\varepsilon_{0}\right)}\right)\right)\textrm{,}\]
where Lemma~\ref{moment} guarantees the existence of this limit if \(v\left(k\right)=w\left(l\right)\) and it is equal to zero for all \(N\) if \(v\left(k\right)\neq w\left(l\right)\).  The result follows.
\end{proof}

\section{The main definitions}
\label{real}

We take the property satisfied in Theorem~\ref{main} as our definition of asymptotic real second-order freeness:
\begin{definition}
Let \(\left\{X_{c}^{\left(\lambda\right)}\right\}_{\lambda\in\Lambda_{c}}\) be an ensemble of random \(N\times N\) matrices for each colour \(c\in\left[C\right]\).  We say that the ensembles are {\em asymptotically real second-order free} if they are asymptotically free, have a second-order limit distribution, and if, for any \(v:\left[p\right]\rightarrow\left[C\right]\) and \(w:\left[q\right]\rightarrow\left[C\right]\) cyclically alternating words (or words of length \(1\)) in the set of colours \(\left[C\right]\) and \(A_{1},\ldots,A_{p}\) and \(B_{1},\ldots,B_{q}\) random matrices with \(A_{k}\) in the algebra generated by the ensemble associated with \(v\left(k\right)\) and \(B_{k}\) in the algebra generated by the ensemble associated with \(w\left(k\right)\), we have for \(p\neq q\),
\[\lim_{N\rightarrow\infty}k_{2}\left(\mathrm{Tr}\left(\mathaccent"7017{A}_{1}\cdots\mathaccent"7017{A}_{p}\right),\mathrm{Tr}\left(\mathaccent"7017{B}_{1}\cdots\mathaccent"7017{B}_{q}\right)\right)=0\]
and for \(p=q\geq 2\),
\begin{multline*}
\lim_{N\rightarrow\infty}k_{2}\left(\mathrm{Tr}\left(\mathaccent"7017{A}_{1}\cdots\mathaccent"7017{A}_{p}\right),\mathrm{Tr}\left(\mathaccent"7017{B}_{1}\cdots\mathaccent"7017{B}_{p}\right)\right)
\\=\sum_{k=0}^{p-1}\prod_{i=1}^{p}\left(\lim_{N\rightarrow\infty}\mathbb{E}\left(\mathrm{tr}\left(A_{i}B_{k-i}\right)\right)-\mathbb{E}\left(\mathrm{tr}\left(A_{i}\right)\right)\mathbb{E}\left(\mathrm{tr}\left(B_{k-i}\right)\right)\right)\\+\sum_{k=0}^{p-1}\prod_{i=1}^{p}\left(\lim_{N\rightarrow\infty}\mathbb{E}\left(\mathrm{tr}\left(A_{i}B_{k+i}^{T}\right)\right)-\mathbb{E}\left(\mathrm{tr}\left(A_{i}\right)\right)\mathbb{E}\left(\mathrm{tr}\left(B_{k+i}^{T}\right)\right)\right)\textrm{.}
\end{multline*}
\end{definition}

Noting that
\[\mathbb{E}\left(\mathrm{tr}\left(A_{k}B_{l}^{\left(\pm 1\right)}\right)\right)-\mathbb{E}\left(\mathrm{tr}\left(A_{k}\right)\right)\mathbb{E}\left(\mathrm{tr}\left(B_{l}^{\left(\pm 1\right)}\right)\right)=\mathbb{E}\left(\mathrm{tr}\left(\mathaccent"7017{A}_{k}\mathaccent"7017{B}_{l}^{\left(\pm 1\right)}\right)\right)\textrm{,}\]
the above condition is equivalent to the following condition on the algebra generated by the second-order limit distributions of the matrices, which we take as our definition of second-order freeness:
\begin{definition}
\label{real second-order freeness}
Let \(A_{1},\ldots,A_{C}\) be subalgebras of \(A\), \(\left(A,\varphi_{1},\varphi_{2}\right)\) a second-order noncommutative probability space equipped with an involution \(a\mapsto a^{t}\) reversing the order of multiplication.  Then \(A_{1},\ldots,A_{C}\) are {\em real second-order free} if they are free and if, for every \(a_{1},\ldots,a_{p},b_{1},\ldots,b_{q}\in A\) such that \(a_{k}\in A_{v\left(k\right)}\) and \(b_{k}\in A_{w\left(k\right)}\) for \(v:\left[p\right]\rightarrow\left[C\right]\) and \(w:\left[q\right]\rightarrow\left[C\right]\) cyclically alternating words (or words of length \(1\)) in \(\left[C\right]\), for \(p\neq q\),
\[\varphi_{2}\left(\mathaccent"7017{a}_{1}\cdots\mathaccent"7017{a}_{p},\mathaccent"7017{b}_{1}\cdots\mathaccent"7017{b}_{q}\right)=0\]
and for \(p=q\geq 2\)
\[\varphi_{2}\left(\mathaccent"7017{a}_{1}\cdots\mathaccent"7017{a}_{p},\mathaccent"7017{b}_{1}\cdots\mathaccent"7017{b}_{p}\right)=\sum_{k=0}^{p-1}\prod_{i=1}^{p}\varphi_{1}\left(\mathaccent"7017{a}_{i}\mathaccent"7017{b}_{k-i}\right)+\sum_{k=0}^{p-1}\prod_{i=1}^{p}\varphi_{1}\left(\mathaccent"7017{a}_{i}\mathaccent"7017{b}_{k+i}^{t}\right)\textrm{.}\]
\end{definition}

\bibliography{paper}

\begin{thebibliography}{10}

\bibitem{MR1990662}
T.~W. Anderson.
\newblock {\em An introduction to multivariate statistical analysis}.
\newblock Wiley Series in Probability and Statistics. Wiley-Interscience [John
  Wiley \& Sons], Hoboken, NJ, third edition, 2003.

\bibitem{MR1475837}
Philippe Biane.
\newblock Some properties of crossings and partitions.
\newblock {\em Discrete Math.}, 175(1-3):41--53, 1997.

\bibitem{MR2439565}
W{\l}odzimierz Bryc.
\newblock Compound real {W}ishart and {$q$}-{W}ishart matrices.
\newblock {\em Int. Math. Res. Not. IMRN}, pages Art. ID rnn 079, 42, 2008.

\bibitem{MR2480549}
W{\l}odzimierz Bryc and Virgil Pierce.
\newblock Duality of real and quaternionic random matrices.
\newblock {\em Electron. J. Probab.}, 14:no. 17, 452--476, 2009.

\bibitem{MR1311922}
Peter~J. Cameron.
\newblock {\em Combinatorics: topics, techniques, algorithms}.
\newblock Cambridge University Press, Cambridge, 1994.

\bibitem{MR2302524}
Beno{\^{\i}}t Collins, James~A. Mingo, Piotr {\'S}niady, and Roland Speicher.
\newblock Second order freeness and fluctuations of random matrices. {III}.
  {H}igher order freeness and free cumulants.
\newblock {\em Doc. Math.}, 12:1--70 (electronic), 2007.

\bibitem{MR0404045}
Robert Cori.
\newblock {\em Un code pour les graphes planaires et ses applications}.
\newblock Soci\'et\'e Math\'ematique de France, Paris, 1975.
\newblock With an English abstract, Ast{\'e}risque, No. 27.

\bibitem{MR1677884}
P.~A. Deift.
\newblock {\em Orthogonal polynomials and random matrices: a
  {R}iemann-{H}ilbert approach}, volume~3 of {\em Courant Lecture Notes in
  Mathematics}.
\newblock New York University Courant Institute of Mathematical Sciences, New
  York, 1999.

\bibitem{MR0173726}
Jean Ginibre.
\newblock Statistical ensembles of complex, quaternion, and real matrices.
\newblock {\em J. Mathematical Phys.}, 6:440--449, 1965.

\bibitem{MR1396978}
I.~P. Goulden and D.~M. Jackson.
\newblock Transitive factorisations into transpositions and holomorphic
  mappings on the sphere.
\newblock {\em Proc. Amer. Math. Soc.}, 125(1):51--60, 1997.

\bibitem{MR2132270}
P.~Graczyk, G.~Letac, and H.~Massam.
\newblock The hyperoctahedral group, symmetric group representations and the
  moments of the real {W}ishart distribution.
\newblock {\em J. Theoret. Probab.}, 18(1):1--42, 2005.

\bibitem{HSS}
Philip~J. Hanlon, Richard~P. Stanley, and John~R. Stembridge.
\newblock {\em Some Cominbatorial Aspects of the Spectra of Normally
  Distributed Random Matrices}, volume 138 of {\em Contemporary mathematics
  (American Mathematical Society)}, pages 151--174.
\newblock American Mathematical Society, Providence, R.I., 1992.

\bibitem{MR1867354}
Allen Hatcher.
\newblock {\em Algebraic topology}.
\newblock Cambridge University Press, Cambridge, 2002.

\bibitem{MR2036721}
Sergei~K. Lando and Alexander~K. Zvonkin.
\newblock {\em Graphs on surfaces and their applications}, volume 141 of {\em
  Encyclopaedia of Mathematical Sciences}.
\newblock Springer-Verlag, Berlin, 2004.
\newblock With an appendix by Don B. Zagier, Low-Dimensional Topology, II.

\bibitem{MR2129906}
Madan~Lal Mehta.
\newblock {\em Random matrices}, volume 142 of {\em Pure and Applied
  Mathematics (Amsterdam)}.
\newblock Elsevier/Academic Press, Amsterdam, third edition, 2004.

\bibitem{MR2052516}
James~A. Mingo and Alexandru Nica.
\newblock Annular noncrossing permutations and partitions, and second-order
  asymptotics for random matrices.
\newblock {\em Int. Math. Res. Not.}, (28):1413--1460, 2004.

\bibitem{MR2294222}
James~A. Mingo, Piotr {\'S}niady, and Roland Speicher.
\newblock Second order freeness and fluctuations of random matrices. {II}.
  {U}nitary random matrices.
\newblock {\em Adv. Math.}, 209(1):212--240, 2007.

\bibitem{MR2216446}
James~A. Mingo and Roland Speicher.
\newblock Second order freeness and fluctuations of random matrices. {I}.
  {G}aussian and {W}ishart matrices and cyclic {F}ock spaces.
\newblock {\em J. Funct. Anal.}, 235(1):226--270, 2006.

\bibitem{MR2266879}
Alexandru Nica and Roland Speicher.
\newblock {\em Lectures on the combinatorics of free probability}, volume 335
  of {\em London Mathematical Society Lecture Note Series}.
\newblock Cambridge University Press, Cambridge, 2006.

\bibitem{MR2851244}
C.~Emily~I. Redelmeier.
\newblock Genus expansion for real {W}ishart matrices.
\newblock {\em J. Theoret. Probab.}, 24(4):1044--1062, 2011.

\bibitem{MR0142470}
W.~T. Tutte.
\newblock A census of slicings.
\newblock {\em Canad. J. Math.}, 14:708--722, 1962.

\bibitem{MR746795}
W.~T. Tutte.
\newblock {\em Graph theory}, volume~21 of {\em Encyclopedia of Mathematics and
  its Applications}.
\newblock Addison-Wesley Publishing Company Advanced Book Program, Reading, MA,
  1984.
\newblock With a foreword by C. St. J. A. Nash-Williams.

\bibitem{MR1094052}
Dan Voiculescu.
\newblock Limit laws for random matrices and free products.
\newblock {\em Invent. Math.}, 104(1):201--220, 1991.

\bibitem{Wishart}
John Wishart.
\newblock The generalised product moment distribution in samples from a normal
  multivariate population.
\newblock {\em Biometrika}, 20A(1/2):32--52, July 1928.

\bibitem{MR1492512}
A.~Zvonkin.
\newblock Matrix integrals and map enumeration: an accessible introduction.
\newblock {\em Math. Comput. Modelling}, 26(8-10):281--304, 1997.
\newblock Combinatorics and physics (Marseilles, 1995).

\end{thebibliography}
\bibliographystyle{plain}

\end{document}